\documentclass[reqno,11pt,a4paper, english]{amsart}

\usepackage{amsmath} 
\usepackage{amsthm} 
\usepackage{amssymb} 
\usepackage[dvipsnames]{xcolor}
\usepackage{amscd} 
\usepackage{mathtools}
\usepackage[notcite,notref]{}
\usepackage[ddmmyyyy,hhmmss]{datetime}
\usepackage[colorinlistoftodos,bordercolor=orange,backgroundcolor=orange!20,linecolor=orange,textsize=scriptsize]{todonotes}

\usepackage{subcaption}

\usepackage{array} 
\usepackage{newtxtext}
\usepackage[varvw]{newtxmath}
\usepackage[cal=boondoxo,scr=euler]{mathalfa}
\usepackage[linktocpage]{hyperref} 
\usepackage{cleveref} 
\usepackage{caption} %
\usepackage{graphics,graphicx} 
\usepackage{tikz,tikz-cd} 

\usetikzlibrary{fit,matrix,graphs,graphs.standard,calc,shapes.geometric, arrows}
\usetikzlibrary{decorations.pathreplacing,}

\usetikzlibrary{automata}
\usepackage[shortlabels]{enumitem} 

\DeclareMathAlphabet{\mathsf}{OT1}{\sfdefault}{m}{n}

\newcommand{\nocontentsline}[3]{}
\newcommand{\tocless}[2]{\bgroup\let\addcontentsline=\nocontentsline#1{#2}\egroup}

\usepackage[margin=1.25in]{geometry}
\usepackage{verbatim}

\usepackage{scalerel}

\usepackage{multirow}

\makeatletter
\def\dual#1{\expandafter\dual@aux#1\@nil}
\def\dual@aux#1/#2\@nil{\begin{tabular}{@{}c@{}}#1\\#2\end{tabular}}
\makeatother

\makeatletter
\@namedef{subjclassname@2020}{\textup{2020} Mathematics Subject Classification}
\makeatother

\DeclareMathAlphabet{\amathbb}{U}{bbold}{m}{n}

\hypersetup{
    colorlinks = true,
    linkbordercolor = {white},
    linkcolor = {BrickRed},
    anchorcolor = {black},
    citecolor = {BrickRed},
    filecolor = {cyan},
    menucolor = {BrickRed},
    runcolor = {cyan},
    urlcolor = {black}
}

\tikzstyle{rectan} = [rectangle, rounded corners, 
minimum width=1.5cm, 
minimum height=0.75cm,
text width=3cm,
text centered, 
draw=black,
font = \footnotesize,
]

\tikzstyle{ghost} = [circle, 
minimum width=1pt, 
minimum height=1pt,
text width=1pt,
text centered, 
draw=black,
font = \footnotesize,
]

\newtheoremstyle{teoremas}
{11pt}
{11pt}
{\itshape}
{}
{\bfseries}
{}
{.5em}
{}

\theoremstyle{teoremas}
\newtheorem{theorem}{Theorem}[section]
\newtheorem{corollary}[theorem]{Corollary}
\newtheorem{lemma}[theorem]{Lemma}
\newtheorem{proposition}[theorem]{Proposition}

\newtheoremstyle{definition}
{11pt}
{11pt}
{}
{}
{\bfseries}
{}
{.5em}
{}

\theoremstyle{definition}
\newtheorem{definition}[theorem]{Definition}
\newtheorem{conjecture}[theorem]{Conjecture}

\newtheorem{question}[theorem]{Question}
\newtheorem{example}[theorem]{Example}
\newtheorem{remark}[theorem]{Remark}

\crefname{theorem}{theorem}{theorems}
\Crefname{theorem}{Theorem}{Theorems}
\crefname{lemma}{lemma}{lemmas}
\Crefname{lemma}{Lemma}{Lemmas}
\crefname{proposition}{proposition}{propositions}
\Crefname{proposition}{Proposition}{Propositions}

\DeclareMathOperator{\rk}{rk}

\DeclareMathOperator{\Poin}{Poin}

\newcommand{\M}{\mathsf{M}}
\newcommand{\N}{\mathsf{N}}
\newcommand{\U}{\mathsf{U}}

\newcommand{\Q}{\mathbb{Q}}
\newcommand{\sgn}{\operatorname{sgn}}
\newcommand{\trunc}{\operatorname{trunc}}

\newcommand{\Z}{\mathbb{Z}}
\newcommand{\cC}{\mathcal{C}}

\newcommand{\Hilb}{\operatorname{Hilb}}
\newcommand{\rank}{\operatorname{rk}}

\renewcommand{\H}{\mathrm{H}}
\newcommand{\F}{F} 
\newcommand{\G}{G} %
\newcommand{\CH}{\mathrm{CH}}
\newcommand{\aug}{\operatorname{aug}}
\newcommand{\Int}{\operatorname{Int}}

\newcommand{\uCH}{\underline{\mathrm{CH}}}

\newcommand{\cL}{\mathcal{L}}

\newcommand{\Asc}{\operatorname{Asc}}

\newcommand{\des}{\operatorname{des}}
\newcommand{\Des}{\operatorname{Des}}
\newcommand{\rev}{\operatorname{rev}}
\renewcommand{\aug}{\operatorname{aug}}
\newcommand{\cF}{\mathcal{F}}
\newcommand{\cG}{\mathcal{G}}

\newcommand{\zero}{\widehat{0}}
\newcommand{\one}{\widehat{1}}
\newcommand{\aaa}{\mathbf{a}}
\newcommand{\bbb}{\mathbf{b}}
\newcommand{\wt}{{\sf wt}}

\DeclareMathOperator{\exapsi}{ex\ \aaa\Psi} 
\DeclareMathOperator{\expsitilde}{ex \widetilde{\Psi}} 
\DeclareMathOperator{\exapsib}{ex\ \aaa\Psi\bbb} 
\DeclareMathOperator{\expsib}{ex \Psi\bbb} 

\AtBeginDocument{%
   \def\MR#1{}
}

\title[Dual Chow polynomials]{Dual Chow polynomials of matroids and posets}

\author[G. Caiolo]{Giovanni Caiolo}
\address{(G. Caiolo)
 Scuola Normale Superiore di Pisa, Pisa, Italy
}
\email{giovanni.caiolo@sns.it}

\author[L. Ferroni]{Luis Ferroni}
\address{(L. Ferroni)
   Dipartimento di Matematica, Universit\`a di Pisa, Pisa, Italy
}
\email{luis.ferroni@unipi.it}

\author[E. Hoster]{Elena Hoster}
\address[E.~Hoster]{Fakultät für Mathematik, Ruhr-Universität Bochum, Germany}
\email{elena.hoster@rub.de}

\thanks{}

\subjclass[2020]{Primary: 06A07, 05B35, 52B05, 05A20}

\allowdisplaybreaks

\begin{document}

\begin{abstract}
 We introduce and study \emph{dual} Chow functions associated to kernels in incidence algebras of weakly ranked posets. Given a kernel $\kappa$, its dual Chow function is defined as the Chow function associated to the sign-twisted reverse kernel. For kernels satisfying a natural skew-symmetry condition, such as the Eulerian kernel of an Eulerian poset or the kernel given by $R$-polynomials on Bruhat intervals, this construction recovers the ordinary Chow function. In contrast, when this skew-symmetry fails, the dual Chow function gives a genuinely different invariant. 
 
 The main example considered in this paper is the dual Chow function associated to the characteristic function. We develop the basic theory of these dual Chow functions, with particular emphasis on posets arising from matroids. We prove chain formulas, unimodality and $\gamma$-positivity results, formulas under standard poset operations, and deletion formulas for matroids.
 Along the way, we also obtain a general deletion formula for the $\aaa\bbb$-index of matroids, which leads to new formulas for extended $\aaa\bbb$-indices and, in turn, specializes to several deletion formulas appearing in the literature.
\end{abstract}

\date{\today~at \currenttime}

\maketitle


\section{Introduction}\label{sec:one}

The Kazhdan--Lusztig--Stanley theory of kernels in incidence algebras provides a common framework for several important polynomial invariants attached to posets. In its original form, going back to Stanley \cite{stanley-local} and Brenti \cite{brenti}, this framework associates to a kernel $\kappa$ two KLS functions, usually called the right and left KLS functions. These functions include, for instance, the Kazhdan--Lusztig polynomials of Bruhat intervals and, more recently, the Kazhdan--Lusztig polynomials of matroids. For a detailed exposition, we refer to Proudfoot's survey \cite{proudfoot-kls}.

Ferroni, Matherne, and Vecchi \cite{ferroni-matherne-vecchi} introduced another invariant associated to a kernel, which they called the \emph{Chow function}. In the special case of the characteristic kernel
\[
    \chi=\mu\cdot\zeta^{\rev},
\]
their construction gives a characteristic Chow polynomial for every weakly ranked poset. When the poset is the lattice of flats of a matroid $\M$, this polynomial agrees with the Hilbert--Poincar\'e series of the Chow ring of $\M$; see \cite{feichtner-yuzvinsky,semismall}. Thus the Chow function of a kernel should be thought of as a common generalization of several cohomological Hilbert series arising from combinatorics and geometry.

The goal of the present paper is to study a dual version of this construction. If $\kappa$ is a $(P,\rho)$-kernel, then its reverse $\kappa^{\rev}$ is again a kernel. However, for the characteristic kernel, this naive reversal introduces signs in a systematic way. For this reason, we define the \emph{dual Chow function} of $\kappa$ to be the Chow function associated to the sign-twisted reverse kernel
\[
    (\kappa^{\rev})^{\sgn},
    \qquad
    \bigl(a^{\sgn}\bigr)_{st}=(-1)^{\rho_{st}}a_{st}.
\]
This convention has the effect of building the natural signs into the definition. In particular, for geometric lattices and, more generally, for Cohen--Macaulay posets, the dual Chow polynomial has nonnegative coefficients.

This definition is harmless in the classical self-dual situations. For two prominent examples, the Eulerian kernel of an Eulerian poset and the kernel of $R$-polynomials on Bruhat order satisfy the \emph{skew-symmetry} relation
\[
    \kappa_{st}(x)=(-1)^{\rho_{st}}x^{\rho_{st}}\kappa_{st}(x^{-1}).
\]
Equivalently, $\kappa^{\rev}=\kappa^{\sgn}$, and hence $(\kappa^{\rev})^{\sgn}=\kappa$. Thus for such kernels the dual Chow function coincides with the ordinary Chow function. The characteristic kernel behaves differently. In that case, the dual Chow function is genuinely new, and this is one of the main subjects of study in the present work.

Our first main results concern arbitrary weakly ranked posets. We prove a chain formula for the dual Chow polynomial, analogous to the Feichtner--Yuzvinsky formula for ordinary Chow polynomials. 

\begin{theorem}[{Theorem~\ref{thm:iterative}}]\label{thm:main1}
    The dual Chow polynomial of a weakly ranked bounded poset $(P,\rho)$ of rank $r$ can be computed as follows:
    \[
        \H^*_P(x)
        =
        (-1)^r
        \sum_{\widehat{0}\leq c_0 < c_1 < \cdots < c_m = \widehat{1}}
        \mu_{\widehat{0}c_0}
        \prod_{i=1}^m
        \mu_{c_{i-1}c_i}
        \frac{x^{\rho_{c_{i-1}c_i}}-x}{x-1}.
    \]
\end{theorem}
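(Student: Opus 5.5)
The plan is to start from the chain (geometric series) expansion of the Chow function of an arbitrary kernel given earlier in the paper, following Ferroni--Matherne--Vecchi. That expansion writes $\H$ as the inverse of $\delta$ minus a "reduced kernel" $\bar\kappa$, where $\bar\kappa_{st}$ is an explicit rational expression in $\kappa_{st}(x)$ with denominator $x-1$ and $\bar\kappa_{ss}=0$. Expanding this inverse as a geometric series gives $\H_{st}$ as a sum over strict chains $s=c_0<\cdots<c_m=t$ of the products $\prod_i \bar\kappa_{c_{i-1}c_i}$. I will apply this expansion to the kernel $\kappa'=(\chi^{\rev})^{\sgn}$ on the interval $[\zero,\one]$.

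The first step is to compute $\kappa'$ explicitly. Reversal is an anti-multiplicative twist, $(ab)^{\rev}=a^{\rev}b^{\rev}$ with $(a^{\rev})_{st}(x)=x^{\rho_{st}}a_{st}(x^{-1})$. The sign twist is multiplicative, since $\rho$ is additive along intervals. Using these two facts:
\[
(\chi^{\rev})^{\sgn} = (\mu^{\rev})^{\sgn}\,\zeta^{\sgn},
\qquad
\kappa'_{st}(x) = (-1)^{\rho_{st}}\sum_{s\le u\le t}\mu_{su}\,x^{\rho_{su}},
\]
because $\mu^{\rev}_{su}=\mu_{su}x^{\rho_{su}}$. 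Substituting this into $\bar\kappa'$ shows that each entry $\bar\kappa'_{st}$ is a signed sum over $u\in[s,t]$ of $\mu_{su}$ times a simple rational function of $x^{\rho_{su}}$.

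Next I expand the chain sum. Each step $c_{i-1}<c_i$ acquires an intermediate element $u_i$ with weight $\mu_{c_{i-1}u_i}$, so the total expansion runs over multichains $c_0\le u_1\le c_1\le u_2\le\cdots$. The signs $(-1)^{\rho}$ multiply along the chain to the global factor $(-1)^r$. I will then regroup by the "Möbius-carrying" elements. For fixed $u_i$, summing over the choices of the intervening $c$'s should collapse through $\sum_{u\le c\le v}\mu_{uc}=\delta_{uv}$. The terms in which $u_i=c_{i-1}$ contribute only $x$-power parts, and the $x^{\rho_{su}}$ parts recombine into the geometric sums $\frac{x^{\rho}-x}{x-1}=x+\cdots+x^{\rho-1}$. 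The initial segment $\zero\le c_0$, which is weighted by $\mu_{\zero c_0}$ without any $x$-factor, arises from the first block, where the expansion of $\bar\kappa'$ leaves a bare Möbius value.

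The main obstacle is this regrouping and telescoping: pairing terms so that Möbius inversion cancels everything except chains with the stated weights, and checking the boundary terms at $\zero$ and $\one$ exactly. My first attempt will be to phrase the computation at the level of the incidence algebra rather than chains. Concretely, I would write $\bar\kappa' = \mu\cdot D$ for an explicit $D$ with $D_{st}=\frac{x^{\rho_{st}}-x}{x-1}$ off the diagonal. Then $(\delta-\bar\kappa')^{-1}$ can be manipulated algebraically using $\mu\zeta=\delta$, and only at the end expanded into chains, which yields the displayed formula directly. As a sanity check, I would verify the result on the Boolean lattice of rank $2$ and on chains.
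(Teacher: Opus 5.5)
Your set-up is correct: $\H^*=(\delta-\bar\kappa')^{-1}$ with $\bar\kappa'_{st}=\kappa'_{st}/(x-1)$, and $\kappa'_{st}=(-1)^{\rho_{st}}\sum_{s\le u\le t}\mu_{su}x^{\rho_{su}}$. The gap is that the step producing the M\"obius-weighted chain formula is never carried out, and the one concrete identity you propose for it is false. With $\cdot$ meaning convolution, as throughout the paper,
\[
(\mu\cdot D)_{st}=\sum_{s\le u\le t}\mu_{su}\,\frac{x^{\rho_{ut}}-x}{x-1}=\frac{\chi_{st}(x)}{x-1}\qquad(s<t),
\]
because the $x$-weight lands on $[u,t]$ rather than on $[s,u]$. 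So $\mu\cdot D$ (with $D_{ss}=-1$) is $\overline{\chi}$, the reduced \emph{ordinary} characteristic kernel. Inverting it, with or without a sign twist, gives the Feichtner--Yuzvinsky chain formula for $\H$, which has no M\"obius factors at all.

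Your chain-level sketch does not settle this either. In $\kappa'$ the factor $\mu_{c_{i-1}u_i}$ and the weight $x^{\rho_{c_{i-1}u_i}}$ sit on the same interval. So re-summing over the $c_i$ amounts to inverting a $\zeta$, not to a direct use of $\sum_{u\le c\le v}\mu_{uc}=\delta_{uv}$. The prefactor $\mu_{\zero c_0}$ comes from that inversion, not from ``a bare M\"obius value in the first block''.

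The missing idea is to factor through an entrywise product followed by $\zeta$ on the right. Since $\sum_{s\le u\le t}\mu_{su}=0$ for $s<t$, you may replace $x^{\rho_{su}}$ by $x^{\rho_{su}}-x$. Put $D_{st}=\frac{x^{\rho_{st}}-x}{x-1}$ for all $s\le t$ (so $D_{ss}=-1$) and $(\mu\circ D)_{st}=\mu_{st}D_{st}$. Then
\[
\delta-\bar\kappa'=-(\mu\circ D)^{\sgn}\,\zeta^{\sgn},
\qquad\text{hence}\qquad
\H^*=\mu^{\sgn}\,\bigl(-(\mu\circ D)^{\sgn}\bigr)^{-1}.
\]
Lemma~\ref{lem:inverses-as-sums-over-chains} expands the second factor over chains $c_0<\cdots<c_m=t$ with step weights $(-1)^{\rho_{c_{i-1}c_i}}\mu_{c_{i-1}c_i}\frac{x^{\rho_{c_{i-1}c_i}}-x}{x-1}$. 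The left factor $\mu^{\sgn}=(\zeta^{\sgn})^{-1}$ contributes $(-1)^{\rho_{sc_0}}\mu_{sc_0}$, and the signs add up to $(-1)^{\rho_{st}}$.

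With this repair your argument is a self-contained alternative to the paper's. The paper instead identifies the right KLS function $f^*=\mu^{\sgn}$ (Proposition~\ref{prop:kls-and-zeta-of-rev-kernel}) and substitutes it into the general chain formula of \cite{ferroni-matherne-vecchi} (eq.~(17)). The factorization above is exactly $\kappa'=(f^*)^{\rev}(f^*)^{-1}$ written out for this $f^*$.
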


The sign $(-1)^r$ appears because the signs contributed by all intervals in a chain add up to the rank of the ambient interval. This formula immediately suggests a positivity statement under certain M\"obius sign conditions. We prove a more general criterion: if the sign-twist of either the right or the left KLS function of a kernel has nonnegative coefficients on every interval, then the sign-twist of its Chow function has nonnegative and unimodal coefficients; see Theorem~\ref{thm:sign-twist-unimodality}. Applying this to the dual characteristic kernel gives the following unimodality result.

\begin{theorem}[{Theorem~\ref{thm:mobius-sign-unimodality}}]\label{thm:main2}
    Let $(P,\rho)$ be a weakly ranked poset such that for every interval $[s,t]$ one has $(-1)^{\rho_{st}}\mu_{st}\geq 0$. Then, for every interval $[s,t]$, the polynomial $\H^*_{st}(x)$ has nonnegative and unimodal coefficients.
\end{theorem}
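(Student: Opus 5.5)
The plan is to derive the statement from the general criterion mentioned just before it, Theorem~\ref{thm:sign-twist-unimodality}: if the sign-twist of either the right or the left KLS function of a kernel has nonnegative coefficients on every interval, then the sign-twist of its Chow function has nonnegative, unimodal coefficients. I will apply this to the dual characteristic kernel $\kappa=(\chi^{\rev})^{\sgn}$, whose Chow function is $\H^*$ by definition. Two things then need to be checked.

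\textbf{Step 1: sign conventions.} The criterion concludes positivity for $(\mathrm{H}_\kappa)^{\sgn}$, that is, for $(-1)^{\rho_{st}}$ times the Chow function of $\kappa$ on $[s,t]$. I must match this with the normalization of $\H^*_{st}$. The chain formula of Theorem~\ref{thm:main1} carries the global factor $(-1)^{r}$, and the convention was chosen so that $\H^*$ is nonnegative for Cohen--Macaulay posets. So the intended reading is that $\H^*_{st}$ already includes this sign. In other words, either $\H^*_{st}$ is literally the sign-twist of the Chow function of $\kappa$, or the criterion should be applied to $\kappa^{\sgn}=\chi^{\rev}$; I will settle this by comparing with Theorem~\ref{thm:main1}.

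\textbf{Step 2: the KLS functions of $\kappa$.} For $\chi=\mu\zeta^{\rev}$ the known answer is: the right KLS function is $\zeta$ and the left one is determined by $\mu$ (for $\chi$ one has $\chi\cdot\zeta=\zeta^{\rev}$, since $\mu\zeta=\delta$). Reversal swaps the roles, and the sign-twist is an algebra automorphism commuting with reversal, because $\rho_{su}+\rho_{ut}=\rho_{st}$. Hence the kernel identity for $\chi$ transforms into an identity for $\kappa$, and one of its KLS functions is the degree-zero function $\mu$ up to twist.

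Concretely, I would look for a degree-zero $f$ with $\kappa f=f^{\rev}$. Since $f$ has degree zero, $f^{\rev}$ is just $x^{\rho}f$. Solving this gives $f=\mu^{\sgn}$ or a close variant of it, and it satisfies the degree bound $\deg f_{st}<\rho_{st}/2$ on nontrivial intervals, being constant. Uniqueness of KLS functions then identifies $f$ as the relevant right (or left) KLS function of $\kappa$. Its sign-twist is $(-1)^{\rho_{st}}\mu^{\sgn}_{st}$, which up to the overall sign fixed in Step 1 equals $(-1)^{\rho_{st}}\mu_{st}\ge 0$ by hypothesis. Applying Theorem~\ref{thm:sign-twist-unimodality} then yields nonnegativity and unimodality of $\H^*_{st}$ for every interval.

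\textbf{Main obstacle.} The work lies in the bookkeeping of twists and reversals. I need to verify exactly which of $\mu$, $\mu^{\sgn}$ (and $\mu^{\rev}$, which coincides with $\mu$ up to a power of $x$) is the KLS function of $(\chi^{\rev})^{\sgn}$, and which sign-twist the criterion requires. A mismatch would produce $\mu_{st}\ge0$ instead of $(-1)^{\rho_{st}}\mu_{st}\ge0$.

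As a sanity check I would test the Boolean lattice, where $(-1)^{\rho}\mu=1$. There the identity $\kappa\mu^{\sgn}=(\mu^{\sgn})^{\rev}$ can be verified directly from $\chi_{st}(x)=\sum_{u}\mu_{su}x^{\rho_{ut}}$.
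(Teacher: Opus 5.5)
Your ingredients are the paper's: $\H^*$ is by definition the Chow function of $\kappa=(\chi^{\rev})^{\sgn}$, and $\mu^{\sgn}$ is its right KLS function. The gap is in the step where you feed this $\kappa$ into Theorem~\ref{thm:sign-twist-unimodality}; that step fails, and no ``overall sign'' repairs it.

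\textbf{Why it fails.} That theorem needs the \emph{twisted} KLS function $f^{\sgn}$ to be nonnegative, and its conclusion concerns $\H_{\kappa}^{\sgn}$.
\begin{itemize}
\item With $f=\mu^{\sgn}$ you get $f^{\sgn}_{st}=(-1)^{\rho_{st}}(-1)^{\rho_{st}}\mu_{st}=\mu_{st}$, which equals $-1$ on every cover. So the hypothesis fails.
\item Even if it held, the conclusion would be about $(\H^*)^{\sgn}$, not $\H^*$.
\item The twist depends on the interval, so there is no single global sign to fix in Step~1.
\end{itemize}
This is exactly the mismatch you listed as the main obstacle, and it does occur on the route you wrote down.

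The first alternative in your Step~1 is also not available. $\H^*$ is literally the Chow function of $\kappa$, not a twist of it. The factor $(-1)^{\rho_{st}}$ in Theorem~\ref{thm:iterative} is just the product of the signs of the entries of $\mu^{\sgn}$ along a chain.

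\textbf{The repair.} Drop the twist and apply the ordinary criterion, Theorem~\ref{thm:unimodal-nonneg}, directly to $(\chi^{\rev})^{\sgn}$. Its right KLS function $\mu^{\sgn}$ has constant entries $(-1)^{\rho_{st}}\mu_{st}\geq 0$. Hence $\H^*$ is nonnegative and unimodal on every interval. This is precisely the paper's proof, via Corollary~\ref{cor:dual-sign-twist-unimodality}.

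Your second alternative also works. Apply Theorem~\ref{thm:sign-twist-unimodality} to $\chi^{\rev}$ instead:
\begin{itemize}
\item its right KLS function is $\mu$, since $\chi^{\rev}\mu=\mu^{\rev}\zeta\mu=\mu^{\rev}$;
\item its sign-twisted Chow function is $\H^*$, because $\overline{\kappa^{\sgn}}=(\overline{\kappa})^{\sgn}$.
\end{itemize}

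\textbf{Smaller slips in Step~2.} Neither affects the repaired argument.
\begin{itemize}
\item For $\chi=\zeta^{-1}\zeta^{\rev}$ it is the \emph{left} KLS function that equals $\zeta$. The right one is the hard one.
\item The valid identity is $\zeta\chi=\zeta^{\rev}$. The identity $\chi\zeta=\zeta^{\rev}$ would force $\zeta$ and $\zeta^{\rev}$ to commute, and it already fails for $\U_{3,4}$.
\end{itemize}
The identity $(\chi^{\rev})^{\sgn}\mu^{\sgn}=(\mu^{\rev}\zeta\mu)^{\sgn}=(\mu^{\rev})^{\sgn}$ holds for every poset. Since $\mu^{\sgn}$ is constant on each interval with diagonal entries $1$, uniqueness gives $f^*=\mu^{\sgn}$ directly. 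No ``close variant'' hedging or Boolean sanity check is needed.
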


We also relate dual Chow polynomials to the extended $\aaa\bbb$-index \cite{dorpalenbarry-maglione-stump}. More precisely, after interchanging the roles of $\aaa$ and $\bbb$ in the usual specialization, one obtains the following.

\begin{theorem}[{Theorem~\ref{thm:expsi-to-dualchow}}]\label{thm:main3}
    Let $(P,\rho)$ be a bounded poset of rank $r$. Then
    \[
        \H^*_P(x)=(1-x)^{-r} \expsitilde_P(-x,x,1) \,.
    \]
\end{theorem}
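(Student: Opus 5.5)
The plan is to compare both sides as sums over chains and match them interval by interval, using the chain formula of Theorem~\ref{thm:main1} (Theorem~\ref{thm:iterative}) for the left-hand side. First I will write out the definition of $\expsitilde_P(\aaa,\bbb,c)$ from \cite{dorpalenbarry-maglione-stump}, with the roles of $\aaa$ and $\bbb$ interchanged, as a sum over chains
\[
\widehat{0}\le c_0<c_1<\cdots<c_m=\widehat{1}
\]
with Möbius weights. I expect that, after expanding the products of Möbius functions through Philip Hall's theorem, its summand has the form
\[
\mu_{\widehat{0}c_0}\prod_{i=1}^m \mu_{c_{i-1}c_i}
\]
times a monomial (or a short sum of monomials) in $\aaa,\bbb,c$. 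In this monomial, the block of rank positions between $\rho(c_{i-1})$ and $\rho(c_i)$ contributes a word depending only on $k=\rho_{c_{i-1}c_i}$, and the bottom segment $[\widehat{0},c_0]$ contributes its own word. If the definition is instead given through flag $f$- or $h$-vectors weighted by $\mu$, I will first rewrite it into this chain form.

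Next I will specialize at $\aaa=-x$, $\bbb=x$, $c=1$ and compute the resulting contribution of a single block of rank $k$. It should be an explicit polynomial $w_k(x)$ that sums a geometric-type series over the positions inside the block. The target identity is
\[
(1-x)^{-k}\,w_k(x)=(-1)^k\,\frac{x^k-x}{x-1}
\]
(up to a global normalization), together with a matching statement for the initial segment, whose contribution should reduce to the sign $(-1)^{\rho(c_0)}$. Since the ranks of the blocks add up to $r$, the factors $(1-x)^{-k}$ multiply to $(1-x)^{-r}$, and the signs $(-1)^k$ multiply to the global $(-1)^r$ in Theorem~\ref{thm:main1}. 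Summing over chains then gives the statement.

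I expect the main obstacle to be the bookkeeping of this per-block identity. Two points need care: the convention for the first element $c_0$, which may equal $\widehat{0}$, and the fact that a block of rank $1$ must contribute $0$, since $x^1-x=0$. Both depend on how the extended $\aaa\bbb$-index treats consecutive chain elements and the variable $c$.

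If the direct chain-by-chain matching does not line up, I will fall back on a generating-function argument. Both sides can be written as $\widehat{0}\widehat{1}$-entries of products in the incidence algebra, namely $\mu$ times a geometric series in $\mu\cdot f$, where $f_{st}$ is the block weight. It then suffices to show that the two block-weight functions agree as elements of the incidence algebra after the rescaling $(1-x)^{-\rho}$, which is multiplicative on chains. This reduces the theorem to a single-variable identity for each rank $k$, which can be checked by comparing coefficients or via the telescoping relation $\frac{x^k-x}{x-1}=x+x^2+\cdots+x^{k-1}$.
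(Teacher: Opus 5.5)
There is a genuine gap at the central step. Neither the chain-by-chain matching with Theorem~\ref{thm:main1} nor your fallback can work: the two chain expansions are not related by a per-rank scalar identity.

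In the paper, $\expsitilde_P(y,\aaa,\bbb)$ has the Poincar\'e variable $y$ as its first argument, and the specialization is $y=-x$, $\aaa=x$, $\bbb=1$. The definition itself is unchanged; only the specialization swaps $\aaa,\bbb$ relative to Stump's. By Theorem~\ref{thm:expsi-via-poincare} it is a sum over chains $\widehat{0}=c_0<c_1<\cdots<c_m=\widehat{1}$ that always start at $\widehat{0}$. Since $\Poin_{st}(-x)=\chi^{\rev}_{st}(x)=\sum_{s\le u\le t}\mu_{su}x^{\rho_{su}}$, each block $[c_{i-1},c_i]$ contributes $\chi^{\rev}_{c_{i-1}c_i}(x)\,(x-1)^{\rho_{c_{i-1}c_i}-1}$. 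This has three consequences:
\begin{enumerate}[(i)]
\item The block weight depends on the M\"obius values of all lower subintervals, not just on $\mu_{c_{i-1}c_i}$ and $k$.
\item There is no initial segment $[\widehat{0},c_0]$ carrying a factor $\mu_{\widehat{0}c_0}$.
\item A rank-one block contributes $1-x$, not $0$.
\end{enumerate}

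Already in rank $2$ with $n$ atoms, the chain $\{\widehat{0}\}$ contributes $-(1-x)^2(1-(n-1)x)$ and each chain $\{\widehat{0},a\}$ contributes $(1-x)^2$. The nonzero terms of Theorem~\ref{thm:main1} are instead $n-1$ (for $c_0=\widehat{1}$) and $(n-1)x$ (for $c_0=\widehat{0}<c_1=\widehat{1}$). Only the totals agree. The two expansions are related by the convolution identity
\[
(\delta-D)\,\upzeta^{\sgn}=-\overline{(\chi^{\rev})^{\sgn}},\qquad D_{st}=(-1)^{\rho_{st}}\mu_{st}\,\frac{x^{\rho_{st}}-x}{x-1}\ \ (s<t),\quad D_{ss}=0,
\]
which uses $\sum_{s\le u\le t}\mu_{su}=0$. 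This is an honest resummation in the incidence algebra, not a pointwise identity of block weights $w_k$.

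The missing idea is to skip Theorem~\ref{thm:main1} and expand $\H^*$ directly from its definition. The element $-\overline{(\chi^{\rev})^{\sgn}}$ has diagonal entries $1$ and $\H^*=\bigl(-\overline{(\chi^{\rev})^{\sgn}}\bigr)^{-1}$, so Lemma~\ref{lem:inverses-as-sums-over-chains} gives
\[
\H^*_{st}(x)=\sum_{s=c_0<c_1<\cdots<c_m=t}(x-1)^{-m}(-1)^{\rho_{st}}\prod_{i=1}^m\chi^{\rev}_{c_{i-1}c_i}(x).
\]
In the specialized $\expsitilde_{st}$, the same chain has Poincar\'e factor $\prod_i\chi^{\rev}_{c_{i-1}c_i}(x)$. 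Its word is $(x-1)^{\rho_{st}-m}$: the $m-1$ ranks $\rho_{sc_1},\dots,\rho_{sc_{m-1}}$ carry $\bbb=1$, and the remaining $\rho_{st}-m$ positions carry $\aaa-\bbb=x-1$. The two sums then agree term by term up to the factor $(x-1)^{\rho_{st}}(-1)^{\rho_{st}}=(1-x)^{\rho_{st}}$. This is exactly the paper's proof, and it needs no M\"obius-function resummation at all.
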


These identities lead to explicit $\gamma$-expansions in terms of the flag $h$-vector.

\begin{theorem}[{Theorem~\ref{thm:gamma exp}}]\label{thm:main4}
    Let $P$ be a bounded weakly ranked poset of rank $r$. Then
    \[
        \H^*_P(x)
        =
        \sum_{\substack{S\subseteq[r-2]\\ S\text{ stable}}}
        \beta_P\bigl( [r-1] \setminus S \bigr)\,
        x^{|S|}(1+x)^{r-1-2|S|} \,.
    \]
    In particular, if $P$ has nonnegative flag $h$-vector, then $\H^*_P(x)$ is $\gamma$-positive. This applies, in particular, to Cohen--Macaulay posets.
\end{theorem}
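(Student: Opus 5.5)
Our plan is to start from the chain formula of Theorem~\ref{thm:main1}, rewrite it as a sum over subsets of ranks weighted by the flag $f$-vector, and then convert to the flag $h$-vector. The alternative route through Theorem~\ref{thm:main3} would go via the known $\gamma$-type expansions of the $\aaa\bbb$-index; we only keep it as a cross-check.

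\textbf{Step 1: reduce to a subset sum.} Group the chains $\widehat{0}\le c_0<\cdots<c_m=\widehat{1}$ by their rank set. Write $\mu_{c_{i-1}c_i}$ as an alternating sum over chains inside $[c_{i-1},c_i]$, together with $\mu_{\widehat{0}c_0}$. Then $(-1)^r\H^*_P(x)$ becomes a sum over pairs $T\subseteq S\subseteq[r-1]$. Here $S$ records all ranks appearing in refined chains, weighted by $(-1)^{|S|+1}f_P(S)$, and $T\subseteq S$ marks the chosen chain elements $c_i$. Each marked segment contributes $\frac{x^{d}-x}{x-1}=x+x^2+\cdots+x^{d-1}$, where $d$ is the rank gap between consecutive marked elements. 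The inner sum over $T$ depends only on the composition of $r$ determined by $S$, so we can precompute it.

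\textbf{Step 2: pass to the flag $h$-vector.} Using $f_P(S)=\sum_{U\subseteq S}\beta_P(U)$, we swap the order of summation. The coefficient of $\beta_P(U)$ is then a sum over $S\supseteq U$ of a product of independent local factors, one per gap of $[r-1]\setminus U$. Equivalently, $\H^*_P(x)=\sum_U \beta_P(U)\,w_U(x)$, where $w_U$ is what one obtains for the ``universal'' poset with a single nonzero flag $h$-entry. In practice $w_U(x)$ is $\H^*$ of a flag-$h$-elementary object, which we can compute on Boolean-like pieces, e.g.\ using multiplicativity of $\H^*$ under products if the paper establishes it later.

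\textbf{Step 3: identify $w_U$.} We expect that $w_U(x)=x^{|S|}(1+x)^{r-1-2|S|}$ when $U=[r-1]\setminus S$ with $S$ a stable subset of $[r-2]$, and $w_U=0$ otherwise. We will prove this by a transfer-matrix or generating-function computation in the composition variables. Build the product of local factors as a word in two letters according to whether $i\in U$. Show that a pair of consecutive non-members of $U$ kills the term. An isolated non-member at position $i$ contributes $x$ in place of $(1+x)^2$, and each member contributes $(1+x)$, giving total degree $r-1$.

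The main obstacle is this local computation, i.e.\ verifying the cancellation for adjacent gaps and the factor $x$ for isolated gaps. The fallback is to use Theorem~\ref{thm:main3}: substitute $\aaa=-x$, $\bbb=x$ into the extended $\aaa\bbb$-index written as $\sum_U\beta_P(U)u_U$, and use $(\aaa-\bbb)$-type factorization to get each monomial $u_U$ as a product of factors $(1-x)$ and $x(1-x)^2$ up to a normalization. The ``positivity'' claims are then immediate. Nonnegativity of $\beta_P$ gives $\gamma$-positivity. Cohen--Macaulay posets have $\beta_P(U)\ge0$, being reduced Betti numbers of rank-selected subposets (Stanley), which finishes the proof.
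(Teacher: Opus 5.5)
Your route is genuinely different from the paper's, and it can be completed: expand the M\"obius factors in the chain formula of Theorem~\ref{thm:iterative} by Hall's theorem, pass to $\beta_P$, and compute universal coefficients $w_U$. As written, though, there is a gap exactly where the content lies. Step~3 is only an expectation. The factorization into ``independent local factors'' in Step~2 is asserted without the reason that makes it true: the marking sum couples neighbouring ranks, and nothing explains why adjacent non-members of $U$ cancel.

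The missing idea is a cancellation performed \emph{before} any local computation. Let $T\subseteq\{0\}\cup S$ be the set of ranks of $c_0,\dots,c_{m-1}$, with rank $r$ always marked. Rank $0$ must be allowed, since it encodes $c_0=\widehat{0}$; $T\subseteq S$ is not enough. Let $\Pi(T)$ be the product of $x+\cdots+x^{d-1}$ over consecutive marks.

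Write $\alpha_P(S)=\sum_{U\subseteq S}\beta_P(U)$ for your flag $f$-vector (the paper's $f$ is the KLS function), and sum over $S$ first. The coefficient of $\Pi(T)$ in $w_U$ is then $(-1)^{r+1}\sum(-1)^{|S|}$ over $U\cup(T\setminus\{0\})\subseteq S\subseteq[r-1]$. This vanishes unless $U\cup T\supseteq[r-1]$, in which case it equals $+1$. Hence
\[
    w_U(x)=\sum_{\substack{T\subseteq\{0\}\cup[r-1]\\ T\supseteq[r-1]\setminus U}}\Pi(T),
\]
a positive sum in which every rank outside $U$ is a forced mark.

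Since $d=1$ contributes $0$, two adjacent forced marks kill every term. So does a forced mark at $r-1$, which is adjacent to $r$. This gives exactly stability and $r-1\notin[r-1]\setminus U$. The sum now genuinely factors over the blocks between consecutive forced marks. From $\sum_{d\geq 2}(x+\cdots+x^{d-1})z^d=xz^2/((1-z)(1-xz))$, a block of length $n\geq 2$ contributes $x(1+x)^{n-2}$ after summing over compositions. The initial block $[0,s_1]$, where rank $0$ is optional, contributes $1+\sum_{n=2}^{s_1}x(1+x)^{n-2}=(1+x)^{s_1-1}$. Multiplying gives $x^k(1+x)^{r-1-2k}$ with $k=|[r-1]\setminus U|$.

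With this filled in, you have a self-contained, purely chain-combinatorial proof. The paper instead hides the same bookkeeping inside Theorem~\ref{thm:expsi-to-dualchow}, after which only a monomial-by-monomial evaluation is needed.

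Your fallback is the paper's actual proof, but you misstate it. The specialization is $y=-x$, $\aaa=x$, $\bbb=1$, applied to $\expsitilde_P=(1+y)\,\omega(\Psi_P)$, with $\omega$ acting on each monomial $m_S$ of $\Psi_P=\sum_S\beta_P(S)m_S$. Under it:
\begin{itemize}
    \item every $\aaa\bbb$ becomes $x(1-x)^2$;
    \item every remaining $\bbb$ becomes $(1-x)(1+x)$;
    \item every remaining $\aaa$ becomes $\aaa+y\bbb=0$.
\end{itemize}
That last zero is what produces stability and the exclusion of $r-1$. Your version ($\aaa=-x$, $\bbb=x$, factors $(1-x)$ and $x(1-x)^2$) omits both $\omega$ and this vanishing. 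Note also that $x^k(1+x)^{r-1-2k}$ has degree $r-1-k$, not $r-1$; what all the terms share is the centre of symmetry $(r-1)/2$.

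The suggestion to obtain $w_U$ from multiplicativity of $\H^*$ under products cannot work. The Cartesian product formula for $\H^*_{P\times Q}$ has a nontrivial correction sum. Moreover, $\beta_P(\emptyset)=1$ for every bounded graded poset, so no poset has flag $h$-vector supported on a single $U\neq\emptyset$. Thus $w_U$ must be computed formally, as above.

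Your Cohen--Macaulay step is fine.
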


We then study how dual Chow polynomials behave under standard operations on posets. We discuss joins of posets, augmentation from the top, poset duality, Cartesian products, and truncations. These identities are useful both for computations and for understanding the formal behavior of the invariant.

The matroidal case is especially important. We prove a deletion formula for the dual Chow polynomial of the lattice of flats of a matroid. For a matroid $\M$ on $E$ and an element $i\in E$ that is not a coloop, define $\mathscr{S}_i=\{F\in\cL(\M): i\notin F,\ F\cup\{i\}\in\cL(\M)\}$ and $\underline{\mathscr{S}}_i=\mathscr{S}_i\setminus\{\widehat{0}\}$.

\begin{theorem}[{Theorem~\ref{thm:dual-chow-deletion}}]\label{thm:main5}
    Let $\M$ be a matroid of rank $r$, and let $i\in E$ be an element that is not a coloop and does not have parallel elements. Then
    \[
        \H^*_{\M}(x)
        =
        \H^*_{\M\setminus i}(x)
        +(x+1)\H^*_{\M/i}(x)
        +x\sum_{F\in\underline{\mathscr{S}}_i}
        \H^*_{\M|F}(x)\,
        \H^*_{\M/(F\cup\{i\})}(x).
    \]
\end{theorem}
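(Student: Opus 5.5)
The plan is to derive the identity from the chain formula of Theorem~\ref{thm:main1}, by comparing chains of flats of $\M$ with chains of flats of $\M\setminus i$, $\M/i$, and of the products $\M|F\times \M/(F\cup\{i\})$. I would package the chain formula as a recursion: expanding over the last step $c_{m-1}<\widehat{1}$ gives
\[
\H^*_{\M}(x) = (-1)^{r}\mu_{\widehat{0}\widehat{1}} + \sum_{\widehat{0}\le G<E} (-1)^{r-\rho(G)}\mu_{G\widehat{1}}\,\frac{x^{r-\rho(G)}-x}{x-1}\,\H^*_{\M|G}(x),
\]
where I use $(-1)^{\rho(G)}\cdot$ for the signs of the inner piece. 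With this recursion, the natural strategy is induction on the size of the ground set, or on the rank. Both sides of the claimed identity should satisfy the same recursion, up to correction terms, and the task is to show those correction terms cancel.

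The flats of $\M$ split according to $i$. Flats $G\not\ni i$ with $G\cup i$ not a flat correspond to flats of $\M\setminus i$ whose closure in $\M\setminus i$ gains nothing. Flats $G\ni i$ with $G\setminus i$ not a flat correspond to flats of $\M/i$, or of $\M\setminus i$ after deleting $i$. Flats in $\mathscr S_i$ pair with $F\cup\{i\}$. Since $i$ is not a coloop, $\rk(\M\setminus i)=r$. Since $i$ has no parallel elements, $\{i\}$ is a flat of rank $1$ and $\widehat 0\in\mathscr S_i$. I would use the standard Möbius function deletion relation for geometric lattices (Brylawski / Weisner type),
\[
\mu_{\M}=\mu_{\M\setminus i}-\mu_{\M/i},
\]
applied on every interval $[G,E]$. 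This is the deletion–contraction rule for the characteristic polynomial evaluated at the constant term. Plugging it into the recursion for $\H^*_{\M}$ and regrouping by the type of the last flat $G$, the terms should reorganize into three groups: the recursion for $\H^*_{\M\setminus i}$, the recursion for $\H^*_{\M/i}$ shifted by the factor coming from $\{i\}$, and, for the pairs $F\subset F\cup i$, products $\H^*_{\M|F}\H^*_{\M/(F\cup i)}$. The products should arise because a chain passing through both $F$ and $F\cup\{i\}$ factors. Here I would use the multiplicativity of the dual Chow polynomial along chains, via the product formula for Cartesian products proved earlier together with $[F,F\cup i]$ being a rank-one interval.

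To keep the bookkeeping manageable, I would first prove the analogous deletion formula at the level of the $\aaa\bbb$-index, or the extended index $\expsitilde$. The abstract indicates a general deletion formula for the $\aaa\bbb$-index of matroids, with $\mathscr S_i$ sums, precisely in this shape, and I would assume that result. Then I would specialize it through Theorem~\ref{thm:main3}, $\H^*_P(x)=(1-x)^{-r}\expsitilde_P(-x,x,1)$. The specialization turns the $\aaa\bbb$-deletion terms into $\H^*_{\M\setminus i}$ (rank $r$), a contraction term of rank $r-1$, and terms of the form $\H^*_{\M|F}\H^*_{\M/(F\cup i)}$ of total rank $r-1$. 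The prefactors $(x+1)$ and $x$ then come from evaluating the single-letter insertions (an $\aaa$ or $\bbb$, or $\mathbf{c}$-type term for the covering step $F\lessdot F\cup i$) at $(\aaa,\bbb)=(-x,x)$ and dividing by the missing power of $(1-x)$.

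The main obstacle is checking that the contraction term gets coefficient exactly $x+1$. The extra $1$ should come from the $F=\widehat 0$ member of $\mathscr S_i$, since $\H^*_{\M|\widehat 0}=1$ and $\M/(\{i\})=\M/i$, which merges with a separate contraction term $x\H^*_{\M/i}$. I would verify this on $U_{1,2}$ and $U_{2,3}$ to fix signs before writing the general specialization.
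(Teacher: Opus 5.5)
Your final route is the paper's: assume the $\aaa\bbb$-deletion formula of Theorem~\ref{thm:ab-deletion} and push it through Theorem~\ref{thm:expsi-to-dualchow}. The earlier chain-formula/M\"obius sketch is not needed. In particular, the Cartesian product formula plays no role: nothing here is a product of posets, and the products $\H^*_{\M|F}\H^*_{\M/(F\cup\{i\})}$ come from the concatenation $\Psi_{\cL(\M|F)}\,\aaa\bbb\,\Psi_{\cL(\M/(F\cup\{i\}))}$.

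The gap is in the specialization step. Once Theorem~\ref{thm:ab-deletion} is assumed, this step is the whole proof, and you flag it yourself as the main obstacle, but you describe it incorrectly. The evaluation is at $(y,\aaa,\bbb)=(-x,x,1)$, not $(\aaa,\bbb)=(-x,x)$. Moreover, the inserted letters cannot be evaluated directly: you must first apply $\omega$ termwise and multiply by $1+y$. A leading $\bbb$, or an inserted block $\aaa\bbb$, never forms an $\aaa\bbb$-factor with its neighbouring letters, so $\omega$ factors over these concatenations. Using $\omega(\bbb)=\bbb+y\aaa$ and $\omega(\aaa\bbb)=(1+y)(\aaa\bbb+y\bbb\aaa)$, with one factor $1+y$ absorbed into $\expsitilde_{\cL(\M|F)}$, you get Corollary~\ref{thm:extended-ab-deletion}:
\[
    \expsitilde_{\cL(\M)}
    =
    \expsitilde_{\cL(\M\setminus i)}
    +(\bbb+y\aaa)\,\expsitilde_{\cL(\M/i)}
    +\sum_{F\in\underline{\mathscr{S}}_i}
    \expsitilde_{\cL(\M|F)}\,(\aaa\bbb+y\bbb\aaa)\,\expsitilde_{\cL(\M/(F\cup\{i\}))}.
\]

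Your explanation of the coefficient $x+1$ is wrong. The sum in Theorem~\ref{thm:ab-deletion} runs over $\underline{\mathscr{S}}_i$, so there is no $F=\widehat{0}$ product term to merge with a contraction term. The single term $\bbb\,\Psi_{\cL(\M/i)}$ produces the whole coefficient, because $\bbb+y\aaa\mapsto 1-x^2=(1+x)(1-x)$. Similarly, $\aaa\bbb+y\bbb\aaa\mapsto x-x^2=x(1-x)$ gives the factor $x$ on the product terms.

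To finish, use that $i$ is not a coloop:
\begin{itemize}
\item $\rk(\M\setminus i)=r$ and $\rk(\M/i)=r-1$;
\item $\rk(\M|F)+\rk(\M/(F\cup\{i\}))=r-1$, with $F\cup\{i\}\neq E$. So no rank-zero factor occurs, where the convention $\expsitilde_{ss}=1$ would break the factorization.
\end{itemize}
Then $\expsitilde_{\cL(\N)}(-x,x,1)=(1-x)^{\rk(\N)}\H^*_{\N}(x)$ shows every term carries exactly $(1-x)^r$, and dividing gives the statement. Finally, $\U_{1,2}$ is not a valid test case, since its two elements are parallel.
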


This deletion formula is obtained by first proving a deletion formula at the level of the classical $\aaa\bbb$-index, and then evaluating using an extension followed by a specialization. These formulas are reminiscent of the semi-small decomposition formula for ordinary Chow polynomials proved in \cite{semismall,ferroni-matherne-stevens-vecchi}, but with different correction terms.

\begin{theorem}[{Theorem~\ref{thm:ab-deletion}}]\label{thm:main6}
    Let $\M$ be a matroid on $E$ and let $i\in E$ be an element that is not a coloop and that does not have parallel elements. The $\aaa\bbb$-index of a matroid $\M$ on $E$ satisfies the following deletion formula:
    \begin{align*}
        \Psi_{\cL(\M)}
        &= \Psi_{\cL(\M\setminus i)} 
            + \bbb \Psi_{\cL(\M/i)} 
            + \sum_{F\in \underline{\mathscr{S}}_i } 
                \Psi_{\cL(\M|F)}\, \aaa\bbb\, \Psi_{\cL(\M/(F\cup\{i\}))} \,.
    \end{align*}
\end{theorem}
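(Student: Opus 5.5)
We prove the deletion formula for $\Psi_{\cL(\M)}$ at the level of flag $f$-vectors, that is, counts of chains of flats, and then translate to the $\aaa\bbb$-index. Recall that $\Psi_P=\sum_{S}\beta_P(S)\,u_S$ and $\Upsilon_P=\sum_S \alpha_P(S)\,w_S$. Here $w_S$ is the $\aaa\bbb$-word with $\bbb$ exactly in the positions of $S$, and $\Upsilon_P$ equals $\Psi_P$ after the substitution $\aaa\mapsto \aaa-\bbb$. Equivalently, $\Psi_P$ is obtained from the chain-enumerating word $\sum_{\zero<x_1<\cdots<x_k<\one}$ by reading each chain as a word in $(\aaa-\bbb)$'s and $\bbb$'s. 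The chain encoding is convenient because everything decomposes chain by chain. Both sides are additive over chains, and the product $\Psi_{P}\,\aaa\bbb\,\Psi_{Q}$ corresponds to chains that pass through a point splitting the interval into $P$ and $Q$. So the goal is a bijective decomposition of the set of chains of flats of $\M$.

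\textbf{Step 1: classify the flats of $\M$ relative to $\M\setminus i$.} Since $i$ is not a coloop, $\M\setminus i$ has the same rank $r$. The flats of $\M\setminus i$ are the sets $G\setminus i$ for $G\in\cL(\M)$. The map $G\mapsto G\setminus i$ is surjective onto $\cL(\M\setminus i)$ and rank-preserving. It is two-to-one exactly over $\mathscr{S}_i$: both $F$ and $F\cup i$ are flats, with ranks $\rk F$ and $\rk F+1$. Otherwise it is a bijection. Because $i$ has no parallel elements, $\{i\}$ is a flat, so $\zero\in\mathscr{S}_i$. Flats containing $i$ correspond to $\cL(\M/i)$.

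\textbf{Step 2: sort chains by where $i$ enters.} For a chain $\zero<G_1<\cdots<G_k<E$ in $\cL(\M)$, let $G_j$ be the first flat containing $i$, and look at its predecessor $G_{j-1}$. There are three cases.
\begin{enumerate}
\item $G_{j-1}\cup i\neq G_j$, or $G_{j-1}\notin\mathscr{S}_i$. Deleting $i$ gives a chain of $\cL(\M\setminus i)$ with the same ranks.
\item $G_{j-1}\cup i=G_j$ with $G_{j-1}=\zero$, so $G_1=\{i\}$. The rest of the chain is a chain in $\cL(\M/i)$.
\item $G_{j-1}=F\in\underline{\mathscr{S}}_i$ and $G_j=F\cup i$. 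The chain splits into a chain of $\cL(\M|F)$ ending at $F$, followed by a chain of $\cL(\M/(F\cup i))$.
\end{enumerate}
The key point is that chains of $\cL(\M\setminus i)$ with a given rank set are overcounted by case (1) in a controlled way. A chain of $\M\setminus i$ through some $F\in\mathscr{S}_i$ has one lift in which $F$ stays without $i$ and $i$ enters later, and possibly another lift in which $F$ is replaced by $F\cup i$, one rank higher. Rank positions change, so a naive count fails. This is why I would prefer to work at the level of $\Psi$ via a recursion rather than raw flag $f$-vectors.

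\textbf{Step 3: use the Ehrenborg--Readdy-type coproduct/recursion.} Concretely, I would use the standard recursion $\Psi_{[\zero,\one]}=\sum$ over $\zero<x\le\one$ of terms $(\aaa-\bbb)^{\rho(x)-1}\bbb\,\Psi_{[x,\one]}$ (with an appropriate normalization), expanding at the minimal element of a chain. Applying it to the first flat containing $i$, together with the fact that $[G,E]$ for $G\ni i$ coincides in $\cL(\M)$ and in $\cL(\M/i)$, should produce the three terms. The $\bbb\Psi_{\cL(\M/i)}$ term comes from the atom $\{i\}$, which has rank one in $\M$ but is absent from $\M\setminus i$. The $\aaa\bbb$ in the middle of the third term records the extra rank jump from $F$ to $F\cup i$: the rank goes up by one without a corresponding step in $\M\setminus i$.

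\textbf{Main obstacle.} The hardest step is the precise bookkeeping showing that the rank shifts in case (3) are exactly compensated, so that the difference $\Psi_{\cL(\M)}-\Psi_{\cL(\M\setminus i)}$ equals these correction terms. My first attempt would be induction on $|E|$: compare both sides via the recursion at the top element, applying the inductive hypothesis to the lower intervals $[\zero,G]=\cL(\M|G)$ with $i\in G$. Those are smaller matroids in which $i$ is still not a coloop (when $G\neq\{i\}\cup(\text{coloops})$) and still has no parallels. The identity $\Psi_{[\zero,G]}\cdots$ then reduces to checking the base cases, where $i$ becomes a coloop and $\M|G\setminus i$ drops rank, separately.
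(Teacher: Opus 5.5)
As written, this does not prove the statement. The bookkeeping you flag as the ``main obstacle'' is the whole content of the theorem, and the Step 3 fallback (a first-element recursion plus induction on $|E|$) is only gestured at. More importantly, your Step 2 split is wrong, and that is what creates the apparent rank-shift problem.

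Case (1) is not rank-preserving. In $\U_{3,4}$ with $i=4$, consider the chain $\{\{1,4\}\}$. Here $G_{j-1}=\zero$ and $\zero\cup\{4\}\neq\{1,4\}$, so the chain lands in case (1). Yet deleting $4$ gives $\{1\}$, which has rank $1$ rather than $2$. The same failure occurs whenever the first flat containing $i$ is $F\cup\{i\}$ with $F\in\underline{\mathscr{S}}_i$ but $F$ itself is not in the chain. Relatedly, the map $G\mapsto G\setminus i$ in Step 1 is not rank-preserving on exactly these flats. The non-coloop criterion in Step 3 is also off: for a flat $G\ni i$, $i$ is a coloop of $\M|G$ exactly when $G\setminus i\in\mathscr{S}_i$.

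The repair is to classify a chain by its first flat containing $i$, not by that flat's predecessor.
\begin{itemize}
\item The closure map $C\mapsto\mathrm{cl}_{\M}(C)$ is a rank-preserving order embedding $\cL(\M\setminus i)\hookrightarrow\cL(\M)$. It sends bottom to bottom and top to top, since $i$ is neither a loop nor a coloop. Its image is $\cL(\M)\setminus D$, where $D=\{F\cup\{i\}:F\in\mathscr{S}_i\}$.
\item By the criterion above and submodularity, $D$ is a down-set among the flats containing $i$. Hence a chain meets $D$ if and only if its first flat containing $i$ lies in $D$.
\item Chains avoiding $D$ contribute exactly $\Psi_{\cL(\M\setminus i)}$, because chain weights depend only on ranks.
\item Suppose the first flat containing $i$ is $F\cup\{i\}$ with $F\in\mathscr{S}_i$. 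The part of the chain below it is a chain in $(\zero,F]$, and the part above is a chain of $\cL(\M/(F\cup\{i\}))$.
\item Summing over whether $F$ is present or absent puts $\bbb+(\aaa-\bbb)=\aaa$ at position $\rk F$ and $\bbb$ at position $\rk F+1$. This yields $\Psi_{\cL(\M|F)}\,\aaa\bbb\,\Psi_{\cL(\M/(F\cup\{i\}))}$, while $F=\zero$ gives $\bbb\Psi_{\cL(\M/i)}$.
\end{itemize}
So the $\aaa$ is not compensating a rank jump; it records that $F$ may or may not appear in the chain. With this fix your chain-level route becomes a complete proof, and it avoids labelings altogether.

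The paper argues differently. It uses maximal chains and the minimum-element $R$-labeling with $i$ ordered last. Maximal chains with no cover $F\lessdot F\cup\{i\}$ correspond to maximal chains of $\M\setminus i$ with the same descent word. In every other maximal chain, the unique cover labeled $i$ forces the pattern $\aaa\bbb$, or a leading $\bbb$ when $F=\zero$.
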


We regard the above formula as very significant because it also has applications to other well-studied invariants of matroids; including various versions of the Poincar\'e extended $\aaa\bbb$-index (see Corollary~\ref{thm:extended-ab-deletion} and the discussion on $h$-polynomials of Bergman complexes in Section~\ref{sec:final-remarks}).

One consequence is that $\H^*_{\M}(x)$ is $\gamma$-positive for every matroid $\M$. We also formulate a real-rootedness conjecture for dual Chow polynomials of matroids, in parallel with the Huh--Stevens and Ferroni--Schr\"oter conjecture for ordinary Chow polynomials \cite{stevens-bachelor,ferroni-schroter}. We prove the real-rootedness for dual Chow polynomials of uniform matroids, but we show that there exist EL-shellable posets with non-real-rooted dual Chow polynomial.

Finally, we discuss the problem of interpreting geometrically the dual Chow polynomial. The ordinary Chow polynomial of a matroid is the Hilbert series of the Chow ring $\uCH(\M)$. The dual Chow polynomial cannot be the Hilbert series of a standard graded algebra with one-dimensional degree-zero part. Instead, we ask for \emph{dual Chow module} $\uCH^{\star}(\M)$ built from the Leray model of Bibby--Denham--Feichtner \cite{bibbi-denham-feichtner}, and such that
\[
    \Hilb(\uCH^{\star}(\M),x)=\H^*_{\M}(x).
\]

\subsection*{Acknowledgments}

We are tremendously grateful to Matt Larson and Christian Stump for numerous useful comments provided throughout the preparation of this work, and on earlier versions of this manuscript. LF is a member of the GNSAGA group of the Istituto Nazionale di Alta Matematica (INdAM).

\section{Preliminaries}\label{sec:preliminaries}

Throughout this paper we will largely follow the notation in \cite{ferroni-matherne-vecchi}. However, we give a brief recapitulation about the essentials on Chow functions for partially ordered sets.

\subsection{Incidence algebras}
Henceforth $P$ will always denote a partially ordered set. Recall that~$P$ is said to be \emph{locally finite} if every closed interval
\[
[s,t]=\{w\in P: s\leq w\leq t\}
\]
is finite. The set of all closed intervals of $P$ will be denoted~$\Int(P)$. The \emph{incidence algebra} of~$P$, which we denote by $\mathcal{I}(P)$, is the free $\mathbb{Z}[x]$-module with basis $\Int(P)$. Equivalently, an element $a\in \mathcal{I}(P)$ assigns to each interval $[s,t]$ a polynomial $a_{st}(x)\in \mathbb{Z}[x]$. We often suppress the variable and write simply $a_{st}$ when no confusion can arise. The multiplication in $\mathcal{I}(P)$ is given by convolution:
\[
    (ab)_{st}(x)=\sum_{s\leq w\leq t} a_{sw}(x)\,b_{wt}(x),
    \qquad \text{for every } s\leq t \text{ in } P.
\]

This algebra has the following elementary features.

\begin{enumerate}[(i)]
    \item The convolution product is associative, although it is not commutative in general.
    \item The identity element is the function $\delta\in \mathcal{I}(P)$ given by
    \[
        \delta_{st}=
        \begin{cases}
            1 & \text{if } s=t,\\
            0 & \text{if } s<t.
        \end{cases}
    \]
\end{enumerate}

The following elementary criterion characterizes the invertible elements of an incidence algebra. An element $a\in \mathcal{I}(P)$ has a two-sided inverse $a^{-1}\in \mathcal{I}(P)$ if and only if
    \[
        a_{ss}(x)=\pm 1
    \]
    for every $s\in P$.

A fundamental invertible element in $\mathcal{I}(P)$ is given by the \emph{zeta function} $\upzeta\in \mathcal{I}(P)$ that is defined by
\[
    \upzeta_{st}=1
    \qquad \text{for all } s\leq t.
\]
Its inverse, known as the \emph{M\"obius function}, is denoted by $\mu=\upzeta^{-1}$. The M\"obius function can also be computed recursively by
\[
    \mu_{st}=
    \begin{cases}
        1 & \text{if } s=t,\\
        -\displaystyle\sum_{s\leq w<t}\mu_{sw} & \text{if } s<t.
    \end{cases}
\]

The following lemma, stated in \cite[Lemma~5.3]{ehrenborg-readdy} is a useful tool to compute inverses in incidence algebras.

\begin{lemma}\label{lem:inverses-as-sums-over-chains}
    Let $P$ be a locally finite poset, and let $a\in \mathcal{I}(P)$ be an invertible element such that~$a_{ss}=1$ for all $s\in P$. The element $a^{-1}\in \mathcal{I}(P)$ is given by $(a^{-1})_{ss} = 1$ for all $s\in P$, and
    \[ (a^{-1})_{st} = \sum_{s = s_0 < s_1 < \cdots < s_m = t} (-1)^m\, a_{s_0s_1}\, a_{s_1s_2}\cdots a_{s_{m-1}s_m}\]
    for all $s < t$ in $P$.
\end{lemma}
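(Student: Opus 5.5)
We need to prove the lemma: for $a$ with $a_{ss}=1$, the inverse is given by an alternating sum over chains. The natural approach is to write $a=\delta+n$, where $n_{st}=a_{st}$ for $s<t$ and $n_{ss}=0$, and expand the inverse as a geometric series in $n$.

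First, we compute powers of $n$. Unwinding the convolution product inductively, $(n^m)_{st}$ equals
\[
(n^m)_{st}=\sum_{s=s_0\le s_1\le\cdots\le s_m=t} n_{s_0s_1}\cdots n_{s_{m-1}s_m}.
\]
Since $n$ vanishes on the diagonal, only strict chains $s=s_0<s_1<\cdots<s_m=t$ contribute. Each such term is then exactly $a_{s_0s_1}\cdots a_{s_{m-1}s_m}$. In particular, $(n^m)_{st}=0$ whenever $m$ exceeds the length of the longest chain in $[s,t]$. That length is finite because $P$ is locally finite. Also, $(n^0)_{st}=\delta_{st}$.

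Next, we define $b\in\mathcal{I}(P)$ intervalwise by
\[
b_{st}=\sum_{m\ge 0}(-1)^m (n^m)_{st}.
\]
Each sum is finite by the previous step, so $b$ is well defined even when $P$ itself is infinite. Its entries are the claimed expression: $b_{ss}=1$, and for $s<t$ we get $b_{st}=\sum (-1)^m a_{s_0s_1}\cdots a_{s_{m-1}s_m}$ over strict chains from $s$ to $t$.

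It remains to check that $ab=\delta$ and $ba=\delta$. Fix an interval $[s,t]$. We have $(ab)_{st}=b_{st}+(nb)_{st}$, and
\[
(nb)_{st}=\sum_{m\ge0}(-1)^m(n^{m+1})_{st}.
\]
This uses distributivity of convolution over the finite sums, which is legitimate because everything is computed inside the finite interval $[s,t]$. The two sums telescope to $(n^0)_{st}=\delta_{st}$, so $ab=\delta$. The computation for $ba$ is symmetric. Alternatively, since $a$ is already known to be invertible, the one-sided identity $ab=\delta$ forces $b=a^{-1}$.

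The only mildly delicate point is justifying the infinite formal sum $\sum_m(-1)^m n^m$ when $P$ is infinite. This is resolved by working interval by interval, where nilpotency makes every sum finite.
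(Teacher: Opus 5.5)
The paper does not prove this lemma; it quotes it from \cite[Lemma~5.3]{ehrenborg-readdy}, so there is no in-paper argument to compare against. Your proof is correct and complete. The following steps are all right:
\begin{enumerate}[(i)]
\item the decomposition $a=\delta+n$ with $n$ vanishing on the diagonal;
\item the identification of $(n^m)_{st}$ with the sum over strict chains of length $m$;
\item the vanishing of $(n^m)_{st}$ once $m$ exceeds the length of the finite interval $[s,t]$;
\item the interval-by-interval telescoping $b_{st}+(nb)_{st}=\delta_{st}$.
\end{enumerate}
Your closing remark $a^{-1}=a^{-1}(ab)=b$ correctly settles two-sidedness. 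This is the standard argument behind Philip Hall's chain formula for the M\"obius function, which is the case $a=\upzeta$.

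The other common presentation is induction on the length of $[s,t]$ via $(a^{-1})_{st}=-\sum_{s<w\leq t}a_{sw}(a^{-1})_{wt}$, splitting each chain at its first step $s_1=w$. That is the same computation as your telescoping step, just without packaging the chain sums as powers of $n$. Your version has the minor advantage of constructing the inverse outright, so it also shows that the invertibility hypothesis is automatic once $a_{ss}=1$ for all $s$.
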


\subsection{Kernels and KLS functions}

A \emph{weak rank function} on $P$ is a map
\[
    \rho\colon \Int(P)\to \Z_{\geq 0}
\]
such that the following two conditions hold:
\begin{enumerate}[(i)]
    \item If $s<t$, then $\rho_{st}>0$.
    \item If $s\leq w\leq t$, then
    \(
        \rho_{st}=\rho_{sw}+\rho_{wt}.
    \)
\end{enumerate}
In particular, the additivity condition implies $\rho_{ss}=0$ for every $s\in P$. A \emph{weakly ranked poset} is a pair $(P,\rho)$, where $P$ is a poset and $\rho$ is a weak rank function on it. When $P$ has a minimum element $\widehat{0}$, we shall often abbreviate
\(
    \rho(w):=\rho_{\widehat{0},w}.
\)

Given a weak rank function $\rho$ on a locally finite poset $P$, we consider the following subalgebra of $\mathcal{I}(P)$:
\begin{equation}\label{eq:def-of-I-rho-P}
    \mathcal{I}_{\rho}(P)
    =
    \left\{
        a\in \mathcal{I}(P):
        \deg a_{st}(x)\leq \rho_{st}
        \text{ for all } s\leq t \text{ in } P
    \right\}.
\end{equation}
On this subalgebra there is a natural involution $a\mapsto a^{\rev}$, defined interval-wise by
\begin{equation}
    \left(a^{\rev}\right)_{st}(x)
    =
    x^{\rho_{st}}a_{st}(x^{-1}).
\end{equation}
The notation ``rev'' is meant to indicate that this operation reverses the coefficients of the polynomial associated with each interval, relative to the weak rank function. Directly from the definition, one checks that this involution is compatible with convolution:
\(
    (ab)^{\rev}=a^{\rev}b^{\rev}.
\)
Consequently, if $a\in \mathcal{I}_{\rho}(P)$ is invertible, then taking inverses also commutes with this operation:
\(
    (a^{-1})^{\rev}=\left(a^{\rev}\right)^{-1}.
\)

\begin{definition}\label{def:p-kernel}
    Let $(P,\rho)$ be a weakly ranked poset. An element $\kappa\in \mathcal{I}_{\rho}(P)$ is said to be a \emph{$(P,\rho)$-kernel} if $\kappa_{ss}(x) = 1$ for all $s\in P$ and
        \( \kappa^{-1} = \kappa^{\rev}.\)
    The element $\kappa$ is said to be \emph{non-degenerate} if $\deg \kappa_{st} = \rho_{st}$ for every $s\leq t$ in $P$.
\end{definition}

It is not difficult to see that whenever $a\in \mathcal{I}_{\rho}(P)$ is invertible, the elements $\kappa_1 = a^{\rev} a^{-1}$ and $\kappa_2 = a^{-1}a^{\rev}$ are $(P,\rho)$-kernels. A crucial result by Stanley in \cite{stanley-local} shows that in fact all kernels arise are of this form. Furthermore, he showed the following (see \cite[Theorem~2.2]{proudfoot-kls} for a short proof).

\begin{theorem}\label{thm:kls-functions}
    Let $\kappa\in \mathcal{I}_{\rho}(P)$ be a $(P,\rho)$-kernel. Then there is a unique element $f\in \mathcal{I}_{\rho}(P)$ with the following properties:
    \begin{enumerate}[\normalfont(i)]
        \item \label{it:f-i} $f_{ss}(x)=1$ for every $s\in P$.
        \item \label{it:f-ii} $\deg f_{st}(x)<\frac{1}{2}\rho_{st}$ whenever $s<t$.
        \item \label{it:f-iii} $\kappa=f^{\rev}\cdot f^{-1}$.
    \end{enumerate}
    Likewise, there is a unique element $g\in \mathcal{I}_{\rho}(P)$ satisfying:
    \begin{enumerate}[\normalfont(i')]
        \item \label{it:g-i} $g_{ss}(x)=1$ for every $s\in P$.
        \item \label{it:g-ii} $\deg g_{st}(x)<\frac{1}{2}\rho_{st}$ whenever $s<t$.
        \item \label{it:g-iii} $\kappa=g^{-1}g^{\rev}$.
    \end{enumerate}
\end{theorem}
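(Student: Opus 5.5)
The plan is the standard Kazhdan--Lusztig-style argument: build $f$ interval by interval, by induction on $\rho_{st}$, and prove uniqueness by a degree-separation argument. The $g$ statement then follows by applying the $f$ statement to the opposite poset, or by the mirror-image argument.

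\emph{Reformulation.} Condition (iii) is equivalent to $f^{\rev}=\kappa f$. Evaluated on an interval $s<t$, this reads
\[
x^{\rho_{st}}f_{st}(x^{-1}) - f_{st}(x) = \sum_{s< w\le t}\kappa_{sw}(x)f_{wt}(x),
\]
using $\kappa_{ss}=1$. Fix $t$ and proceed by downward induction on $s$, i.e.\ by induction on $\rho_{st}$. The right-hand side $r_{st}$ involves only $f_{wt}$ with $\rho_{wt}<\rho_{st}$, so it is already determined.

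\emph{Key step: the antisymmetry of $r_{st}$.} We must show that $r_{st}$ satisfies $x^{\rho_{st}}r_{st}(x^{-1})=-r_{st}(x)$. Given this, there is a unique polynomial $f_{st}$ of degree $<\rho_{st}/2$ with $f_{st}^{\rev}-f_{st}=r_{st}$. Indeed, $f_{st}$ contributes only to the coefficients of $x^j$ with $j<\rho_{st}/2$, with sign $-1$, and its reverse contributes to the coefficients with $j>\rho_{st}/2$. Antisymmetry forces the middle coefficient of $r_{st}$ to vanish and makes the two halves consistent. So we take $f_{st}=-(\text{low-degree half of } r_{st})$. This gives both existence and uniqueness, and uniqueness is all that is needed for the global uniqueness claim.

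\emph{Proving antisymmetry.} Write $r_{st}=(\kappa f)_{st}-f_{st}$. Then
\[
r^{\rev}_{st}=(\kappa^{\rev}f^{\rev})_{st}-f^{\rev}_{st}=(\kappa^{-1}f^{\rev})_{st}-f^{\rev}_{st}.
\]
By the inductive hypothesis, $f^{\rev}_{wt}=(\kappa f)_{wt}$ for all $s<w\le t$. Expand
\[
(\kappa^{-1}f^{\rev})_{st}=f^{\rev}_{st}+\sum_{s<w\le t}(\kappa^{-1})_{sw}f^{\rev}_{wt}
\]
and substitute $f^{\rev}_{wt}=(\kappa f)_{wt}$ for $w>s$. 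The sum becomes $(\kappa^{-1}\kappa f)_{st}$ minus the correction term $(\kappa^{-1})_{ss}(\kappa f)_{st}$, which equals $f_{st}-(\kappa f)_{st}$. After cancelling the $f^{\rev}_{st}$ terms, we get $r^{\rev}_{st}=f_{st}-(\kappa f)_{st}=-r_{st}$, as required.

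The main obstacle is this bookkeeping. The identity has to be arranged so that the unknown $f_{st}$ appears only in the form $f_{st}-(\kappa f)_{st}$, which cancels and does not depend on $f_{st}$. I would handle this by working with the truncated element that agrees with $f$ on intervals $[w,t]$ for $w>s$ and is set to $0$ on $[s,t]$.

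Finally, $\deg f_{st}\le\rho_{st}$ holds automatically, so $f\in\mathcal{I}_\rho(P)$.
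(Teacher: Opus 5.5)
Your proposal is correct and follows essentially the same route as the standard Kazhdan--Lusztig--Stanley argument, which the paper does not reprove but cites from Stanley and \cite[Theorem~2.2]{proudfoot-kls}: induct on $\rho_{st}$, use $\kappa^{\rev}\kappa=\delta$ and the inductive hypothesis $f^{\rev}_{wt}=(\kappa f)_{wt}$ to show that $r_{st}=\sum_{s<w\le t}\kappa_{sw}f_{wt}$ satisfies $r^{\rev}_{st}=-r_{st}$, and then take $f_{st}$ to be minus the low-degree half of $r_{st}$. Your truncated-element bookkeeping for the not-yet-defined $f_{st}$ is sound, and the statement for $g$ follows either from the mirror computation or by passing to $P^*$ with $\kappa^*_{ts}=\kappa_{st}$, which is again a kernel because $a\mapsto a^*$ is an anti-isomorphism of incidence algebras.
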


Adopting the terminology of \cite{brenti} and \cite[Section~2]{proudfoot-kls}, we refer to $f$ as the \emph{right Kazhdan--Lusztig--Stanley (KLS) function} associated with $\kappa$, and to $g$ as the corresponding \emph{left Kazhdan--Lusztig--Stanley (KLS) function}. When $P$ is bounded, the polynomial
\[
    f_P(x):=f_{\widehat{0}\,\widehat{1}}(x)
\]
is called the \emph{right Kazhdan--Lusztig--Stanley polynomial} of $P$, while
\[
    g_P(x):=g_{\widehat{0}\,\widehat{1}}(x)
\]
is called its \emph{left Kazhdan--Lusztig--Stanley polynomial}.

\subsection{Chow functions} As was shown in \cite[Section~3]{ferroni-matherne-vecchi}, each $(P,\rho)$-kernel $\kappa$ induces an element $\H\in \mathcal{I}_{\rho}(P)$ satisfying remarkable properties. The definition is as follows. First, note that the condition on $\kappa$ being a kernel imposes that $\kappa_{st}(x)$ is divisible by $x-1$ whenever $s < t$. Then, one defines the object $\overline{\kappa}\in \mathcal{I}_{\rho}(P)$ by
   \[ \overline{\kappa}_{st}(x) 
    = \begin{cases}
        \frac{1}{x-1}\, \kappa_{st}(x) & \text{ if $s < t$}\\
        -1 & \text{ if $s = t$}.
    \end{cases}\]

\begin{definition}\label{def:chow-function}
    Let $\kappa$ be a $(P,\rho)$-kernel. The \emph{Chow function} associated to $\kappa$, or the \emph{$\kappa$-Chow function}, is defined as the element $\H\in \mathcal{I}_{\rho}(P)$ defined by
    \[ \H = - \left(\overline{\kappa}\right)^{-1}.\]
    If the poset $P$ is bounded, the polynomial $\H_P(x)=\H_{\widehat{0}\,\widehat{1}}(x)$ is customarily called the \emph{$\kappa$-Chow polynomial} of the poset.
\end{definition}

Chow functions satisfy several important properties that we now recapitulate.

\begin{proposition}[{\cite[Proposition~3.4]{ferroni-matherne-vecchi}}]\label{prop:degree-and-symmetry}
    Let $\kappa$ be a $(P,\rho)$-kernel, and let $\H\in \mathcal{I}_{\rho}(P)$ be the corresponding Chow function. The following properties hold.
    \begin{enumerate}[\normalfont(i)]
        \item\label{it:degree-of-H} For every $s < t$, we have that
            \[ [x^{\rho_{st}-1}] \H_{st}(x) = [x^{\rho_{st}}] \kappa_{st}(x).\]
            In particular, if $\kappa$ is non-degenerate, then $\deg \H_{st}(x) = \rho_{st} - 1$ for every $s < t$.
        \item \label{it:chow-symmetry}The Chow function is symmetric, i.e.,
            \[ \H_{st}(x) = x^{\rho_{st} - 1}\, \H_{st}(x^{-1}) \qquad \text{ for every $s < t$}.\]
    \end{enumerate}
\end{proposition}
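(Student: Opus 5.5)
We prove both parts directly from the definition $\H=-\overline{\kappa}^{-1}$, that is, from the identity $\overline{\kappa}\,\H=-\delta$ (and also $\H\,\overline{\kappa}=-\delta$). Fix $s<t$ and write $\rho=\rho_{st}$. Evaluating $\overline{\kappa}\H=-\delta$ on $[s,t]$ and isolating the term $w=s$, where $\overline{\kappa}_{ss}=-1$, gives the recursion
\[
    \H_{st}(x)=\sum_{s<w\leq t}\overline{\kappa}_{sw}(x)\,\H_{wt}(x),\qquad \H_{tt}=1 .
\]
Here $\overline{\kappa}_{sw}=\kappa_{sw}/(x-1)$ has degree at most $\rho_{sw}-1$. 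We also need $\H_{wt}\in\mathcal{I}_\rho(P)$, with degree at most $\rho_{wt}-1$ for $w<t$. By induction on interval length, every term with $w<t$ then has degree at most $\rho-2$. The term $w=t$ is $\overline{\kappa}_{st}$, and its coefficient of $x^{\rho-1}$ equals $[x^{\rho}]\kappa_{st}$, because dividing by $x-1$ preserves leading coefficients in the top degree. This proves (i), together with the degree bound $\deg\H_{st}\le\rho-1$ used in the induction. If $\kappa$ is non-degenerate, that coefficient is nonzero, so $\deg\H_{st}=\rho-1$.

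For (ii), the idea is to compute $\overline{\kappa}^{\rev}$ and relate it to $\overline{\kappa}^{-1}$ using the kernel identity $\kappa^{-1}=\kappa^{\rev}$. For $s<t$ we have
\[
    (\overline{\kappa}^{\rev})_{st}=x^{\rho}\,\frac{\kappa_{st}(x^{-1})}{x^{-1}-1}=-\frac{x\,\kappa^{\rev}_{st}(x)}{x-1}.
\]
The diagonal entries are $-1$. Set $D=\overline{\kappa}$ and write $\kappa=\delta+(x-1)D'$, where $D'$ agrees with $D$ off the diagonal and vanishes on it; then $D=D'-\delta$. Expanding $\kappa\kappa^{\rev}=\delta$ in terms of $D'$ and $D'^{\rev}$ gives an identity which I expect to rearrange into
\[
    D\,\bigl(-x^{-1}\cdots\bigr)=\delta,
\]
that is, into a statement that $\H$ and a suitably rescaled $\H^{\rev}$ coincide. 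Concretely, I would define $\widetilde{\H}_{st}=x^{\rho_{st}-1}\H_{st}(x^{-1})$, with $\widetilde{\H}_{ss}=1$ by convention, and verify that $\overline{\kappa}\,\widetilde{\H}=-\delta$. Uniqueness of inverses then gives $\widetilde{\H}=\H$.

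To carry out this verification, apply the substitution $x\mapsto x^{-1}$ to the recursion $\H\overline{\kappa}=-\delta$ and multiply the $[s,t]$ component by $x^{\rho-1}$. Using additivity $\rho_{sw}+\rho_{wt}=\rho$, the exponents split as $(\rho_{sw}-1)+\rho_{wt}$. Each $\overline{\kappa}_{wt}$ factor becomes $-x^{-1}\cdot x\,\kappa^{\rev}_{wt}/(x-1)$, up to bookkeeping. This turns the relation into one involving $\widetilde{\H}$ and $\kappa^{\rev}=\kappa^{-1}$. Multiplying by $\kappa$ should then recover $\widetilde{\H}\,\overline{\kappa}=-\delta$.

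The main obstacle is this sign and power-of-$x$ bookkeeping, together with the diagonal terms. The diagonal value $-1$ of $\overline{\kappa}$ does not transform like the off-diagonal entries, which produces an extra $\delta$ correction that must cancel exactly against the expansion of $\kappa\kappa^{\rev}=\delta$. If the direct approach becomes messy, I would first try a cleaner route: write $\H$ as a chain sum via Lemma~\ref{lem:inverses-as-sums-over-chains}, obtaining $\H_{st}=\sum_{\text{chains}}\prod\overline{\kappa}_{s_{i-1}s_i}$ since $-\overline{\kappa}$ has diagonal $1$. I would then prove the symmetry by induction on interval length, using the kernel relation to pair up chains.
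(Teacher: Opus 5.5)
The paper gives no proof of this statement; it quotes it from Ferroni--Matherne--Vecchi, so there is no in-paper argument to compare against. Your proof of (i) is correct and complete.

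Your plan for (ii) is also correct, and the bookkeeping you were worried about does close up. Take the $[s,t]$-component of $\H\overline{\kappa}=-\delta$, substitute $x\mapsto x^{-1}$, and multiply by $x^{\rho_{st}-1}$. This gives exactly $x\,\widetilde{\H}\kappa^{\rev}=\widetilde{\H}+(x-1)\kappa^{\rev}$, and the identity holds on the diagonal too. Right multiplication by $\kappa=(\kappa^{\rev})^{-1}$ then gives $\widetilde{\H}(\kappa-x\delta)=(1-x)\delta$, that is, $\widetilde{\H}\,\overline{\kappa}=-\delta$. Uniqueness of inverses finishes the argument.

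One small slip: for $s<w$, the factor $x^{\rho_{wt}}\overline{\kappa}_{wt}(x^{-1})$ equals $-x\kappa^{\rev}_{wt}/(x-1)$, with no extra $x^{-1}$. The $x^{-1}$ appears only in the $w=s$ term, where $\H_{ss}=1$ cannot absorb it.

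The cleanest way to package this is to note that $(x-1)\overline{\kappa}=\kappa-x\delta$ holds on the diagonal as well. Extend rev to Laurent coefficients, with $(p\,a)^{\rev}=p(x^{-1})\,a^{\rev}$. Then $(x-1)\overline{\kappa}^{\rev}=\delta-x\kappa^{\rev}=\kappa^{\rev}(\kappa-x\delta)$, so $\overline{\kappa}^{\rev}=\kappa^{\rev}\overline{\kappa}$. This gives $\H^{\rev}=\H\kappa=x\H+(1-x)\delta$, which is (ii) in one line. The diagonal correction is absorbed automatically, and the chain-pairing fallback is unnecessary.
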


\begin{theorem}[{\cite[Theorem 3.12]{ferroni-matherne-vecchi}}]
\label{thm:unimodal-nonneg}
    Let $\kappa$ be a $(P, \rho)$-kernel. 
    If either the right KLS function $f$ or the left KLS function $g$ are non-negative, 
    then the $\kappa$-Chow function is non-negative and unimodal.
\end{theorem}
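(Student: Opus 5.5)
The plan is to find a factorisation of $\H$ in which the KLS function appears only linearly and every other factor has nonnegative, symmetric, unimodal entries. I treat the case where $g$ is nonnegative; the case of $f$ is the mirror image, using $\kappa=f^{\rev}f^{-1}$ and multiplying on the other side.

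\textbf{Step 1: rewrite $\overline{\kappa}$.} Since $\kappa_{ss}=1$ and $\kappa_{st}$ is divisible by $x-1$, we have
\[
\overline{\kappa}=\frac{\kappa-x\delta}{x-1},
\]
because the diagonal entries give $(1-x)/(x-1)=-1$.

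\textbf{Step 2: factor through $g$.} Substituting $\kappa=g^{-1}g^{\rev}$ gives $\kappa-x\delta=g^{-1}(g^{\rev}-xg)$. Hence
\[
\H=-\overline{\kappa}^{-1}=u^{-1}g,\qquad u:=\frac{xg-g^{\rev}}{x-1}.
\]
Here $u_{ss}=1$. For $s<t$, write $\rho=\rho_{st}$ and $g_{st}=\sum_i g_ix^i$ with $i<\rho/2$. Then
\[
u_{st}=-\psi_{st},\qquad \psi_{st}(x)=\sum_i g_i\,(x^{i+1}+x^{i+2}+\cdots+x^{\rho-i-1}).
\]
If $g\ge 0$, each $\psi_{st}$ is a nonnegative sum of intervals of monomials all centred at $\rho_{st}/2$. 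So $\psi_{st}$ is nonnegative, symmetric about $\rho_{st}/2$, and unimodal.

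\textbf{Step 3: expand along chains.} By Lemma~\ref{lem:inverses-as-sums-over-chains},
\[
\H_{st}=\sum_{s=w_0<w_1<\cdots<w_m\le t}\psi_{w_0w_1}\cdots\psi_{w_{m-1}w_m}\,g_{w_mt}.
\]
Every summand has nonnegative coefficients, which proves nonnegativity immediately. Products of symmetric unimodal nonnegative polynomials are symmetric and unimodal, with centres adding. So each chain product $\Psi_{w}:=\psi_{w_0w_1}\cdots\psi_{w_{m-1}w_m}$ is symmetric unimodal with centre $\rho_{sw_m}/2$.

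\textbf{Step 4: unimodality (the main obstacle).} The factor $g_{w_mt}$ is not symmetric, so each chain summand is only a sum of shifts $x^i\Psi_w$ with centres $\rho_{sw_m}/2+i\le(\rho_{st}-1)/2$. Sums of unimodal polynomials with different centres need not be unimodal, so a term-by-term argument is not enough. I would combine Proposition~\ref{prop:degree-and-symmetry}\eqref{it:chow-symmetry}, which says $\H_{st}$ is symmetric about $(\rho_{st}-1)/2$, with a regrouping of the sum so that each group is symmetric about that same centre.

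Concretely, fix the penultimate element $w=w_m$ and pair the term $x^i\Psi_w$ coming from $g_{wt}$ with the part of $\psi$-chains passing through $w$ and ending at $t$. The decomposition $\psi_{wt}=\sum_i g_i(x^{i+1}+\cdots+x^{\rho_{wt}-i-1})$ and the monomial $x^i$ from $g_{wt}$ together form the interval $x^i+x^{i+1}+\cdots+x^{\rho_{wt}-i-1}$. This interval is symmetric about $(\rho_{wt}-1)/2$ and has nonnegative coefficient $g_i$.

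This suggests rewriting $\H=\Psi'\cdot\widetilde g$, where the chain now ends exactly at $t$. In this form $\widetilde g_{wt}=\sum_i g_i(x^i+\cdots+x^{\rho_{wt}-i-1})$ is symmetric unimodal about $(\rho_{wt}-1)/2$, and the $\psi$-chains on $[s,w]$ are symmetric unimodal about $\rho_{sw}/2$. Each summand is then symmetric unimodal about $(\rho_{st}-1)/2$, and a nonnegative sum of such polynomials is unimodal.

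The delicate bookkeeping is checking that this regrouping reproduces $u^{-1}g$ exactly without overcounting the chains of $u^{-1}$ that stop strictly below $t$. I expect to verify it via the identity $u^{-1}g=u^{-1}(g-u)+\delta$ together with the definition of $u$.
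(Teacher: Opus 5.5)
Your argument is correct and is essentially the route the paper cites, namely the numerical canonical decompositions of Ferroni--Matherne--Vecchi: the identity $\H=\delta+u^{-1}\cdot\frac{g^{\rev}-g}{x-1}$ writes $\H_{st}$ as a nonnegative sum of products of symmetric unimodal polynomials all centred at $(\rho_{st}-1)/2$, where the second factor has zero diagonal and off-diagonal entries $\sum_i g_i(x^i+\cdots+x^{\rho_{wt}-i-1})$. The bookkeeping you flag is immediate, since $g-u=\frac{g^{\rev}-g}{x-1}$ vanishes on the diagonal, so each $\psi$-chain reaching $t$ is absorbed exactly once when you split off its last step.
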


\subsection{Augmented Chow functions and \texorpdfstring{$Z$}{Z}-functions}

Another important class of functions associated to a kernel are the following:

\begin{definition}
    Let $\kappa$ be a $(P,\rho)$-kernel. Consider the following two elements of $\mathcal{I}_{\rho}(P)$:
        \begin{align*}
            F &= \H \cdot f^{\rev},\\
            G &= g^{\rev}\cdot \H.
        \end{align*}
    We call $F$ (resp. $G$) \emph{the right (resp. left) augmented Chow function associated to $\kappa$}. Furthermore, we define the $Z$-function associated to $\kappa$ as the element $Z\in \mathcal{I}_{\rho}(P)$ given by
        \[ Z = g^{\rev}f = gf^{\rev}\]
\end{definition}

The three elements $F$, $G$, and $Z$ are invariant under the rev involution, i.e., $F^{\rev} = F$, $\G^{\rev} = G$, and $Z^{\rev} = Z$. For a detailed study of $Z$-functions we refer to \cite{proudfoot-kls}.

\subsection{Characteristic functions}

Every weakly ranked poset $(P,\rho)$ comes endowed with a canonical kernel. 

\begin{definition}
The \emph{characteristic function} associated with $(P,\rho)$ is the element $\chi\in \mathcal{I}_{\rho}(P)$ defined by
\begin{equation}\label{eq:char-function}
    \chi
    =
    \mu\cdot \upzeta^{\rev}
    =
    \upzeta^{-1}\cdot \upzeta^{\rev}.
\end{equation}
\end{definition}

For each interval $[s,t]$ of $P$, one may express the characteristic function in the following handy form.
\[
    \chi_{st}(x)
    =
    \sum_{s\leq w\leq t}\mu_{sw}\,x^{\rho_{wt}}.
\]
Whenever $P$ is bounded, we shall often refer to the polynomial $\chi_P(x):=\chi_{\widehat{0}\,\widehat{1}}(x)$
as the \emph{characteristic polynomial of $P$}.

It is immediate from the definitions that the \emph{left} KLS function associated to the kernel $\kappa = \chi$ is simply $g = \zeta$. The right KLS function $f$ is a very subtle object, even in the case in which the poset $P$ satisfies highly restrictive properties, such as being a geometric lattice (in fact, this polynomial plays a prominent role in the singular Hodge theory of matroids \cite{elias-proudfoot-wakefield,braden-huh-matherne-proudfoot-wang}).
The Chow function $\H$ associated to $\kappa = \chi$ is called the \emph{characteristic Chow function} or \emph{$\chi$-Chow function} of the weakly ranked poset $(P,\rho)$. 

\subsection{The extended \texorpdfstring{$\aaa\bbb$}{ab}-index}\label{subsec:prelim-ab-index}
Let
\(
    R=\mathbb{Z}[y]\langle \aaa,\bbb\rangle
\)
be the ring of polynomials in the two noncommuting variables $\aaa$ and $\bbb$, with coefficients in $\mathbb{Z}[y]$. In this subsection we regard incidence functions with values in $R$; that is, elements of the $R$-valued incidence algebra~$\mathcal{I}(P;R)$.

We shall assume throughout this subsection that every interval of $P$ is graded with respect to the weak rank function $\rho$.
In particular, every interval $[s,t]$ in $P$ is a graded subposet with respect to the weak rank function $\rho$, and $\rho_{st}$ is the length of a maximal chain in $[s,t]$.

For a chain $\cC=\{ c_1 < \cdots < c_m \} $ and $s\leq t$ in $P$,
define the \emph{weight} $\wt_{st}^\cC(\aaa,\bbb)\in \mathcal{I}(P;R)$ to be $0$ if $\cC$ is not fully contained in the interval $[s,t]$, and otherwise as
\[
    \wt_{st}^\cC(\aaa,\bbb)
    =
    w_1\cdots w_{r-1} \,,\qquad \text{where $r:=\rho_{st}$,}
\]
by declaring
\[
    w_i=
    \begin{cases}
        \bbb & \text{if } i\in\{\rho_{s c_1},\ldots,\rho_{s c_m} \},\\
        \aaa-\bbb & \text{otherwise}.
    \end{cases}
\]
Notice that neither the rank of the lower bound $s$ nor the rank of the upper bound $t$ are recorded in this word.
We shall write~$\wt^\cC(\aaa,\bbb)$ if the interval $[s,t]$ is clear from context.

\begin{definition}
    The $\aaa\bbb$-index of a poset $P$ is the element
    \(
        \Psi\in \mathcal{I}(P;R)
    \)
    defined interval-wise by
    \begin{equation}
    \label{eq:def-ab-index}
        \Psi_{st}(\aaa,\bbb) 
        = \sum_{\cC} \wt_{st}^{\cC}(\aaa,\bbb)
    \end{equation}
    where the sum ranges over all chains $\cC$ in the open interval~$(s,t)=\{w\in P \mid s < w < t\}$.
\end{definition}
Notice again that the ranks of $s$ and $t$ are not recorded in the definition of the $\aaa\bbb$-index.
Recording these ranks corresponds to appending $\aaa$ or $\bbb$ from left or right; see \Cref{rem:ab-index}.
With the usual notation for the flag $h$-vector (see, e.g., \cite[Chapter~3]{stanley-ec1}), we also have
    \[
        \Psi_P(\aaa,\bbb)
        =
        \sum_{S\subseteq[r-1]}\beta_P(S)\operatorname{m}_S(\aaa,\bbb)\,,
    \]
where $\operatorname{m}_S = m_1\ldots m_{r-1}$ is defined by declaring $m_i=\bbb$ if $i\in S$ and $m_i=\aaa$ otherwise.

We now define the extended $\aaa\bbb$-index as (one of the variants of) the $\aaa\bbb$-index extended by a $\mathbb{Z}$-linear transformation $\omega$. 

\begin{definition}
    Let $\omega:\mathbb{Z}\langle \aaa,\bbb\rangle \to \mathbb{Z}[y]\langle \aaa,\bbb\rangle$ be the $\mathbb{Z}$-linear  transformation that replaces all occurrences of $\aaa\bbb$ by $(1+y)\cdot (\aaa\bbb + y\bbb\aaa)$ and all remaining occurrences of $\aaa$ by $\aaa+y\bbb$ and of $\bbb$ by $\bbb+y\aaa$. 
    The \emph{extended $\aaa\bbb$-index} is the element
    \(
        \exapsi\in \mathcal{I}(P;R)
    \)
    defined for every interval $[s,t]$ with $s<t$ by
    \[
        \exapsi_{st}(y,\aaa,\bbb)
        =
        \omega ( \aaa \Psi_{st}(\aaa,\bbb)) \,.
    \] 
    If $s=t$ we set $\exapsi _{ss}(y,\aaa,\bbb)=1$.\\
    If $P$ is bounded, the \emph{extended $\aaa\bbb$-index of $P$} is defined as
    \(
        \exapsi_P(y,\aaa,\bbb)
     :=
        \exapsi_{\widehat{0}\,\widehat{1}}(y,\aaa,\bbb).
    \)
    We further define
    \(
        \expsitilde \in \mathcal{I}(P;R)
    \)
    interval-wise by
    \[
        \expsitilde_{st}(y,\aaa,\bbb)
        :=
        (1+y) \cdot \omega( \Psi_{st}(\aaa,\bbb)) \,,
    \]
    where again we set $\expsitilde _{ss}(y,\aaa,\bbb)=1$ and write
    \(
        \expsitilde_P(y,\aaa,\bbb) := \expsitilde_{\zero, \one}(y,\aaa,\bbb)
    \)
    if $P$ is bounded.
\end{definition}

Let us consider the $\mathbb{Z}[y]$-linear map $\iota$ on $\mathbb{Z}[y]\langle \aaa,\bbb\rangle$ that deletes the leftmost letter in every $\aaa\bbb$-monomial, and with the convention $\iota(1)=1$.
This transformation interacts with the $\omega$-transformation as
\begin{align}
\label{eq:iota-omega}
    \iota \circ \omega = (1+y) \cdot \omega \circ \iota \,.
\end{align}
In particular, 
\(
    \expsitilde_{st}(y,\aaa,\bbb)
    =
    \iota\bigl(\exapsi_{st}(y,\aaa,\bbb)\bigr) \,.
\)

The notation that we use in this paper differs slightly from the literature on the extended $\aaa\bbb$-index such as \cite{hoster-stump-vecchi,dorpalenbarry-maglione-stump,stump}.
As a general rule, we denote
\begin{align}
\label{eq:ex-and-tilde}
    \operatorname{ex} f = \omega (f)
    \quad \text{and} \quad
    \operatorname{ex} \widetilde{f} = (1+y) \cdot \operatorname{ex} f\,,
\end{align}
for $f\in \mathbb{Z}\langle \aaa,\bbb\rangle$ a minor modification of the $\aaa\bbb$-index.

As shown in \cite{hoster-stump-vecchi}, the $\omega$-extended definition of the extended $\aaa\bbb$-index is equivalent to the original \emph{Poincaré-extended} definition due to Dorpalen-Barry--Maglione--Stump in \cite{dorpalenbarry-maglione-stump}.
For every interval $[s,t]$ in a graded poset $P$, define its Poincar\'e polynomial by
\[
    \Poin_{st}(y)
    :=
    \chi^{\rev}_{st}(-y)
    =
    (-y)^{\rho_{st}}\chi_{st}(-y^{-1}).
\]
Consider a chain
\(
    \cC=\{c_1 < \cdots < c_k\}
\)
in the interval $[s,t]$.
Define the \emph{chain Poincar\'e polynomial} associated to $\cC$ by
\[
    \Poin_{st}^\cC(y)
    :=
    \prod_{i=1}^{k}\Poin_{c_i c_{i+1}}(y),
\]
with the convention that $c_{k+1}=t$ and the empty product is equal to $1$.
Note that~$\Poin_{st}^{\{s,t\}}(y) = \Poin_{st}(y)$.
As before, we write $\Poin_P^{\cC}(y) = \Poin_{\zero \one}^\cC(y)$ if $P$ is bounded.

\begin{theorem}
\label{thm:expsi-via-poincare}
    The extended $\aaa\bbb$-index is given by
    \[
        \exapsi_{st}(y,\aaa,\bbb)
        =
        \sum_{\cC \text{ in } [s,t)} \Poin_{st}^{\cC}(y) \ w_0^\cC \wt^{\cC}_{st}(\aaa,\bbb) \,.
    \]
    where the sum ranges over all chains $\cC$ in the half-open interval $[s,t)=\{w\in P \mid s\leq w < t\}$, and where $w_0^\cC = \bbb$ if $s\in \cC$ and $w_0^\cC = \aaa - \bbb$ otherwise.
    Applying $\iota$ yields
    \[
        \expsitilde_{st}(y,\aaa,\bbb)
        =
        \sum_{\substack{\cC \text{ in } [s,t) \\ s\in \cC}} \Poin_{st}^{\cC}(y) \ \wt_{st}^{\cC}(\aaa,\bbb) \,,
    \]
    where the sum ranges over all chains $\cC$ in the open interval $[s,t)$ with $s\in \cC$.
\end{theorem}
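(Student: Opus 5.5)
The plan is to reduce both sides to expansions over chains with weights depending only on the flag $f$-vector of $[s,t]$, and then compare them letter by letter with a small transfer-matrix (automaton) computation. The second formula then follows by applying $\iota$ via \eqref{eq:iota-omega}. Only chains containing $s$ have a leading letter $w_0^\cC=\bbb$; those with $w_0^\cC=\aaa-\bbb$ also start with a letter. Deleting the first letter of each monomial therefore regroups into the stated sum, after accounting for the factor $(1+y)$ in the definition of $\expsitilde$.

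\textbf{Right-hand side.} First I expand each factor as
\[
\Poin_{c_ic_{i+1}}(y)=\sum_{c_i\le w\le c_{i+1}}\mu_{c_iw}(-y)^{\rho_{c_iw}}.
\]
I then rewrite every Möbius value $\mu_{c_iw}$ as a signed sum over chains from $c_i$ to $w$, using Lemma~\ref{lem:inverses-as-sums-over-chains} with $a=\upzeta$. After this, the right-hand side becomes a sum over chains $D$ in $[s,t)$, where each element of $D$ carries a label: it is either an element of $\cC$, or an interior point or endpoint of a ``Möbius segment'' $[c_i,w_i]$. The weight of a labelled chain is a product of local factors, namely a sign, a power of $-y$ for each rank step inside a Möbius segment, and the letters $\bbb$ or $\aaa-\bbb$ at each rank position. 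Summing over all labellings compatible with a fixed $D$ gives a coefficient that factors as a product over the rank positions $0,1,\dots,\rho_{st}-1$. Each factor depends only on whether that rank is occupied by $D$ and on a two-state variable: whether we are currently inside a Möbius segment or inside a zeta segment.

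\textbf{Left-hand side.} Here I write $\aaa\Psi_{st}=\sum_{D}\aaa\,\wt^{D}_{st}$ over chains $D$ of $(s,t)$. It is convenient to also use the flag-$f$ form obtained by substituting $\aaa=\bbb+(\aaa-\bbb)$. Then $\omega$ is applied. Since $\omega$ is not letterwise (it treats the factor $\aaa\bbb$ specially), I encode it as a transducer with two states: ``previous letter was $\aaa$ and is pending'' versus ``no pending $\aaa$''. Both sides then become products of $2\times2$ transfer matrices indexed by rank positions, with entries in $\mathbb{Z}[y]\langle\aaa,\bbb\rangle$. There are two kinds of matrix: one for an occupied rank (an element of the chain sits there) and one for an empty rank. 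Boundary vectors account for the leading $\aaa$ in $\aaa\Psi$ on the left, and for $w_0^\cC$ with the question of whether $s\in\cC$ on the right.

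\textbf{Key step and expected obstacle.} The key step is to show the two transfer systems agree, that is, to exhibit a change of basis of the two-dimensional state space intertwining them. Concretely, the states ``inside a Möbius segment'' and ``inside a zeta segment'' should correspond to ``pending $\aaa$'' and ``no pending $\aaa$''. The identity $\aaa\bbb\mapsto(1+y)(\aaa\bbb+y\bbb\aaa)$ should correspond to the possibility of a Möbius segment starting at one rank and ending at the next. This matrix identity is the main obstacle. The sign bookkeeping from $(-y)^{\rho}$ against the $(-1)^m$ in the Möbius chain expansion has to cancel exactly. I would first check it on the boolean lattice of small rank and on chains, then verify the two $2\times2$ matrix identities directly. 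If the transducer approach gets messy, my fallback is an induction on $\rho_{st}$. Both sides satisfy the same recursion obtained by splitting off the last element of the chain, which on the right means peeling off the final factor $\Poin_{c_kt}(y)$. Using $\Poin=\chi^{\rev}(-y)$ and $\chi=\mu\upzeta^{\rev}$, this recursion becomes a convolution identity in $\mathcal{I}(P;R)$ that can be checked on the left by tracking the last letter under $\omega$.
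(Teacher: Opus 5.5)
Your route differs from the paper's, which does not prove this statement but imports it from \cite{hoster-stump-vecchi}. The reduction you describe is the right one. Both sides become $\sum_{S\subseteq[\rho_{st}-1]}\alpha_{st}(S)\,(\cdots)$, where $\alpha_{st}(S)$ is the number of chains of $(s,t)$ with rank set $S$. The coefficient of $\alpha_{st}(S)$ is computed by a two-state transfer system on the right and by the pending-$\aaa$ transducer for $\omega$ on the left.

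The gap is that the proposal stops exactly at the only nontrivial step. The intertwiner is announced, not exhibited. Your heuristic for it, matching Möbius/zeta states with pending/non-pending states, is not what happens. Because the transducer emits its output one letter late, the intertwiner has letters among its entries and mixes the states. In the normalization below, the row of the pending state is forced by the initial vectors to be $(\aaa,\,-y\bbb)$.

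Two bookkeeping points also need fixing. First, the auxiliary chain $D$ lives in $[s,t]$, not $[s,t)$. The summand $w=t$ of $\Poin_{c_kt}(y)=\sum_{c_k\le w\le t}\mu_{c_kw}(-y)^{\rho_{c_kw}}$ makes $t$ the top of a Möbius chain. Dropping it already fails in rank $1$, where this term is the $y\bbb$ in $\omega(\aaa)=\aaa+y\bbb$. Second, for the formula for $\expsitilde_{st}$, the operative fact is $\iota\bigl((\aaa-\bbb)p\bigr)=0$, so chains avoiding $s$ simply vanish. Meanwhile $\iota\circ\omega=(1+y)\,\omega\circ\iota$ gives $\iota(\exapsi_{st})=\expsitilde_{st}$, with no further factor to track.

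To close the gap, attach the sign $-1$ of each step of a Möbius chain to its upper element, and a factor $-y$ to each rank step inside a Möbius segment. Order the states as (outside, inside a Möbius segment). Then the coefficient of $\alpha_{st}(S)$ on the right-hand side is $v_0T_1\cdots T_{\rho_{st}-1}\,e$, where $T_p=T^\bullet$ if $p\in S$ and $T_p=T^\circ$ otherwise, and
\[
T^\circ=\begin{pmatrix}\aaa-\bbb & 0\\ 0 & -y(\aaa-\bbb)\end{pmatrix},\quad
T^\bullet=\begin{pmatrix}\bbb & -y\bbb\\ -\aaa & y\aaa\end{pmatrix},\quad
v_0=(\aaa,\,-y\bbb),\quad
e=\begin{pmatrix}1\\ -1\end{pmatrix}.
\]
Here the entry $-\aaa=-\bbb-(\aaa-\bbb)$ sums over whether an element met inside a Möbius segment lies in $\cC$. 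The vector $v_0$ sums over $s\in\cC$ or not, and the $-1$ in $e$ is the case $t\in D$.

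On the left, $\omega(\aaa u_1\cdots u_{\rho_{st}-1})=(0,1)\,\Omega_{u_1}\cdots\Omega_{u_{\rho_{st}-1}}\,f$. Here $u_p=\bbb$ for $p\in S$ and $u_p=\aaa-\bbb$ otherwise, and $\Omega_{\aaa-\bbb}=\Omega_\aaa-\Omega_\bbb$. With states ordered as (no pending $\aaa$, pending $\aaa$),
\[
\Omega_\aaa=\begin{pmatrix}0 & 1\\ 0 & \aaa+y\bbb\end{pmatrix},\quad
\Omega_\bbb=\begin{pmatrix}\bbb+y\aaa & 0\\ (1+y)(\aaa\bbb+y\bbb\aaa) & 0\end{pmatrix},\quad
f=\begin{pmatrix}1\\ \aaa+y\bbb\end{pmatrix}.
\]
Take $P=\begin{pmatrix}(1+y)^{-1} & -y(1+y)^{-1}\\ \aaa & -y\bbb\end{pmatrix}$. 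A direct check shows $(0,1)P=v_0$, $Pe=f$, $\Omega_\bbb P=PT^\bullet$ and $(\Omega_\aaa-\Omega_\bbb)P=PT^\circ$. This proves the first formula; working over $\mathbb{Z}[y,(1+y)^{-1}]$ is harmless.

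Moreover, $T^\bullet=\begin{pmatrix}\bbb\\ -\aaa\end{pmatrix}(1,\,-y)$ is a column times a row. So the right-hand side factors over the gaps between occupied ranks into blocks
\[
(1,\,-y)(T^\circ)^k\begin{pmatrix}\bbb\\ -\aaa\end{pmatrix}=(\aaa-\bbb)^k\bigl(\bbb-(-y)^{k+1}\aaa\bigr)=\omega\bigl((\aaa-\bbb)^k\bbb\bigr).
\]
This is the lemma from \cite{hoster-stump-vecchi} that the paper invokes in the proof of Theorem~\ref{thm:expsi-to-dualchow}, which ties your computation to the cited source.
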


The following result due to Stump \cite{stump} shows that extended $\aaa\bbb$-index specializes to the $\chi$-Chow function and the left augmented~$\chi$-Chow function in a very nice way.

\begin{theorem}\label{thm:stump}
    Let $(P,\rho)$ be a weakly ranked poset. For every $s\leq t$ the following holds:
        \begin{align*} \expsitilde_{st}(-x,1,x) &= (1-x)^{\rho_{st}}\,\H_{st}(x)\\
        \exapsi_{st}(-x,1,x) &= (1-x)^{\rho_{st}}\,G_{st}(x)\\        \end{align*}
\end{theorem}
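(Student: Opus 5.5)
The plan is to derive both identities directly from the chain expansion in Theorem~\ref{thm:expsi-via-poincare}, and then to match the result with the chain expansion of $\H=-(\overline{\chi})^{-1}$ given by Lemma~\ref{lem:inverses-as-sums-over-chains}. Here $\kappa=\chi$ throughout.

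\textbf{Specializing the chain sum.} First I specialize at $y=-x$, $\aaa=1$, $\bbb=x$. Then $\aaa-\bbb=1-x$, and
\[
\Poin_{uv}(-x)=x^{\rho_{uv}}\chi_{uv}(x^{-1})=\chi^{\rev}_{uv}(x).
\]
For a chain $s=c_1<\cdots<c_k<t$, set $c_{k+1}=t$. The word $\wt^{\cC}_{st}(1,x)$ has $r-1$ letters, where $r=\rho_{st}$. Exactly $k-1$ of them equal $x$, and the rest equal $1-x$. The second formula of Theorem~\ref{thm:expsi-via-poincare} therefore becomes
\[
(1-x)^{-r}\expsitilde_{st}(-x,1,x)=\frac{1}{x}\sum_{s=c_1<\cdots<c_{k+1}=t}\ \prod_{i=1}^{k} x\,v_{c_ic_{i+1}}, \qquad v_{uv}:=\frac{\chi^{\rev}_{uv}(x)}{1-x}\quad (u<v).
\]
The sum over chains is exactly the $(s,t)$ entry of $(\delta-xv)^{-1}$ for $s<t$, where $v$ is extended by $0$ on the diagonal. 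On the other side, put $u_{ab}:=\chi_{ab}(x)/(x-1)$ for $a<b$, and $0$ on the diagonal. Then $-\overline{\chi}=\delta-u$, so Lemma~\ref{lem:inverses-as-sums-over-chains} gives $\H=(\delta-u)^{-1}$. The first identity is thus equivalent to
\[
(\delta-xv)^{-1}-\delta \;=\; x\bigl((\delta-u)^{-1}-\delta\bigr).
\]

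\textbf{The algebraic identity (main step).} This is where the kernel property enters. Write $\chi=\delta+(x-1)u$ and $\chi^{\rev}=\delta+(1-x)v$. The relation $\chi\,\chi^{\rev}=\delta$ expands to
\[
u-v=(x-1)\,uv,
\]
and $\chi^{\rev}\chi=\delta$ gives the same with $u$ and $v$ in the other order. I would use these two relations to check the identity directly: multiply $\delta+x\bigl((\delta-u)^{-1}-\delta\bigr)$ on the left by $(\delta-xv)$ and show that the result is $\delta$. Concretely, write $(\delta-u)^{-1}=\delta+u(\delta-u)^{-1}$ and substitute $xvu=xu-xv-\dots$ from the kernel relation until everything cancels. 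I expect this bookkeeping, and keeping track of which one-sided relation is needed, to be the main obstacle. If the direct manipulation becomes messy, I would first try verifying that both sides satisfy the same triangular recursion in the incidence algebra.

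\textbf{The augmented identity.} For the second identity I use the first formula of Theorem~\ref{thm:expsi-via-poincare}. Chains containing $s$ contribute $\bbb=x$ times the previous computation; strictly, they contribute $x$ times the $[s,t]$ term, equivalently the term $w=s$ of the sum below. Chains whose minimum is $c_1=w>s$ contribute the following. The letter $w_0=1-x$ together with the positions strictly below $\rho_{sw}$ give $(1-x)^{\rho_{sw}}$. Grouping with the remaining chain in $[w,t)$, the $w$-term reproduces $(1-x)^{\rho_{wt}}$ times the first identity on $[w,t]$. After checking the letter at position $\rho_{sw}$, the total is
\[
(1-x)^{-r}\exapsi_{st}(-x,1,x)=\sum_{s\le w\le t} x^{\rho_{sw}}\,\H_{wt}(x),
\]
with the convention $\H_{tt}=1$ handled by the empty-chain term. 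The right-hand side is $(\upzeta^{\rev}\H)_{st}$. Since the left KLS function of $\chi$ is $g=\upzeta$, this equals $(g^{\rev}\H)_{st}=G_{st}$, as required.
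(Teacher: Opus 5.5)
The paper gives no proof of this statement; it is quoted from Stump. So I assess your argument on its own terms. Your treatment of the first identity is sound, but the augmented identity has a genuine gap.

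\textbf{First identity.} The specialization of the chain formula is right. So is the identification of the chain sum with $\bigl((\delta-xv)^{-1}\bigr)_{st}$, and the reduction to $(\delta-xv)^{-1}-\delta=x\bigl((\delta-u)^{-1}-\delta\bigr)$. The identity does hold, and the bookkeeping you feared takes two lines. Write $\H=(\delta-u)^{-1}$, so that $\H-\delta=u\H$. Then
\[
(\delta-xv)(\delta+xu\H)=\delta-xv+x(u-xvu)\H .
\]
The relation $u-v=(x-1)vu$, coming from $\chi^{\rev}\chi=\delta$, says exactly $u-xvu=v(\delta-u)$. Hence the right-hand side equals $\delta$.

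There is an even quicker route. Entrywise $xv=u^{\rev}$, so $\delta-xv=(\delta-u)^{\rev}$ and $(\delta-xv)^{-1}=\H^{\rev}$. Your first identity is then literally the palindromicity $\H^{\rev}_{st}=x\H_{st}$ of Proposition~\ref{prop:degree-and-symmetry}(ii).

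\textbf{Augmented identity: the gap.} Group chains by their minimum, as you do. The contributions to $(1-x)^{-\rho_{st}}\exapsi_{st}(-x,1,x)$ are as follows.

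Chains containing $s$ contribute $x\H_{st}$. You say this yourself, but it is not the $w=s$ term $\H_{st}$ of your sum.

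A chain with minimum $w\in(s,t)$ has $w_0,\dots,w_{\rho_{sw}-1}$ all equal to $1-x$. Only the single letter in position $\rho_{sw}$ equals $\bbb=x$. So these chains contribute $x\H_{wt}$; there is no factor $x^{\rho_{sw}}$.

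The empty chain contributes $1$, not $x^{\rho_{st}}$.

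Hence your computation actually yields
\[
(1-x)^{-\rho_{st}}\exapsi_{st}(-x,1,x)=1+x\sum_{s\le w<t}\H_{wt}(x)=(\upzeta\,\H^{\rev})_{st}=(G^{\rev})_{st},
\]
and not $(\upzeta^{\rev}\H)_{st}=G_{st}$. The two sums differ term by term. For a $3$-element chain the pieces are $x(1+x),\,x,\,1$ against $1+x,\,x,\,x^2$. They agree only as totals, and that requires an argument you do not supply.

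\textbf{How to close it.} The missing ingredient is the rev-invariance of $G$. Directly,
\[
\chi\H=\bigl((\delta-u)+xu\bigr)\H=\delta+x(\H-\delta)=\H^{\rev},
\]
where the last equality is palindromicity, i.e.\ your first part combined with $xv=u^{\rev}$. Therefore
\[
\upzeta\H^{\rev}=\upzeta\mu\upzeta^{\rev}\H=\upzeta^{\rev}\H=G .
\]

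Structurally, at $\aaa=1,\bbb=x$ the Poincar\'e factors produce $\chi^{\rev}$ rather than $\chi$. Both identities therefore come out in reversed form, and a symmetry statement cannot be avoided. This differs from the dual specialization in Theorem~\ref{thm:expsi-to-dualchow}, where the chains match the kernel term by term.
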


\begin{remark}\label{rem:ab-index}
    Using the $\aaa\bbb$-index as we define it here we can straightforward create variants with restrictions to the bounds $s$ and $t$.
    We will later focus on the following three variants of the $\aaa\bbb$-index:
    \begin{enumerate}[leftmargin=2.5cm]
        \item[$\Psi_{st}(\aaa,\bbb)$] is
        the $\aaa\bbb$-index as defined in \eqref{eq:def-ab-index};
        \item[$\aaa\Psi_{st}(\aaa,\bbb)$]
        records the rank of $s$ in the weight of a chain $\cC$ in the half-open interval $[s,t)$ but $\cC$ may or may not contain the element $s$; and
        \item[$\Psi_{st}(\aaa,\bbb)\bbb$]
        records the rank of the endpoint $t$ of chains $\cC \cup \{t\}$ in the half-open interval~$(s,t]$ ending at $t$\,.
    \end{enumerate}
\end{remark}

\section{Dual Chow functions for posets}

\subsection{Dual Chow functions for arbitrary kernels}

For an element $a\in\mathcal{I}_{\rho}(P)$, define its \emph{sign twist} $a^{\sgn}$ by
\[
    \left(a^{\sgn}\right)_{st}(x)
    =
    (-1)^{\rho_{st}}a_{st}(x).
\]
Since the weak rank function is additive on intervals, the sign twist is an algebra automorphism of $\mathcal{I}_{\rho}(P)$; in particular,
\[
    (ab)^{\sgn}=a^{\sgn}b^{\sgn},
    \qquad
    (a^{-1})^{\sgn}=(a^{\sgn})^{-1}.
\]
It is also immediate that the sign twist commutes with the rev involution:
\[
    (a^{\sgn})^{\rev}=(a^{\rev})^{\sgn}.
\]

If an element $\kappa\in \mathcal{I}_{\rho}(P)$ is a $(P,\rho)$-kernel, then so is the element $(\kappa^{\rev})^{\sgn}$. In particular, we can consider the Chow function associated to $(\kappa^{\rev})^{\sgn}$.

\begin{definition}
    Let $\kappa$ be a $(P,\rho)$-kernel on a poset of rank $r$. The \emph{dual Chow function} associated to $\kappa$, denoted by $\H^*$, is the Chow function associated with the kernel $(\kappa^{\rev})^{\sgn}$. If $P$ is bounded, we call 
    \[\H^*_{P}(x) = \H^*_{\widehat{0}\widehat{1}}(x)\]
    the \emph{dual $\kappa$-Chow polynomial} of $P$.
\end{definition}

Note that in many important cases in mathematics, one deals with a kernel $\kappa$ satisfying
\begin{equation}\label{eq:sym-kernel}
    \kappa_{st}(x)
    =
    (-1)^{\rho_{st}}\,x^{\rho_{st}}\kappa_{st}(x^{-1})
    =
    (-1)^{\rho_{st}}\kappa^{\rev}_{st}(x),
    \qquad \text{for each $s\leq t$.}
\end{equation}
Two fundamentally important examples in which this happens are the following:
\begin{itemize}
    \item When $P$ is an Eulerian poset and $\kappa = \varepsilon$ is the Eulerian kernel, given by $\varepsilon_{st} = (x-1)^{\rho_{st}}$ for each $s\leq t$.
    \item When $P$ is the Bruhat poset on a Coxeter group and the kernel $\kappa$ is given by the $R$-polynomials.  
\end{itemize}
The condition in \eqref{eq:sym-kernel} is equivalent to
\(
    \kappa^{\rev}=\kappa^{\sgn}.
\)
Hence
\[
    (\kappa^{\rev})^{\sgn}=\kappa.
\]
Thus, for skew-symmetric kernels such as the Eulerian kernel or the $R$-polynomial kernel, the Chow function and the dual Chow function agree. Note, however, that for the characteristic kernel $\kappa=\chi$, the condition in \eqref{eq:sym-kernel} does not hold. In particular, for characteristic Chow functions, the dual Chow function and the ordinary Chow function are substantially different.

\begin{remark}
    Consider a weakly ranked poset $(P,\rho)$ and its dual $(P^*,\rho^*)$, where $P^*$ is obtained by reversing the comparability relations in $P$ and $\rho^*_{ts} = \rho_{st}$ for each $s\leq t$ in $P$. The kernel $\kappa$ induces a kernel $\kappa^*$ in $(P^*,\rho^*)$ by defining $\kappa^*_{ts} = \kappa_{st}$ for each $s\leq t$ in $P$. The reader should not confuse the Chow $\kappa^*$-Chow function of $(P^*,\rho^*)$ with the  dual Chow function $\H^*$ associated to~$\kappa$ in $(P,\rho)$. These objects are not immediately related. 
\end{remark}

\subsection{Dual Chow functions and KLS functions}

We will henceforth assume that $\kappa$ is a $(P,\rho)$-kernel, with Chow function $\H$, right and left KLS functions $f$ and $g$, and $Z$-function $Z$. Correspondingly, $\H^*$ is the dual Chow function associated to $\kappa$, i.e., the Chow function associated to $(\kappa^{\rev})^{\sgn}$; also, $f^*$ and $g^*$ denote the right and left KLS functions, and $Z^*$ denotes the $Z$-function of $(\kappa^{\rev})^{\sgn}$.

\begin{proposition}\label{prop:kls-and-zeta-of-rev-kernel}
    Let $\kappa$ be a $(P,\rho)$-kernel. The following holds:
    \[ f^* = (g^{-1})^{\sgn}\, \qquad \text{ and } \qquad g^* = (f^{-1})^{\sgn}.\]
    In particular $Z^*=(Z^{-1})^{\sgn}$.
\end{proposition}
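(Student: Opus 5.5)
The plan is to exhibit the candidates $(g^{-1})^{\sgn}$ and $(f^{-1})^{\sgn}$ and check that they satisfy the three properties of Theorem~\ref{thm:kls-functions} for the kernel $\kappa^*:=(\kappa^{\rev})^{\sgn}$. Uniqueness in that theorem then identifies them with $f^*$ and $g^*$.

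First I verify the factorization property (iii). From $\kappa=g^{-1}g^{\rev}$ and the fact that $\rev$ is an algebra involution commuting with inverses, we get
\[
\kappa^{\rev}=(g^{-1})^{\rev}g=\bigl((g^{-1})^{\rev}\bigr)\bigl((g^{-1})\bigr)^{-1}.
\]
Applying the automorphism $\sgn$, which commutes with $\rev$ and with inverses, gives
\[
\kappa^*=\bigl((g^{-1})^{\sgn}\bigr)^{\rev}\bigl((g^{-1})^{\sgn}\bigr)^{-1}.
\]
This is exactly the right-KLS factorization $\kappa^*=\tilde f^{\rev}\tilde f^{-1}$ with $\tilde f=(g^{-1})^{\sgn}$. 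Symmetrically, $\kappa=f^{\rev}f^{-1}$ gives $\kappa^{\rev}=f\,(f^{-1})^{\rev}=\bigl(f^{-1}\bigr)^{-1}(f^{-1})^{\rev}$. Applying $\sgn$ yields $\kappa^*=\tilde g^{-1}\tilde g^{\rev}$ with $\tilde g=(f^{-1})^{\sgn}$.

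Next I check the normalization and degree conditions (i) and (ii). Property (i) is immediate: $g_{ss}=1$ forces $(g^{-1})_{ss}=1$, and the sign twist is trivial on diagonal entries since $\rho_{ss}=0$. The degree bound is the only point requiring an argument. I use Lemma~\ref{lem:inverses-as-sums-over-chains}, which writes $(g^{-1})_{st}$ as a signed sum over chains $s=s_0<\cdots<s_m=t$ of products $g_{s_0s_1}\cdots g_{s_{m-1}s_m}$. Each factor has degree $<\tfrac12\rho_{s_{i-1}s_i}$, and additivity of $\rho$ then gives each product degree $<\tfrac12\rho_{st}$. A sign twist does not change degrees, and all these elements lie in $\mathcal{I}_\rho(P)$. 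Uniqueness in Theorem~\ref{thm:kls-functions} then gives $f^*=(g^{-1})^{\sgn}$ and $g^*=(f^{-1})^{\sgn}$.

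Finally, for the $Z$-function I compute
\[
Z^*=(g^*)^{\rev}f^*=\bigl((f^{-1})^{\rev}g^{-1}\bigr)^{\sgn}=\bigl((g\,f^{\rev})^{-1}\bigr)^{\sgn}=(Z^{-1})^{\sgn},
\]
using $Z=gf^{\rev}$. The main obstacle is only the bookkeeping: $\rev$ is multiplicative in the same order and not anti-multiplicative, so inverting a product reverses its factors. I will be careful that the left and right roles genuinely swap at that step.
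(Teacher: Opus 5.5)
Your proof is correct and follows the same route as the paper. You rewrite $(\kappa^{\rev})^{\sgn}$ using $\kappa=g^{-1}g^{\rev}$ (resp.\ $\kappa=f^{\rev}f^{-1}$), check normalization and the degree bound, invoke uniqueness, and then compute $Z^*=\bigl((gf^{\rev})^{-1}\bigr)^{\sgn}$. The differences are cosmetic: you get the degree bound on $g^{-1}$ from the chain expansion of Lemma~\ref{lem:inverses-as-sums-over-chains} rather than by induction on $(g^{-1})_{st}=-\sum_{s\leq w<t}(g^{-1})_{sw}g_{wt}$, and you write out the $g^*$ case that the paper only calls ``very similar.''
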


\begin{proof}
    Let us prove only that $f^*=(g^{-1})^{\sgn}$ as the other equality is very similar. Note that $g$ satisfies $\kappa=g^{-1}g^{\rev}$. In particular,
    \[
        (\kappa^{\rev})^{\sgn}
        =
        \left((g^{-1})^{\rev}g\right)^{\sgn}
        =
        \left((g^{-1})^{\sgn}\right)^{\rev}
        \left((g^{-1})^{\sgn}\right)^{-1}.
    \]
    The degree bounds for $(g^{-1})^{\sgn}$ can be proved with a straightforward induction: from $g^{-1}g=\delta$ one writes
    \[
        (g^{-1})_{st}(x)=-\sum_{s\leq w<t}(g^{-1})_{sw}(x)g_{wt}(x),
    \]
    and uses the degree bounds for $g$. Hence $(g^{-1})^{\sgn}$ is the right KLS function of $(\kappa^{\rev})^{\sgn}$.

    The conclusion about the $Z$-function is direct from the formulas above:
    \[
        Z^*=(g^*)^{\rev}f^*=((f^{\rev})^{-1}g^{-1})^{\sgn}=(Z^{-1})^{\sgn}.\qedhere
    \]
\end{proof}

\subsection{Dual augmented Chow functions}

Let us denote by $F$ and $G$ the right and left augmented Chow functions associated to $\kappa$, and let us denote by $\F^*$ and $\G^*$ the right and left augmented Chow functions associated to $(\kappa^{\rev})^{\sgn}$. We will call $\F^*$ the \emph{dual right augmented Chow function} and $\G^*$ the \emph{dual left augmented Chow function} associated to $\kappa$.

\begin{proposition}\label{prop:augmented-chow-of-rev-kernel}
    The dual augmented Chow functions satisfy the identities
    \[
        \F^*\G^{\sgn}=\H^*\H^{\sgn},
        \qquad
        \F^{\sgn}\G^*=\H^{\sgn}\H^*.
    \]
    Equivalently,
    \[
        \F^*=\H^*\H^{\sgn}(\G^{\sgn})^{-1},
        \qquad
        \G^*=(F^{\sgn})^{-1}\H^{\sgn}\H^*.
    \]
\end{proposition}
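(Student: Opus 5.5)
The plan is to unwind the definitions of the augmented Chow functions and then apply Proposition~\ref{prop:kls-and-zeta-of-rev-kernel}. Everything reduces to a formal computation in $\mathcal{I}_{\rho}(P)$. It uses only three facts, all noted earlier in the paper:
\begin{itemize}
\item the sign twist is an algebra automorphism;
\item the sign twist commutes with the rev involution;
\item rev is multiplicative and commutes with inversion, so that $(a^{-1})^{\rev}=(a^{\rev})^{-1}$.
\end{itemize}

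For the first identity, write $\F^*=\H^*(f^*)^{\rev}$ by definition of the right augmented Chow function of the kernel $(\kappa^{\rev})^{\sgn}$. Write also $\G^{\sgn}=(g^{\rev})^{\sgn}\H^{\sgn}$ by applying the sign twist to $\G=g^{\rev}\H$. By Proposition~\ref{prop:kls-and-zeta-of-rev-kernel}, $f^*=(g^{-1})^{\sgn}$, so $(f^*)^{\rev}=((g^{\rev})^{-1})^{\sgn}$. Substituting gives
\[
\F^*\G^{\sgn}=\H^*\,((g^{\rev})^{-1})^{\sgn}(g^{\rev})^{\sgn}\,\H^{\sgn}=\H^*\,\bigl((g^{\rev})^{-1}g^{\rev}\bigr)^{\sgn}\H^{\sgn}=\H^*\H^{\sgn}.
\]

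The second identity is symmetric. We have $\F^{\sgn}=\H^{\sgn}(f^{\rev})^{\sgn}$ and $\G^*=(g^*)^{\rev}\H^*$. Proposition~\ref{prop:kls-and-zeta-of-rev-kernel} gives $g^*=(f^{-1})^{\sgn}$, hence $(g^*)^{\rev}=((f^{\rev})^{-1})^{\sgn}$. The middle factors then cancel, leaving $\H^{\sgn}\H^*$.

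For the equivalent reformulations, I need $\G^{\sgn}$ and $\F^{\sgn}$ to be invertible in $\mathcal{I}_{\rho}(P)$. Since the sign twist preserves invertibility, it suffices to check the diagonal entries of $\G$ and $\F$.
\begin{itemize}
\item $\H_{ss}=-\bigl((\overline{\kappa})^{-1}\bigr)_{ss}=-(-1)^{-1}=1$.
\item $g_{ss}=f_{ss}=1$.
\item Hence $\G_{ss}=\F_{ss}=1$, and the invertibility criterion applies.
\end{itemize}
Multiplying the first identity on the right by $(\G^{\sgn})^{-1}$ and the second on the left by $(\F^{\sgn})^{-1}$ gives the stated formulas.

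The only point requiring any care is the bookkeeping of operator order. One must use $(g^{-1})^{\rev}=(g^{\rev})^{-1}$ and the commutation of sgn with rev so that the cancelling factors appear adjacent and in the correct order; the noncommutativity of $\mathcal{I}(P)$ forbids any other rearrangement. There is no genuine obstacle beyond this.
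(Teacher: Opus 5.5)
Your proof is correct and is essentially the paper's argument: both rest on $f^*=(g^{-1})^{\sgn}$ and $g^*=(f^{-1})^{\sgn}$, together with the compatibility of $\sgn$ with $\rev$ and with inversion. The only difference is the order of steps: the paper substitutes $(g^{\rev})^{-1}=\H\G^{-1}$ to reach the solved form $\F^*=\H^*\H^{\sgn}(\G^{\sgn})^{-1}$ first, whereas you cancel directly to get the product form, then invert after checking the unit diagonals of $\F$ and $\G$, a check the paper leaves implicit.
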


\begin{proof}
    By Proposition~\ref{prop:kls-and-zeta-of-rev-kernel}, we have $f^*=(g^{-1})^{\sgn}$ and $g^*=(f^{-1})^{\sgn}$. Thus, by definition of the right dual augmented Chow function,
    \[
        \F^*=\H^*(f^*)^{\rev}=\H^*((g^{\rev})^{-1})^{\sgn}.
    \]
    On the other hand, since $\G=g^{\rev}\H$, we have $(g^{\rev})^{-1}=\H \G^{-1}$. Hence
    \[
        \F^*=\H^*\H^{\sgn}(\G^{\sgn})^{-1},
    \]
    which is equivalent to $\F^*\G^{\sgn}=\H^*\H^{\sgn}$.

    Similarly,
    \[
        \G^*=(g^*)^{\rev}\H^*=((f^{\rev})^{-1})^{\sgn}\H^*.
    \]
    Since $F=\H f^{\rev}$, we have $(f^{\rev})^{-1}=F^{-1}\H$. Therefore $\G^*=(F^{\sgn})^{-1}\H^{\sgn}\H^*$, which is equivalent to $F^{\sgn}\G^*=\H^{\sgn}\H^*$.
\end{proof}

\subsection{Sign twists and unimodality for (dual) Chow functions}

We shall use the following consequence of the canonical decompositions for Chow functions proved in \cite{ferroni-matherne-vecchi}: if a kernel has either its right or left KLS function with nonnegative coefficients on every interval, then its Chow function has nonnegative and unimodal coefficients on every interval.

\begin{theorem}\label{thm:sign-twist-unimodality}
    Let $\kappa$ be a $(P,\rho)$-kernel. If either $f^{\sgn}$ or $g^{\sgn}$ has nonnegative coefficients on every interval, then the sign twisted Chow function $\H^{\sgn}$ has nonnegative and unimodal coefficients on every interval.
\end{theorem}

\begin{proof}
    Consider the sign-twisted kernel $\kappa^{\sgn}$. Since the sign twist is an algebra automorphism and commutes with rev, $\kappa^{\sgn}$ is again a $(P,\rho)$-kernel. The right and left KLS functions of $\kappa^{\sgn}$ are $f^{\sgn}$ and $g^{\sgn}$, respectively, by uniqueness of KLS functions. Also, since
    \[
        \overline{\kappa^{\sgn}}=(\overline{\kappa})^{\sgn},
    \]
    the Chow function of $\kappa^{\sgn}$ is $\H^{\sgn}$. If either $f^{\sgn}$ or $g^{\sgn}$ is nonnegative, the nonnegativity and unimodality criterion for Chow functions, \Cref{thm:unimodal-nonneg}, applied to the kernel $\kappa^{\sgn}$ proves the result.
\end{proof}

\begin{corollary}\label{cor:dual-sign-twist-unimodality}
    Let $\kappa$ be a $(P,\rho)$-kernel, and let $\H^*$ be its dual Chow function. Let $f^*$ and $g^*$ be the right and left KLS functions of the kernel $(\kappa^{\rev})^{\sgn}$. If either $f^*$ or $g^*$ has nonnegative coefficients on every interval, then $\H^*$ has nonnegative and unimodal coefficients on every interval.
\end{corollary}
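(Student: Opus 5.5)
The plan is to apply the ordinary criterion, Theorem~\ref{thm:unimodal-nonneg}, directly to the kernel $\kappa^* := (\kappa^{\rev})^{\sgn}$. No additional sign twisting is needed.

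First, I will recall that $(\kappa^{\rev})^{\sgn}$ is again a $(P,\rho)$-kernel. This was noted just before the definition of the dual Chow function. It follows because $\sgn$ is an algebra automorphism of $\mathcal{I}_{\rho}(P)$ that commutes with $\rev$, and $\rev$ is an involution compatible with convolution. Concretely,
\[
\bigl((\kappa^{\rev})^{\sgn}\bigr)^{-1}=\bigl((\kappa^{-1})^{\rev}\bigr)^{\sgn}=\bigl((\kappa^{\rev})^{\rev}\bigr)^{\sgn}=\bigl(\bigl((\kappa^{\rev})^{\sgn}\bigr)^{\rev}\bigr).
\]
Also, the diagonal entries are $x^0\kappa_{ss}(x^{-1})=1$, and the degree bounds are preserved by $\rev$ on $\mathcal{I}_\rho(P)$.

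Second, by definition, $\H^*$ is the Chow function of $\kappa^*$. By the notation fixed at the start of the subsection on dual Chow functions and KLS functions, $f^*$ and $g^*$ are precisely the right and left KLS functions of $\kappa^*$, which exist and are unique by Theorem~\ref{thm:kls-functions}. So the hypothesis of the corollary says exactly that one of the KLS functions of the kernel $\kappa^*$ is nonnegative on every interval.

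Third, I will invoke Theorem~\ref{thm:unimodal-nonneg} for $\kappa^*$. It gives that the $\kappa^*$-Chow function, which is $\H^*$, is nonnegative and unimodal on every interval.

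There is no real obstacle here. The only point requiring care is bookkeeping: the KLS functions of $\kappa^*$ must be exactly $f^*, g^*$, without an extra sign twist, so one should not route the argument through Theorem~\ref{thm:sign-twist-unimodality}. If one prefers that route anyway, Theorem~\ref{thm:sign-twist-unimodality} can be applied to the kernel $\kappa^{\rev}$. By uniqueness its KLS functions are $(f^*)^{\sgn}$ and $(g^*)^{\sgn}$, and its Chow function is $(\H^*)^{\sgn}$. Sign-twisting then gives the same conclusion. Proposition~\ref{prop:kls-and-zeta-of-rev-kernel} additionally lets one restate the hypothesis as nonnegativity of $(g^{-1})^{\sgn}$ or $(f^{-1})^{\sgn}$.
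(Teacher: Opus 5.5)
Your proposal is correct and is essentially the paper's own argument: the paper proves the corollary in one line by applying Theorem~\ref{thm:unimodal-nonneg} to the kernel $(\kappa^{\rev})^{\sgn}$, whose Chow function is $\H^*$ by definition and whose KLS functions are $f^*$ and $g^*$. Your explicit check that $(\kappa^{\rev})^{\sgn}$ is again a kernel is a useful detail that the paper only asserts.
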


\begin{proof}
    This is the nonnegativity and unimodality criterion for Chow functions applied to the kernel $(\kappa^{\rev})^{\sgn}$.
\end{proof}

\section{Dual Chow polynomials for the characteristic function}

In this section we shall be concerned only with the case in which $\kappa = \chi$ is the characteristic function of $(P,\rho)$, and all the Chow functions, dual Chow functions, and KLS functions alluded to in this section refer only to this case. For the sake of a cleaner presentation, we will compute dual Chow polynomials of posets rather than dual Chow functions; there is no difficulty in translating our statements to the full dual Chow function. One important caveat is that, unlike~$\chi$, the kernel~$(\chi^{\rev})^{\sgn}$ can be degenerate. 

Note that the dual right KLS function associated to $(\chi^{\rev})^{\sgn}$ is the sign-twisted M\"obius function $\mu^{\sgn}$. The \emph{left} KLS function associated to $(\chi^{\rev})^{\sgn}$ is $g^* = (f^{-1})^{\sgn}$, where $f$ is the right KLS function associated to $\chi$. The objects $f$ and $Z$ are notoriously difficult, and have been studied in the case in which the poset in question is a matroid (see \cite{gao-xie,gao-xie-yang,braden-huh-matherne-proudfoot-wang,gao-ruan-xie,braden-ferroni-matherne-nepal}). 

\subsection{Non-recursive formulas, nonnegativity and unimodality}
The following formula provides a ``dual'' of the Feichtner--Yuzvinsky formula for Chow polynomials.

\begin{theorem}\label{thm:iterative}
    The dual Chow polynomial of a weakly ranked bounded poset $(P,\rho)$ of rank $r$ can be computed as follows:
    \[
        \H^*_P(x)
        =
        (-1)^r
        \sum_{\widehat{0}\leq c_0 < c_1 < \cdots < c_m = \widehat{1}}
        \mu_{\widehat{0}c_0}
        \prod_{i=1}^m
        \mu_{c_{i-1}c_i}
        \frac{x^{\rho_{c_{i-1}c_i}}-x}{x-1}.
    \]
    More generally, for every interval $[s,t]$ one has
    \[
        \H^*_{st}(x)
        =
        (-1)^{\rho_{st}}
        \sum_{s\leq c_0 < c_1 < \cdots < c_m = t}
        \mu_{s c_0}
        \prod_{i=1}^m
        \mu_{c_{i-1}c_i}
        \frac{x^{\rho_{c_{i-1}c_i}}-x}{x-1}.
    \]
\end{theorem}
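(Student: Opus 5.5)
The plan is to avoid computing the inverse $(\overline{\kappa^*})^{-1}$ by brute force. Instead I would write the right-hand side of the claimed formula as an explicit product in $\mathcal{I}_\rho(P)$ and check that it inverts $-\overline{\kappa^*}$, where $\kappa^*:=(\chi^{\rev})^{\sgn}$.

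\textbf{Step 1: compute the kernel.} Since $\chi=\mu\,\upzeta^{\rev}$ and rev is multiplicative, $\chi^{\rev}=\mu^{\rev}\upzeta$, so $\kappa^*=(\mu^{\rev})^{\sgn}\upzeta^{\sgn}$. Interval-wise this reads
\[
\kappa^*_{st}(x)=(-1)^{\rho_{st}}S_{st}(x),\qquad S_{st}(x):=\sum_{s\le w\le t}\mu_{sw}\,x^{\rho_{sw}}.
\]
Hence $\overline{\kappa^*}_{st}=(-1)^{\rho_{st}}S_{st}/(x-1)$ for $s<t$, and $\overline{\kappa^*}_{ss}=-1$.

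\textbf{Step 2: encode the right-hand side algebraically.} Define $B\in\mathcal{I}_\rho(P)$ by $B_{ss}=0$ and $B_{st}=\mu_{st}\frac{x^{\rho_{st}}-x}{x-1}$ for $s<t$. Since $B$ vanishes on the diagonal, $(\delta-B)^{-1}=\sum_{k\ge0}B^k$, and its $(c_0,t)$ entry is the sum over chains $c_0<c_1<\cdots<c_m=t$ of $\prod_{i} B_{c_{i-1}c_i}$, with the $m=0$ term equal to $1$. Consequently the right-hand side of the theorem equals $(-1)^{\rho_{st}}\bigl(\mu(\delta-B)^{-1}\bigr)_{st}$. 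Because $\rho$ is additive along chains, the global sign can be distributed over the factors of each chain. The claim is therefore equivalent to
\[
\H^*=\mu^{\sgn}\bigl(\delta-B^{\sgn}\bigr)^{-1}.
\]

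\textbf{Step 3: reduce to a single interval identity.} Since $\H^*=-(\overline{\kappa^*})^{-1}$ and $\mu^{\sgn}=(\upzeta^{\sgn})^{-1}$, the identity in Step~2 is equivalent to
\[
-\overline{\kappa^*}=(\delta-B^{\sgn})\,\upzeta^{\sgn},
\]
which is equivalent (applying the automorphism $\sgn$) to $-(\overline{\kappa^*})^{\sgn}=(\delta-B)\upzeta$. This is checked entrywise. On the diagonal both sides equal $1$. For $s<t$, the right-hand side is
\[
1-\sum_{s<w\le t}\mu_{sw}\frac{x^{\rho_{sw}}-x}{x-1}.
\]
Using $\sum_{s<w\le t}\mu_{sw}=-1$ (valid for $s<t$), the numerator becomes $(S_{st}-1)+x$, so the whole expression equals $1-\frac{S_{st}+x-1}{x-1}=-\frac{S_{st}}{x-1}$. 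This matches $-(\overline{\kappa^*})^{\sgn}_{st}=-S_{st}/(x-1)$ from Step~1.

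The bounded case is the instance $[s,t]=[\widehat0,\widehat1]$. The only delicate point I expect is the sign and order bookkeeping: keeping $\mu^{\sgn}$ on the left, matching the factor $\mu_{\widehat0 c_0}$, and confirming that the chain expansion of $(\delta-B)^{-1}$ allows $c_0=t$ (the $m=0$ term). No degeneracy issue arises, because the verification is a pure identity in the incidence algebra.
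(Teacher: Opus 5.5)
Your proof is correct, but it takes a different route from the paper. The paper gives a two-line argument. It specializes a general chain formula for Chow functions in terms of the right KLS function, \cite[eq.~(17)]{ferroni-matherne-vecchi}, to $f^*=\mu^{\sgn}$; this uses that $\mu^{\sgn}$ is the right KLS function of $(\chi^{\rev})^{\sgn}$, i.e.\ Proposition~\ref{prop:kls-and-zeta-of-rev-kernel} with $g=\upzeta$. It then pulls out the common sign $(-1)^{\rho_{st}}$ by additivity of $\rho$.

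You instead prove the chain expansion from scratch. You package the right-hand side as $\mu^{\sgn}(\delta-B^{\sgn})^{-1}$ and check $-(\overline{\kappa^*})^{\sgn}=(\delta-B)\upzeta$ entrywise, which only uses $\sum_{s<w\le t}\mu_{sw}=-1$. Your route is self-contained and needs no KLS theory, degree bounds, or citation.

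In fact your computation works verbatim for any factorization $\kappa=f^{\rev}f^{-1}$ with $f_{ss}=1$. One has $(x-1)\overline{\kappa}=\kappa-x\delta=(f^{\rev}-xf)f^{-1}$, hence $\H=f\bigl((xf-f^{\rev})/(x-1)\bigr)^{-1}$, whose chain expansion has exactly the shape of the formula the paper quotes. Your Step~3 is this identity for $f=\mu^{\sgn}$, where $(xf-f^{\rev})/(x-1)=\delta-B^{\sgn}$.

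The paper's route is shorter and presents the theorem as one instance of a general mechanism. The final sign bookkeeping is the same in both.
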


\begin{proof}
    We use the formula appearing in \cite[eq.~(17)]{ferroni-matherne-vecchi}, together with the fact that the right KLS function associated to $(\chi^{\rev})^{\sgn}$ is $f^*=\mu^{\sgn}$. This gives
    \[
    \H^*_{st}(x)
    =
    \sum_{s \leq c_0 < c_1 < \dots < c_m=t}
    (-1)^{\rho_{s c_0}}\mu_{s c_0}
    \prod _{i=1}^m
    (-1)^{\rho_{c_{i-1}c_i}}\mu_{c_{i-1}c_i}
    \frac{x^{\rho_{c_{i-1}c_i}}-x}{x-1}.
    \]
    Since the weak rank function is additive, the exponents of $-1$ appearing in each summand add up to
    \[
        \rho_{s c_0}+\sum_{i=1}^m\rho_{c_{i-1}c_i}
        =
        \rho_{st}.
    \]
    Factoring out this common sign gives the interval formula. Taking $s=\widehat{0}$ and $t=\widehat{1}$ gives the stated formula for $\H^*_P(x)$.
\end{proof}

Unlike its non-dual counterpart, the formula in the preceding proposition may contain many signs. It is natural to inquire whether the result is in the end always a polynomial with all nonnegative coefficients. The answer is negative, as shown by the following example.

\begin{example}
    Consider the rank $3$ graded poset $P$ depicted in Figure~\ref{fig:counterex-positivity}. The dual Chow polynomial $\H^*_P(x)$ can be computed by hand, yielding 
        \[ \H^*_P(x) = 1 - 2x +x^2.\]
    
    \begin{figure}[ht]
    \centering
	\begin{tikzpicture}  
	[scale=0.7,auto=center,every node/.style={circle,scale=0.8, fill=black, inner sep=2.7pt}] 
	\tikzstyle{edges} = [thick];
	
	\node[] (s)  at (0,-1) {};
	
	\node[] (a1) at (-3,1) {};
	\node[] (a2) at (-1.5,1) {};
	\node[] (a3) at (0,1)  {};
	\node[] (a4) at (1.5,1)  {};
	\node[] (a5) at (3,1)  {};
	
	\node[] (b1) at (-3,3) {};
	\node[] (b2) at (-1.5,3) {};
	\node[] (b3) at (0,3)  {};
	\node[] (b4) at (1.5,3)  {};
	\node[] (b5) at (3,3)  {};
	
	\node[] (t) at (0,5) {};
	
	\draw[edges] (s) -- (a1);
	\draw[edges] (s) -- (a2);
	\draw[edges] (s) -- (a3);
	\draw[edges] (s) -- (a4);
	\draw[edges] (s) -- (a5);
	
	\draw[edges] (a1) -- (b1);
	\draw[edges] (a1) -- (b2);
	\draw[edges] (a2) -- (b2);
	\draw[edges] (a2) -- (b3);
	\draw[edges] (a3) -- (b3);
	\draw[edges] (a3) -- (b4);
	\draw[edges] (a4) -- (b4);
	\draw[edges] (a5) -- (b5);
	
	\draw[edges] (b1) -- (t);
	\draw[edges] (b2) -- (t);
	\draw[edges] (b3) -- (t);
	\draw[edges] (b4) -- (t);
	\draw[edges] (b5) -- (t);
	\end{tikzpicture}\caption{A poset whose dual Chow polynomial has both plus and minus signs}\label{fig:counterex-positivity}
\end{figure}
\end{example}

The preceding example hints at the requirement we need to impose in order to guarantee a ``sign-definiteness'' for the coefficients in $\H^*_P(x)$.
By \Cref{prop:degree-and-symmetry}(i), we know that the following holds for any weakly ranked poset $(P,\rho)$:
    \[ [x^{\rho_{st}-1}] \H^*_{st}(x) = (-1)^{\rho_{st}}[x^{\rho_{st}}]\chi^{\rev}_{st}(x) = (-1)^{\rho_{st}}\mu_{st}. \]
The M\"obius function \emph{alternates in sign} if it satisfies the condition 
\begin{equation}
\label{eq:moebius-alternates-sign}
 (-1)^{\rho_{st}}\mu_{st}\geq 0 \quad \text{for all intervals } [s,t]\,.
\end{equation}
For several important classes of posets the M\"obius function satisfies this condition \eqref{eq:moebius-alternates-sign},
but if that condition fails for some interval, the dual Chow polynomial might have some stray sign behavior. As the following theorem shows, under that assumption, we furthermore get a family of unimodal polynomials.

\begin{theorem}\label{thm:mobius-sign-unimodality}    
Let $(P,\rho)$ be a weakly ranked poset for which the M\"obius function alternates in sign: for every interval $[s,t]$ one has $(-1)^{\rho_{st}}\mu_{st}\geq 0$. 
Then, for each interval $[s,t]$, the polynomial $\H^*_{st}(x)$ has nonnegative and unimodal coefficients.
\end{theorem}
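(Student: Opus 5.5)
The plan is to apply \Cref{cor:dual-sign-twist-unimodality} directly to the kernel $(\chi^{\rev})^{\sgn}$. That corollary says that if either $f^*$ or $g^*$, the right or left KLS function of $(\chi^{\rev})^{\sgn}$, has nonnegative coefficients on every interval, then $\H^*$ is nonnegative and unimodal on every interval. So it suffices to identify one of these two functions explicitly and verify its nonnegativity from the hypothesis on the M\"obius function.

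The natural candidate is $f^*$, which the paper already identifies as the sign-twisted M\"obius function $\mu^{\sgn}$. I would double-check this with \Cref{prop:kls-and-zeta-of-rev-kernel}. The left KLS function of $\chi=\mu\upzeta^{\rev}=\upzeta^{-1}\upzeta^{\rev}$ is $g=\upzeta$: indeed $g_{ss}=1$ and $\deg \upzeta_{st}=0<\tfrac12\rho_{st}$ for $s<t$, and uniqueness in \Cref{thm:kls-functions} applies. The proposition then gives
\[ f^*=(g^{-1})^{\sgn}=(\upzeta^{-1})^{\sgn}=\mu^{\sgn}. \]
Concretely, $f^*_{st}(x)=(-1)^{\rho_{st}}\mu_{st}$, which is a constant polynomial. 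The degree condition holds trivially: the value is $1$ when $s=t$, and for $s<t$ it has degree $0<\tfrac12\rho_{st}$ because $\rho_{st}\geq 1$. This is consistent with the degree-bound induction sketched in the proof of that proposition.

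The hypothesis of the theorem, $(-1)^{\rho_{st}}\mu_{st}\geq 0$ for every interval, says precisely that $f^*_{st}$ has nonnegative coefficients on every interval. \Cref{cor:dual-sign-twist-unimodality} then yields that $\H^*_{st}(x)$ is nonnegative and unimodal for every $[s,t]$.

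The only point needing care is the caveat noted at the start of the section: $(\chi^{\rev})^{\sgn}$ may be degenerate. I would check that \Cref{thm:unimodal-nonneg}, which underlies the corollary, is stated for arbitrary kernels without a non-degeneracy assumption; as quoted, it is. If degeneracy did pose an issue, unimodality would still be meant in the sense of the canonical decomposition of \cite{ferroni-matherne-vecchi}, which produces nonnegative combinations of symmetric unimodal pieces centred at $(\rho_{st}-1)/2$. This is the step I expect to be the only real obstacle, and it is mostly a matter of bookkeeping.
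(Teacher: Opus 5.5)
Your proposal is correct and matches the paper's proof exactly: the paper also deduces the theorem directly from Corollary~\ref{cor:dual-sign-twist-unimodality}, using $f^*=\mu^{\sgn}$, whose constant values $(-1)^{\rho_{st}}\mu_{st}$ are nonnegative precisely under the alternating-sign hypothesis. Your check of $f^*=\mu^{\sgn}$ via $g=\upzeta$ and Proposition~\ref{prop:kls-and-zeta-of-rev-kernel} is sound. Your remark on degeneracy is also right, since the quoted criterion has no non-degeneracy assumption, though the paper's argument does not need either addition.
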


\begin{proof}
    This is a direct consequence of Corollary~\ref{cor:dual-sign-twist-unimodality}, because $f^*=\mu^{\sgn}$.
\end{proof}

As mentioned, the M\"obius function alternates in sign for several classes of posets. A key example are all Cohen--Macaulay posets; however, as we will show later, for that class of posets stronger phenomena than unimodality hold true. 

\begin{example}\label{ex:U34}
    Consider the poset $P$ depicted in Figure~\ref{fig:U34-haspic}. This is the truncation of the Boolean lattice of rank $4$ or, in other words, the lattice of flats of the uniform matroid $\U_{3,4}$. This poset is Cohen--Macaulay. A direct computation gives
    \[
        \H^*_{P}(x)=3x^2+11x+3,
    \]
    which is nonnegative and unimodal, as expected from the preceding result.

\begin{figure}[ht]
    \centering
    \begin{tikzpicture}
    [scale=0.5,auto=center,every node/.style={circle,scale=0.8, fill=black, inner sep=2.7pt}]
    \tikzstyle{edges} = [thick];

    \node[] (s)  at (0,-1) {};

    \node[] (a1) at (-4.5,1) {};
    \node[] (a2) at (-1.5,1) {};
    \node[] (a3) at (1.5,1)  {};
    \node[] (a4) at (4.5,1)  {};

    \node[] (b12) at (-6,3) {};
    \node[] (b13) at (-3.6,3) {};
    \node[] (b14) at (-1.2,3) {};
    \node[] (b23) at (1.2,3)  {};
    \node[] (b24) at (3.6,3)  {};
    \node[] (b34) at (6,3)  {};

    \node[] (t) at (0,5) {};

    \draw[edges] (s) -- (a1);
    \draw[edges] (s) -- (a2);
    \draw[edges] (s) -- (a3);
    \draw[edges] (s) -- (a4);

    \draw[edges] (a1) -- (b12);
    \draw[edges] (a1) -- (b13);
    \draw[edges] (a1) -- (b14);

    \draw[edges] (a2) -- (b12);
    \draw[edges] (a2) -- (b23);
    \draw[edges] (a2) -- (b24);

    \draw[edges] (a3) -- (b13);
    \draw[edges] (a3) -- (b23);
    \draw[edges] (a3) -- (b34);

    \draw[edges] (a4) -- (b14);
    \draw[edges] (a4) -- (b24);
    \draw[edges] (a4) -- (b34);

    \draw[edges] (b12) -- (t);
    \draw[edges] (b13) -- (t);
    \draw[edges] (b14) -- (t);
    \draw[edges] (b23) -- (t);
    \draw[edges] (b24) -- (t);
    \draw[edges] (b34) -- (t);
    \end{tikzpicture}
    \caption{The lattice of flats of the uniform matroid $\U_{3,4}$.}
    \label{fig:U34-haspic}
\end{figure}
\end{example}

\begin{example}
    Let $\Pi_{n}$ denote the partition lattice of the set $[n+1]$, ordered by refinement. We regard $\Pi_{n}$ as a graded lattice of rank $n$, with minimum element the partition into singletons and with maximum element the trivial partition. The first values of the dual Chow polynomial are as follows:
    \[
        \H^*_{\Pi_n}(x)=
        \begin{cases}
            1 & n=1,\\
            2x+2 & n=2,\\
            6x^2+18x+6 & n=3,\\
            24x^3+154x^2+154x+24 & n=4,\\
            120x^4+1440x^3+3000x^2+1440x+120 & n=5,\\
            720x^5+15098x^4+56118x^3+56118x^2+15098x+720 & n=6.
        \end{cases}
    \]
    These computations are consistent with the well-known identity
    $\mu_{\Pi_n}(\widehat{0},\widehat{1})=(-1)^n n!$.
    Indeed, by Proposition~\ref{prop:degree-and-symmetry}, the leading and constant
    coefficients of $\H^*_{\Pi_n}(x)$ are both equal to
    $(-1)^n\mu_{\Pi_n}(\widehat{0},\widehat{1})=n!$.
    Since partition lattices are Cohen--Macaulay, the displayed polynomials are
    nonnegative, unimodal, and symmetric, as predicted by
    Theorem~\ref{thm:mobius-sign-unimodality}.
\end{example}

\subsection{The dual augmented Chow functions}

The following provides a clean recursive way of computing the dual right augmented Chow polynomial of a poset $P$.

\begin{proposition}
    The dual right augmented Chow function satisfies the following two recursions:
    \begin{align*}
        \F^*_{st}(x) 
        &= -\sum_{s< w \leq t} (-1)^{\rho_{sw}}(1+x+\cdots + x^{\rho_{sw}})\cdot \F^*_{wt}(x) \,,\\
        &= -\sum_{s\leq w < t} (-1)^{\rho_{wt}}(1+x+\cdots + x^{\rho_{wt}})\cdot \F^*_{sw}(x) \,,
    \end{align*}
    with base cases $\F^*_{ss}(x) = 1$ for every $s\in P$.
    In other words, we have that for $s\leq t$ in P: 
    \[
        (F^{*})^{-1}_{st}(x) = (-1)^{\rho_{st}}\frac{x^{\rho_{st}+1}-1}{x-1}.
    \]
\end{proposition}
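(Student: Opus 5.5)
The plan is to compute $(\F^*)^{-1}$ directly in the incidence algebra and then read off both recursions from $\F^*(\F^*)^{-1}=\delta=(\F^*)^{-1}\F^*$. Write $\kappa^*=(\chi^{\rev})^{\sgn}$ for the dual kernel. By definition $\F^*=\H^*(f^*)^{\rev}$, and by the remark at the start of this section $f^*=\mu^{\sgn}$. Since sign twist and $\rev$ commute with inversion, $((f^*)^{\rev})^{-1}=((\mu^{\rev})^{-1})^{\sgn}=(\upzeta^{\rev})^{\sgn}$. Also $(\H^*)^{-1}=-\overline{\kappa^*}$ by Definition~\ref{def:chow-function}. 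Hence
\[
(\F^*)^{-1}=(\upzeta^{\rev})^{\sgn}\cdot D,\qquad D:=-\overline{\kappa^*}.
\]

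The key trick is to clear the division by $x-1$ hidden in $\overline{\kappa^*}$. Off the diagonal we have $(x-1)D_{st}=-\kappa^*_{st}$. On the diagonal $D_{ss}=1$, so $(x-1)D_{ss}=x-1=x-\kappa^*_{ss}$. Therefore $(x-1)D=x\delta-\kappa^*$ as elements of $\mathcal{I}(P)$ (multiplication by the scalar $x-1$ is central).

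Next I unwind the kernel: $\kappa^*=((\mu\upzeta^{\rev})^{\rev})^{\sgn}=(\mu^{\rev})^{\sgn}\upzeta^{\sgn}$. Using $\upzeta^{\rev}\mu^{\rev}=(\upzeta\mu)^{\rev}=\delta$, we get
\[
(x-1)(\F^*)^{-1}=x(\upzeta^{\rev})^{\sgn}-(\upzeta^{\rev})^{\sgn}(\mu^{\rev})^{\sgn}\upzeta^{\sgn}=x(\upzeta^{\rev})^{\sgn}-\upzeta^{\sgn}.
\]
Its $[s,t]$ entry is $(-1)^{\rho_{st}}(x^{\rho_{st}+1}-1)$. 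Dividing by $x-1$, which is legitimate in $\mathbb{Z}[x]$ since the quotient is a polynomial, gives the closed formula
\[
(\F^*)^{-1}_{st}=(-1)^{\rho_{st}}(1+x+\cdots+x^{\rho_{st}}).
\]

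For the recursions, expand $\big((\F^*)^{-1}\F^*\big)_{st}=0$ for $s<t$. The $w=s$ term is $(\F^*)^{-1}_{ss}\F^*_{st}=\F^*_{st}$, so moving the remaining terms over gives the first recursion. Expanding $\big(\F^*(\F^*)^{-1}\big)_{st}=0$ and isolating the $w=t$ term gives the second. The base case $\F^*_{ss}=1$ holds because $\H^*_{ss}=1$ and $f^*_{ss}=1$.

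The only delicate point is the diagonal bookkeeping in $(x-1)D=x\delta-\kappa^*$, specifically the convention $\overline{\kappa}_{ss}=-1$. Once that identity is in place, the rest is formal algebra using only that $\sgn$ and $\rev$ are algebra automorphisms commuting with inversion.
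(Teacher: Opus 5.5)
Your proof is correct and follows the same route as the paper: you invert $\F^*=\H^*(f^*)^{\rev}$ with $f^*=\mu^{\sgn}$ to get $(\F^*)^{-1}=(\upzeta^{\rev})^{\sgn}\bigl(-\overline{(\chi^{\rev})^{\sgn}}\bigr)$, and then read both recursions off $(\F^*)^{-1}\F^*=\F^*(\F^*)^{-1}=\delta$. Your identity $(x-1)\bigl(-\overline{(\chi^{\rev})^{\sgn}}\bigr)=x\delta-(\chi^{\rev})^{\sgn}$, which yields $(x-1)(\F^*)^{-1}=x(\upzeta^{\rev})^{\sgn}-\upzeta^{\sgn}$, makes explicit the entrywise computation that the paper only asserts, and it checks out.
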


\begin{proof}
    By definition, we have
    \( \F^* = \H^* (f^*)^{\rev} = \H^* (\mu^{\rev})^{\sgn}.\)
    By inverting both sides, we obtain
    \[ (\F^*)^{-1} = ((\mu^{\rev})^{\sgn})^{-1} \left(\H^*\right)^{-1} = (\zeta^{\rev})^{\sgn} \left(-\overline{(\chi^{\rev})^{\sgn}}\right).\]
    This is the sign twist of the inverse computed for the untwisted kernel $\chi^{\rev}$, and hence
    \[
        (F^{*})^{-1}_{st}(x)
        =
        (-1)^{\rho_{st}}(1+x+\cdots+x^{\rho_{st}}).
    \]
    The second formula follows immediately with this inverse.
    The displayed recursion is the usual recursion for the inverse of an incidence function with diagonal entries equal to $1$.
\end{proof}

\begin{proposition}\label{prop:Hstar-from-Fstar}
    The dual Chow function and the dual right augmented Chow function are related by
    \[  \F^*_{st}(x) = \sum_{s\leq w\leq t} \H^*_{sw}(x) (-x)^{\rho_{wt}}\mu_{wt} \enspace \text{ and } \enspace  
    \H^*_{st}(x)
        =
        \sum_{s\leq w\leq t}
        \F^*_{sw}(x)\,(-x)^{\rho_{wt}}\,.
    \]
    Moreover, if $s<t$,
    \[
        x \H^*_{st}(x)
        =
        \sum_{s\leq w\leq t}(-1)^{\rho_{wt}}\F^*_{sw}(x).
    \]
\end{proposition}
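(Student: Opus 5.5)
The plan is to work entirely in the incidence algebra and read off all three identities from the definitions $\F^*=\H^*(f^*)^{\rev}$ and $\H^*=-\bigl(\overline{\kappa^*}\bigr)^{-1}$, where $\kappa^*=(\chi^{\rev})^{\sgn}$. We also use the fact, noted above, that $f^*=\mu^{\sgn}$.

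\emph{First identity.} Since $\mu_{wt}$ is a constant, $(\mu^{\rev})_{wt}=x^{\rho_{wt}}\mu_{wt}$. Hence $\bigl((\mu^{\sgn})^{\rev}\bigr)_{wt}=(-x)^{\rho_{wt}}\mu_{wt}$. Expanding the convolution $\F^*=\H^*(\mu^{\sgn})^{\rev}$ gives $\F^*_{st}=\sum_{s\le w\le t}\H^*_{sw}(x)(-x)^{\rho_{wt}}\mu_{wt}$.

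\emph{Second identity.} We invert the factor $(\mu^{\sgn})^{\rev}$. Because sign twist and rev are both algebra automorphisms, its inverse is $(\upzeta^{\rev})^{\sgn}$, whose $(w,t)$ entry is $(-x)^{\rho_{wt}}$. Therefore $\H^*=\F^*(\upzeta^{\rev})^{\sgn}$, which is the second formula.

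\emph{Third identity.} The right-hand side is $(\F^*\upzeta^{\sgn})_{st}$. Substituting $\F^*=\H^*(\mu^{\rev})^{\sgn}$ gives
\[
\F^*\upzeta^{\sgn}=\H^*\bigl(\mu^{\rev}\upzeta\bigr)^{\sgn}.
\]
The key observation is that $\mu^{\rev}\upzeta=(\mu\,\upzeta^{\rev})^{\rev}=\chi^{\rev}$. So this product equals $\H^*\kappa^*$.

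From the definition of $\overline{\kappa^*}$ we get $\kappa^*=(x-1)\overline{\kappa^*}+x\delta$. This holds on the diagonal because $(x-1)(-1)+x=1$, and off the diagonal by definition. Hence
\[
\H^*\kappa^*=(x-1)\H^*\overline{\kappa^*}+x\H^*=-(x-1)\delta+x\H^*.
\]
For $s<t$ the $\delta$ term vanishes, which yields $x\H^*_{st}=\sum_{s\le w\le t}(-1)^{\rho_{wt}}\F^*_{sw}$.

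The only step needing care is recognizing $\mu^{\rev}\upzeta$ as $\chi^{\rev}$ and keeping the sign twists consistent. Everything else is routine bookkeeping.
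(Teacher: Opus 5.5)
Your proof is correct. The first two identities are obtained exactly as in the paper: expand $\F^*=\H^*(\mu^{\rev})^{\sgn}$, then multiply on the right by $(\upzeta^{\rev})^{\sgn}$.

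For the third identity you take a genuinely different route.

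\emph{The paper's route.} It applies the rev involution to $\H^*=\F^*(\upzeta^{\rev})^{\sgn}$ to get $(\H^*)^{\rev}=\F^*\upzeta^{\sgn}$. It then uses two facts from the general theory: the rev-invariance $(\F^*)^{\rev}=\F^*$ of augmented Chow functions, and the palindromicity $(\H^*)^{\rev}_{st}=x\H^*_{st}$ for $s<t$ from Proposition~\ref{prop:degree-and-symmetry}.

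\emph{Your route.} You substitute $\F^*=\H^*(\mu^{\rev})^{\sgn}$ back in and observe that $\mu^{\rev}\upzeta=(\mu\upzeta^{\rev})^{\rev}=\chi^{\rev}$, so that $\F^*\upzeta^{\sgn}=\H^*\kappa^*$. You then finish with only the definition of the Chow function, via $\kappa^*=(x-1)\overline{\kappa^*}+x\delta$ and $\H^*\overline{\kappa^*}=-\delta$.

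\emph{Comparison.} The paper's argument is a one-liner once those structural facts are granted. Yours is self-contained and avoids both the symmetry result and the rev-invariance of $\F^*$. The identity $\H\kappa=x\H-(x-1)\delta$ that you derive holds verbatim for any kernel $\kappa$; the only input specific to the characteristic kernel is the factorization $\chi^{\rev}=\mu^{\rev}\upzeta$.

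One small point: the decomposition $\kappa^*=(x-1)\overline{\kappa^*}+x\delta$ lives in the full incidence algebra $\mathcal{I}(P)$ rather than in $\mathcal{I}_{\rho}(P)$, because $x\delta$ violates the degree bound on the diagonal. This is harmless, since convolution is defined there.
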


\begin{proof}
    Since we are working with the characteristic kernel, the dual right KLS function is $f^*=\mu^{\sgn}$. Hence, by definition of the dual right augmented Chow function,
    \[ 
        \F^*=\H^*(f^*)^{\rev}=\H^*(\mu^{\rev})^{\sgn},
    \]
    which gives the first formula in the statement.
    Now, multiplying this one on the right by $((\mu^{\rev})^{\sgn})^{-1}=(\upzeta^{\rev})^{\sgn}$ gives
    \[
        \H^*=\F^*(\upzeta^{\rev})^{\sgn},
    \]
    and the displayed interval formula follows from the fact that
    $((\upzeta^{\rev})^{\sgn})_{wt}(x)=(-1)^{\rho_{wt}}x^{\rho_{wt}}$.
    
    For the second identity, \Cref{it:chow-symmetry} implies that
    $(\H^*)^{\rev}_{st}=x\H^*_{st}$ whenever $s<t$. Recall that $\F^*$ is invariant under the rev involution. Applying the rev involution to
    $\H^*=\F^*(\upzeta^{\rev})^{\sgn}$ gives
    \[
        (\H^*)^{\rev}
        =
        \F^*\upzeta^{\sgn}.
    \]
    Evaluating this identity on an interval $[s,t]$ with $s<t$ gives
    \[
        x\H^*_{st}(x)=
        \sum_{s\leq w\leq t}
        (-1)^{\rho_{wt}}\F^*_{sw}(x).\qedhere
    \]
\end{proof}

\subsection{Dual Chow polynomials from the extended \texorpdfstring{$\aaa\bbb$}{ab}-index}

We now show that the dual Chow function can also be recovered by a specialization of the extended $\aaa\bbb$-index introduced in Section~\ref{subsec:prelim-ab-index}.
More specifically, the related variant that we use for the dual right augmented Chow function is
\[
    \expsib_{st}(y,\aaa,\bbb)
    := \omega\bigl(\Psi_{st}(\aaa,\bbb) \bbb \bigr)
\]
for every interval $[s,t]$ with $s<t$ and $\expsib_{ss}(y,\aaa,\bbb)=1$.
Note that $\expsib$ is not obtained by simply appending a final $\bbb$ to the Poincar\'e-extended chain weight; the extra $\bbb$ is appended \emph{before} applying the $\omega$-transformation.

\begin{theorem}\label{thm:expsi-to-dualchow}
    Let $(P,\rho)$ be a weakly ranked poset, and assume that every interval of $P$ is graded with respect to $\rho$. For every $s\leq t$ one has
    \begin{align*}
        (1-x)^{\rho_{st}}\H^*_{st}(x) &= \expsitilde_{st}(-x,x,1)
        \enspace \text{and} \\
        (1-x)^{\rho_{st}}\F^*_{st}(x) &= \expsib_{st}(-x,x,1).
    \end{align*}
    In particular, if $P$ is bounded of rank $r$, then
    \begin{align*}
        \H^*_P(x)&=(1-x)^{-r}\expsitilde_P(-x,x,1)\enspace \text{ and }\\
        \F^*_P(x)&=(1-x)^{-r}\expsib_P(-x,x,1).
    \end{align*}
\end{theorem}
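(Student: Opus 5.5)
Our plan is to deduce both identities from Stump's specialization, Theorem~\ref{thm:stump}, together with an involution on $\aaa\bbb$-words that interchanges the roles of $\aaa$ and $\bbb$. The idea is that exchanging $\aaa$ and $\bbb$ on the flag side corresponds to complementing subsets $S\subseteq[r-1]$ in the flag $h$-vector. On the kernel side, it corresponds to passing from $\chi$ to $(\chi^{\rev})^{\sgn}$.

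\textbf{Route via the chain formula.} Rather than chase the involution abstractly, we plan to compare chain expansions directly.

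First, we expand $\expsitilde_{st}(y,\aaa,\bbb)$ using the Poincar\'e form of Theorem~\ref{thm:expsi-via-poincare}: we sum over chains $s=c_0<c_1<\cdots<c_m<t$ of $\Poin^{\cC}_{st}(y)\,\wt^{\cC}_{st}(\aaa,\bbb)$.

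Second, we specialize $y=-x$, $\aaa=x$, $\bbb=1$. The letter $\aaa-\bbb$ becomes $x-1$, and $\bbb$ becomes $1$. The weight is a word of length $\rho_{st}-1$ with exactly $m$ letters $\bbb$, so it specializes to $(x-1)^{\rho_{st}-1-m}$. Moreover,
\[
\Poin_{uv}(-x)=x^{\rho_{uv}}\chi_{uv}(x^{-1})=\sum_{u\le w\le v}\mu_{uw}x^{\rho_{uw}}.
\]

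Third, we compare with the chain formula of Theorem~\ref{thm:iterative} after multiplying it by $(1-x)^{\rho_{st}}$. Here it is more convenient to use the form of Theorem~\ref{thm:iterative} expressed through $f^*=\mu^{\sgn}$ and the factors $\frac{x^{\rho}-x}{x-1}$. Expanding $\Poin$ introduces intermediate elements $w$ with $\mu$-weights. The step is to regroup, for each refined chain, the terms coming from the Möbius expansions of the $\Poin$ factors, and to use
\[
\frac{x^{\rho}-x}{x-1}=x+x^2+\cdots+x^{\rho-1},
\]
so that the two sides match chain by chain.

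\textbf{Cleaner alternative at the level of incidence algebras.} We regard $\Lambda_{st}:=(1-x)^{-\rho_{st}}\expsitilde_{st}(-x,x,1)$ as an element of $\mathcal I(P)$ over rational functions. The goal is to show $-\Lambda^{-1}=\overline{(\chi^{\rev})^{\sgn}}$, with diagonal entries $-1$, because that identity characterizes $\H^*$.

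The chain sum with the $\bbb$-markers is multiplicative. A chain with $s\in\cC$ factors as a product over consecutive pieces: each piece $[c_{i},c_{i+1}]$ contributes $\Poin_{c_ic_{i+1}}$ times a power of $(\aaa-\bbb)$, and the $\bbb$'s sit at the junctions. So $\Lambda$ is a geometric series in the element $\Poin(-x)(x-1)^{\rho-1}/(1-x)^{\rho}$, taken off the diagonal. By Lemma~\ref{lem:inverses-as-sums-over-chains}, this gives $\Lambda=-\bigl(\overline{\kappa'}\bigr)^{-1}$, where
\[
\kappa'_{uv}=-(x-1)(x-1)^{\rho_{uv}-1}(1-x)^{-\rho_{uv}}\Poin_{uv}(-x)\cdot(\text{sign}).
\]
This should simplify to $(-1)^{\rho_{uv}}\chi^{\rev}_{uv}(x)$, up to the normalization of $\overline{\cdot}$. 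We then identify $\kappa'=(\chi^{\rev})^{\sgn}$, which gives $\Lambda=\H^*$.

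The second identity, for $\F^*$, follows the same pattern. The extra final $\bbb$, appended before $\omega$, produces one additional factor $\bbb\to1$ at the top together with an extra Poincar\'e term. This corresponds to right multiplication by $(f^*)^{\rev}=(\mu^{\rev})^{\sgn}$, in accordance with Proposition~\ref{prop:Hstar-from-Fstar}.

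\textbf{Main obstacle.} The hard step is bookkeeping the $\omega$-transformation when letters $\aaa\bbb$ are adjacent. We avoid this by working only with the Poincar\'e form, where $\omega$ has already been resolved. For $\F^*$, however, the $\omega$ is applied after appending $\bbb$, so we must first derive a Poincar\'e-type chain formula for $\expsib$, analogous to Theorem~\ref{thm:expsi-via-poincare}, with chains in $(s,t]$. The signs $(-1)^{\rho}$ from $(1-x)^{\rho}$ versus $(x-1)^{\rho}$ also need care.
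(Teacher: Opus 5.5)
Your ``cleaner alternative'' for the first identity is correct and is essentially the paper's argument. Expanding $\H^*=-\bigl(\overline{(\chi^{\rev})^{\sgn}}\bigr)^{-1}$ with \Cref{lem:inverses-as-sums-over-chains} gives a sum over chains $s=c_0<\cdots<c_m=t$ of $\prod_i(-1)^{\rho_{c_{i-1}c_i}}\chi^{\rev}_{c_{i-1}c_i}(x)/(x-1)$. The Poincar\'e form of $\expsitilde_{st}(-x,x,1)$ attaches $(x-1)^{\rho_{st}-m}\prod_i\chi^{\rev}_{c_{i-1}c_i}(x)$ to the same chain. In your $\kappa'$, drop the stray minus sign and the ``(sign)'': one simply has $\kappa'_{uv}=(x-1)^{\rho_{uv}}(1-x)^{-\rho_{uv}}\Poin_{uv}(-x)=(-1)^{\rho_{uv}}\chi^{\rev}_{uv}(x)$. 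Your first route through \Cref{thm:iterative} is unnecessary, and its claim that the two sides ``match chain by chain'' is not accurate. The $\chi^{\rev}$-chain sum and the $\mu$-chain sum of \Cref{thm:iterative} do not agree term by term; identifying them amounts to redoing the M\"obius-inversion argument behind \cite[eq.~(17)]{ferroni-matherne-vecchi}.

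The genuine gap is the identity for $\F^*$, which you only assert ``follows the same pattern''. As you note yourself, \Cref{thm:expsi-via-poincare} does not cover $\expsib$. Your heuristic (``one additional factor $\bbb\to1$ at the top together with an extra Poincar\'e term'') is also not a mechanism. Because $\bbb$ is appended \emph{before} $\omega$, a terminal $\aaa$ of a word in $\Psi_{st}$ fuses with it into an $\aaa\bbb$-block, so the appended letter does not contribute a separate factor.

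The missing idea is to compute $\omega(\Psi_{st}\bbb)$ directly, with no Poincar\'e polynomials at all. Write $\Psi_{st}\bbb=\sum_{s=c_0<\cdots<c_m=t}\prod_{i=1}^m(\aaa-\bbb)^{\rho_{c_{i-1}c_i}-1}\bbb$. Every block ends in $\bbb$, so no $\aaa\bbb$-pair straddles two blocks, and $\omega$ is multiplicative over this factorization. Next use $\omega\bigl((\aaa-\bbb)^{k-1}\bbb\bigr)=(\aaa-\bbb)^{k-1}\bigl(\bbb-(-y)^k\aaa\bigr)$ from \cite[Lemma~3.5]{hoster-stump-vecchi}. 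At $y=-x$, $\aaa=x$, $\bbb=1$ this equals $-(x-1)^k(1+x+\cdots+x^k)$, which depends only on $k$. By \Cref{lem:inverses-as-sums-over-chains}, $(1-x)^{-\rho_{st}}\expsib_{st}(-x,x,1)$ is then the $[s,t]$-entry of the inverse of the incidence function $(-1)^{\rho_{uv}}(1+x+\cdots+x^{\rho_{uv}})$. That inverse is exactly $\F^*$, by the formula for $(\F^*)^{-1}$ established just before \Cref{prop:Hstar-from-Fstar}. The paper phrases this as an induction on $\rho_{st}$, splitting off the first element of the chain. Your suggested route via $\F^*=\H^*(\mu^{\rev})^{\sgn}$ would in any case still require evaluating $\omega$ on words ending in the appended $\bbb$, so it does not avoid this computation.
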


\begin{proof}
    We first prove the identity for $\H^*$ using the expansion of $\expsitilde$ given in \Cref{thm:expsi-via-poincare}.
    By Lemma~\ref{lem:inverses-as-sums-over-chains}, applied to the definition of dual Chow function, we obtain:
    \[
        \H^*_{st}(x)
        =
        \sum_{s=c_0<c_1<\cdots<c_m=t}
        (x-1)^{-m}(-1)^{\rho_{st}}\prod_{i=1}^{m}\chi^{\rev}_{c_{i-1}c_i}(x).
    \]
    On the other hand, after specializing $y=-x$ in \Cref{thm:expsi-via-poincare}, the Poincar\'e factor attached to the same chain becomes
    \[
        \prod_{i=1}^{m}\Poin_{c_{i-1}c_i}(-x)
        =
        \prod_{i=1}^{m}\chi^{\rev}_{c_{i-1}c_i}(x).
    \]
    Moreover, after applying $\iota$, the word attached to this chain evaluates at $\aaa=x$ and $\bbb=1$ to~$(x-1)^{\rho_{st}-m}$. Therefore
    \[
        \expsitilde_{st}(-x,x,1)
        =
        (1-x)^{\rho_{st}}\H^*_{st}(x).
    \]
    We now prove the augmented identity. We proceed by induction on $\rho_{st}$. The statement is clear when $s=t$. Assume $s<t$. 
    By \cite[Lemma~3.5]{hoster-stump-vecchi}, we have $\omega( (\aaa-\bbb)^k \bbb) = (\aaa-\bbb)^k \cdot (\bbb - (-y)^{k+1} \aaa)$, and obtain
    \begin{align*}
        \omega( \Psi_{st} (\aaa,\bbb) \bbb )
        &= \sum_{ s = c_0 < \dots < c_{m} = t }
        \prod_{i=0}^{m}(\aaa-\bbb)^{\rho_{c_{i-1} c_{i}} - 1 }
        (\bbb - (-y)^{\rho_{c_{i-1} c_{i}}} \aaa) \,. 
    \intertext{
    Splitting a chain in $[s,t]$ according to its first element $c_1=w>s$, we obtain}
        \expsib_{st}(-x,x,1)
        &=
        -\sum_{s<w\leq t}
        (x-1)^{\rho_{sw}}
        (1+x+\cdots+x^{\rho_{sw}})
        \expsib_{wt}(-x,x,1) \,,
    \intertext{
    which is, by induction,}
        &= -\sum_{s<w\leq t}
        (x-1)^{\rho_{sw}}(1-x)^{\rho_{wt}}
        (1+x+\cdots+x^{\rho_{sw}})\F^*_{wt}(x) \,.
    \intertext{
    Since $(1-x)^{\rho_{wt}}=(-1)^{\rho_{wt}}(x-1)^{\rho_{wt}}$, and since $\rho_{st}=\rho_{sw}+\rho_{wt}$, this becomes}
        &= (1-x)^{\rho_{st}}
        \left(-\sum_{s<w\leq t}
        (-1)^{\rho_{sw}}(1+x+\cdots+x^{\rho_{sw}})\F^*_{wt}(x)
        \right) \,.
    \end{align*}
    The recursion for $\F^*$ proved in the previous subsection identifies the expression in parentheses with $\F^*_{st}(x)$. Hence
    \[
        \expsib_{st}(-x,x,1)=(1-x)^{\rho_{st}}\F^*_{st}(x),
    \]
    as desired.
\end{proof}

The function $\expsitilde$ can be obtained from both $\exapsi$ and $\expsib$ in similar ways.
Define $\iota_R$ as the $\mathbb{Z}[y]$-linear transformation that deletes the right-most letter in every $\aaa\bbb$-word.
Then,
\(
    \iota_R \circ \omega = (1+y) \omega\circ \iota_R
\)
and we obtain the following schematic picture:
\begin{figure}[ht]
    \centering
	\begin{tikzpicture}
        \node (exapsi) at (0,0)
            {$\exapsi = \omega(\aaa\Psi)$};
            
        \node (expsib) at (6,0)
            {$\expsib = \omega(\Psi\bbb)$};

        \node (iotaexapsi) at (3,-1.5)
            {$\expsitilde = (1+y)\omega(\Psi)$};

        \draw[bend right=0, |->]
            (exapsi.south) to node[midway, left] {$\iota \ $} (iotaexapsi.north west);

        \draw[bend left=0, |->]
            (expsib.south) to node[midway, right] {$\ \iota_R$} (iotaexapsi.north east);
    \end{tikzpicture}
    \caption{Three extensions of $\aaa\bbb$-index variations.}\label{fig:extended-ab}
\end{figure}

\subsection{Gamma-positivity}

For a subset $S\subseteq[r-1]$, let $S^c=[r-1]\setminus S$. We shall say that $S$ is stable if it contains no two consecutive integers.

\begin{theorem}\label{thm:gamma exp}
    Let $P$ be a bounded weakly ranked poset of rank $r$. The polynomials $\H^*_P(x)$ and $\F^*_P(x)$ can be written as follows.
    \[
        \H^*_P(x)
        =
        \sum_{\substack{S\subseteq[r-1]\\ r-1\notin S \\ S \text{ stable}}}
        \beta_P(S^c)\,
        x^{|S|}(1+x)^{r-1-2|S|} \,,
    \]
    and
    \[
        \F^*_P(x)
        =
        \sum_{\substack{S\subseteq[r-1]\\ S\text{ stable}}}
        \beta_P(S^c)\,
        x^{|S|}(1+x)^{r-2|S|}.
    \]
    In particular, if $P$ has nonnegative flag $h$-vector, then $\H^*_P(x)$ and $\F^*_P(x)$ are $\gamma$-positive. This applies, in particular, to Cohen--Macaulay posets.
\end{theorem}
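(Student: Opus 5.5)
The plan is to derive both expansions from \Cref{thm:expsi-to-dualchow}, combined with the flag $h$-vector expansion $\Psi_P=\sum_{T\subseteq[r-1]}\beta_P(T)\operatorname{m}_T(\aaa,\bbb)$. Since $\omega$ is $\mathbb{Z}$-linear, it is enough to compute the specialization $y=-x$, $\aaa=x$, $\bbb=1$ of $\omega(\operatorname{m}_T)$ for a single monomial, and of $\omega(\operatorname{m}_T\bbb)$ in the augmented case.

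For a single $\aaa\bbb$-word, $\omega$ acts factor by factor. Each occurrence of $\aaa\bbb$ is replaced by $(1+y)(\aaa\bbb+y\bbb\aaa)$, and each remaining letter is replaced individually. These occurrences of $\aaa\bbb$ never overlap, so the factorization is well defined. Under the specialization the three building blocks become
\begin{align*}
\aaa\bbb &\mapsto (1-x)(x-x^2)=x(1-x)^2,\\
\aaa &\mapsto x-x=0,\\
\bbb &\mapsto 1-x^2=(1-x)(1+x).
\end{align*}
The key observation is that a lone $\aaa$, meaning one not immediately followed by $\bbb$, kills the whole monomial.

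Write $A=[r-1]\setminus T$ for the set of $\aaa$-positions of $\operatorname{m}_T$. The monomial survives exactly when every position $i\in A$ has $i+1\in T$. For a word of length $r-1$ this means $A$ is stable and $r-1\notin A$. In that case the word splits into $|A|$ blocks $\aaa\bbb$ and $r-1-2|A|$ lone letters $\bbb$, so
\[
\omega(\operatorname{m}_T)\big|_{y=-x,\aaa=x,\bbb=1}=x^{|A|}(1-x)^{r-1}(1+x)^{r-1-2|A|}.
\]
Now $\expsitilde=(1+y)\,\omega(\Psi)$, and the factor $1+y$ specializes to $1-x$. Multiplying by this factor and dividing by $(1-x)^{r}$, as prescribed by \Cref{thm:expsi-to-dualchow}, gives $x^{|A|}(1+x)^{r-1-2|A|}$. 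Renaming $S=A$, so that $T=S^c$, yields the claimed formula for $\H^*_P$.

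For $\F^*_P$ we use $\expsib_P=\omega(\Psi_P\bbb)$ and apply the same analysis to the word $\operatorname{m}_T\bbb$ of length $r$. Its $\aaa$-positions are still $A\subseteq[r-1]$, but the final $\bbb$ now allows $r-1\in A$. So the surviving terms are indexed by all stable $S\subseteq[r-1]$. Each contributes $x^{|S|}(1-x)^{r}(1+x)^{r-2|S|}$, and dividing by $(1-x)^r$ gives the stated formula. Positivity is then immediate: if $\beta_P\geq 0$ every term is a nonnegative multiple of $x^{i}(1+x)^{d-2i}$, which is $\gamma$-positivity. Cohen--Macaulay posets have nonnegative flag $h$-vector by the standard result of Stanley, so they are covered.

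The main obstacle is bookkeeping rather than any deep step. One must confirm that the block decomposition of a word under $\omega$ matches the definition in \cite{hoster-stump-vecchi}, in particular that ``occurrences of $\aaa\bbb$'' are taken greedily with no ambiguity; a word such as $\aaa\aaa\bbb$ has only one such occurrence. One must also check that the exponents of $(1-x)$ cancel exactly. I would verify both points on the example of $\U_{3,4}$, where $\H^*_P(x)=3x^2+11x+3$. There $r=3$, the formula gives $\beta(\{1,2\})(1+x)^2+\beta(\{2\})\,x=3(1+x)^2+5x$, and this matches since $\beta(\{2\})=6-1=5$.
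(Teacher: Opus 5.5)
Your proof is correct and takes essentially the same route as the paper. Both expand $\Psi_P$ in the flag $h$-vector basis and evaluate $\omega$ monomial by monomial at $y=-x$, $\aaa=x$, $\bbb=1$: lone $\aaa$'s vanish, $\aaa\bbb\mapsto x(1-x)^2$, and $\bbb\mapsto(1-x)(1+x)$. Both then cancel $(1-x)^r$ via \Cref{thm:expsi-to-dualchow}, and in the augmented case the appended $\bbb$ removes the condition $r-1\notin S$.
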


\begin{proof}
    Let $\omega_{\operatorname{ev}}$ denote the specialization of $\omega$ at $y=-x$, $\aaa=x$, and $\bbb=1$. For $S\subseteq[r-1]$, let $m_S=m_1\cdots m_{r-1}$ be the word such that $m_i=\bbb$ if $i\in S$, and $m_i=\aaa$ otherwise. With the usual notation for the flag $h$-vector, we have
    \[
        \Psi_P(\aaa,\bbb)
        =
        \sum_{S\subseteq[r-1]}\beta_P(S)m_S.
    \]
    Fix $S\subseteq[r-1]$, and put $T=S^c$. The set $T$ records the positions of the letters $\aaa$ in $m_S$. Under $\omega_{\operatorname{ev}}$,
    every occurrence of $\aaa\bbb$ contributes a factor $x(1-x)^2$, every remaining $\bbb$ contributes $(1-x)(1+x)$, and
    every remaining $\aaa$ contributes $0$.
    Thus $\omega_{\operatorname{ev}}(m_S)$ is nonzero exactly when $T$ is stable and $r-1\notin T$. In that case,
    \[
        \omega_{\operatorname{ev}}(m_S)
        =
        (1-x)^{r-1}x^{|T|}(1+x)^{r-1-2|T|}.
    \]
    Since $\expsitilde_P=(1+y)\omega(\Psi_P)$,
    Theorem~\ref{thm:expsi-to-dualchow} gives
    \[
        (1-x)^r\H^*_P(x)
        =
        (1-x)^r
        \sum_{\substack{T\subseteq[r-2]\\ T\text{ stable}}}
        \beta_P(T^c)x^{|T|}(1+x)^{r-1-2|T|}.
    \]
    Dividing by $(1-x)^r$ gives the stated formula for $\H^*_P(x)$.

    The argument for the augmented polynomial is the same, except that now
    \[
        \Psi_P(\aaa,\bbb)\bbb
        =
        \sum_{S\subseteq[r-1]}\beta_P(S)m_S\bbb.
    \]
    The final appended $\bbb$ removes the condition $r-1\notin T$. Hence a term contributes exactly when~$T=S^c$ is stable, and then
    \[
        \omega_{\operatorname{ev}}(m_S\bbb)
        =
        (1-x)^r x^{|T|}(1+x)^{r-2|T|}.
    \]
    Using again Theorem~\ref{thm:expsi-to-dualchow} yields the claimed $\gamma$-expansion of $\F^*_P(x)$.
    If the flag $h$-vector of $P$ is nonnegative, all coefficients in these $\gamma$-expansions are nonnegative, as desired.
\end{proof}

\begin{remark}
    If the flag $h$-vector of the poset $P$ is symmetric, meaning that
    \(
        \beta_P(S) = \beta_P(S^c)
    \),
    then the dual augmented Chow polynomial $\F^*_P$ equals the augmented Chow polynomial $\G_P$.
    This in particular holds for Eulerian posets, thanks to the Dehn--Sommerville relations (see \cite[Chapter~3]{stanley-ec1}). Note, however, that the equality $F^*_P(x) = G_P(x)$ for Eulerian posets (under the characteristic kernel) does not assume (nor it implies) any symmetry on the characteristic function of an Eulerian poset $P$ nor their intervals.
\end{remark}

\begin{example}
    Consider the poset $P$ depicted in Figure~\ref{fig:mobius-sign-not-gamma-positive}. This poset satisfies the condition in Theorem~\ref{thm:sign-twist-unimodality}, yet its flag $h$-vector contains negative entries. Moreover, its dual Chow polynomial is
        \[ \H^*_P(x) = 1 + x + x^2 + x^3,\]
    which is unimodal but is not $\gamma$-positive.
    \begin{figure}[ht]
    \centering
    \begin{tikzpicture}
    [scale=0.65,auto=center,every node/.style={circle,scale=0.8, fill=black, inner sep=2.7pt}]
    \tikzstyle{edges} = [thick];

    \node[] (s)  at (0,0) {};

    \node[] (a1) at (-1.5,1.4) {};
    \node[] (a2) at (1.5,1.4) {};

    \node[] (b1) at (-1.5,2.8) {};
    \node[] (b2) at (1.5,2.8) {};

    \node[] (c1) at (-1.5,4.2) {};
    \node[] (c2) at (1.5,4.2) {};

    \node[] (t)  at (0,5.6) {};

    \draw[edges] (s) -- (a1);
    \draw[edges] (s) -- (a2);

    \draw[edges] (a1) -- (b1);
    \draw[edges] (a2) -- (b2);

    \draw[edges] (b1) -- (c1);
    \draw[edges] (b2) -- (c2);

    \draw[edges] (c1) -- (t);
    \draw[edges] (c2) -- (t);

    \end{tikzpicture}
    \caption{A rank $4$ poset satisfying the M\"obius sign condition, but whose dual Chow polynomial is not $\gamma$-positive.}
    \label{fig:mobius-sign-not-gamma-positive}
\end{figure}
\end{example}

\begin{remark}[Dual Chow polynomials of partitions lattices of types $\mathrm{A}$ and $\mathrm{B}$]
    Using \Cref{thm:gamma exp} and following \cite{stump}, we can the express dual Chow polynomials of the set partition lattice $\Pi_n$ similar to \cite[Theorem~3.1]{stump} (using weak ascents instead of descents).
    For the set partition lattice of type $\mathrm{B}$, that is the lattice of intersections of the Coxeter arrangement of type $\mathrm{B}$, we can modify \cite[Theorem~B]{degen-henetmayr-misinova-pielasa-rieg} in a similar manner to.
\end{remark}

\section{Dual Chow polynomials under poset operations}

\subsection{Ordinal sums}\label{subsec:ordinal-sums}

Recall that, if $P$ and $Q$ are two posets, their \emph{ordinal sum} $P\oplus Q$ is the poset on the disjoint union $P\sqcup Q$ that preserves the order relations of $P$ and $Q$, and imposes $s\leq t$ for every $s\in P$ and $t\in Q$. If $P$ and $Q$ are graded bounded posets, we define their \emph{join} $P*Q$ by identifying $\widehat{1}_P$ with $\widehat{0}_Q$. Equivalently,
\[
    P*Q=P\oplus\bigl(Q\setminus\{\widehat{0}_Q\}\bigr)
    =
    \bigl(P\setminus\{\widehat{1}_P\}\bigr)\oplus Q.
\]
The rank of $P*Q$ is the sum of the ranks of $P$ and $Q$. We also write $\aug(P)$ for the poset obtained from $P$ by adding a new minimum element.

We shall repeatedly use the following recursive form of the definition of $\H^*$. Since $\H^*=-(\overline{(\chi^{\rev})^{\sgn}})^{-1}$, for every $s<t$ one has
\begin{equation}\label{eq:dual-chow-recursion}
    \H^*_{st}(x)
    =
    \sum_{s<w\leq t}
    \overline{(\chi^{\rev})^{\sgn}}_{sw}(x)\,\H^*_{wt}(x).
\end{equation}

We start with a simple observation about the characteristic function of a join. If $s\in P\setminus\{\widehat{1}_P\}$ and $t\in Q$, then, in the poset $P*Q$,
\begin{equation}\label{eq:chi-rev-join}
    \chi^{\rev}_{st}(x)
    =
    \chi^{\rev}_{s,\widehat{1}_P}(x).
\end{equation}
Indeed, the M\"obius function of $P*Q$ satisfies $\mu_{su}=0$ for every $u\in Q\setminus\{\widehat{0}_Q\}$, and therefore
\[
    \chi^{\rev}_{st}(x)
    =
    \sum_{s\leq u\leq t} x^{\rho_{su}}\mu_{su}
    =
    \sum_{s\leq u\leq \widehat{1}_P} x^{\rho_{su}}\mu_{su}
    =
    \chi^{\rev}_{s,\widehat{1}_P}(x).
\]

\begin{lemma}\label{lem:dual-chow-augmentation}
    Let $P$ be a graded bounded poset. Then
    \[
        \H^*_{\aug(P)}(x)
        =
        \sum_{w\in P}(-1)^{\rho(w)}\H^*_{w,\widehat{1}_P}(x).
    \]
\end{lemma}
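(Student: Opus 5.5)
The plan is to compute $\H^*_{\aug(P)}(x)$ directly from the recursion \eqref{eq:dual-chow-recursion}, applied to the interval from the new minimum to the top. Write $\widehat{0}'$ for the new minimum of $\aug(P)$, so that $\widehat{0}'\lessdot\widehat{0}_P$ and $\rho_{\widehat{0}'w}=1+\rho(w)$ for every $w\in P$. Here $\rho(w)=\rho_{\widehat{0}_P w}$ is the rank in $P$. Every element $w>\widehat{0}'$ of $\aug(P)$ lies in $P$, and the interval $[w,\widehat{1}_P]$ is the same in $P$ and in $\aug(P)$. So \eqref{eq:dual-chow-recursion} gives
\[
    \H^*_{\aug(P)}(x)=\sum_{w\in P}\overline{(\chi^{\rev})^{\sgn}}_{\widehat{0}'w}(x)\,\H^*_{w,\widehat{1}_P}(x),
\]
and it remains to identify the coefficients $\overline{(\chi^{\rev})^{\sgn}}_{\widehat{0}'w}(x)$.

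The key step is computing $\chi^{\rev}_{\widehat{0}'w}$. This is really the special case of \eqref{eq:chi-rev-join}, since $\aug(P)$ is the join of a rank-one chain with $P$. Directly: the Möbius function of $\aug(P)$ satisfies $\mu_{\widehat{0}'\widehat{0}'}=1$ and $\mu_{\widehat{0}'\widehat{0}_P}=-1$. For every $u>\widehat{0}_P$ we have $\mu_{\widehat{0}'u}=0$, because the open interval $(\widehat{0}',u)$ has the minimum $\widehat{0}_P$. Hence, for every $w\in P$,
\[
    \chi^{\rev}_{\widehat{0}'w}(x)=\sum_{\widehat{0}'\leq u\leq w}x^{\rho_{\widehat{0}'u}}\mu_{\widehat{0}'u}=1-x .
\]
Applying the sign twist multiplies this by $(-1)^{1+\rho(w)}$, giving $(-1)^{\rho(w)}(x-1)$. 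Dividing by $x-1$, which is legitimate since $\widehat{0}'<w$, yields $\overline{(\chi^{\rev})^{\sgn}}_{\widehat{0}'w}(x)=(-1)^{\rho(w)}$.

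Substituting this into the recursion gives the claimed formula. The term $w=\widehat{1}_P$ contributes $(-1)^{\rho(\widehat{1}_P)}\H^*_{\widehat{1}_P\widehat{1}_P}$, and here we use $\H^*_{ss}=-(\overline{\kappa}_{ss})^{-1}=1$.

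The only point needing care is the vanishing of $\mu_{\widehat{0}'u}$ for $u>\widehat{0}_P$, together with the sign bookkeeping: the rank in $\aug(P)$ exceeds the rank in $P$ by one. That extra factor $-1$ is exactly cancelled by the $-1$ in $1-x=-(x-1)$. No other obstacle is expected.
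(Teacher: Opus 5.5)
Your proposal is correct and follows essentially the same route as the paper. Both arguments apply the recursion \eqref{eq:dual-chow-recursion} on $[\widehat{0}',\widehat{1}_P]$ and show that $\overline{(\chi^{\rev})^{\sgn}}_{\widehat{0}'w}=(-1)^{\rho(w)}$ using $\chi^{\rev}_{\widehat{0}'w}=1-x$. The paper reads this identity off from \eqref{eq:chi-rev-join} via $\aug(P)=C_2*P$, whereas you derive it directly from the vanishing of $\mu_{\widehat{0}'u}$ for $u>\widehat{0}_P$.
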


\begin{proof}
    Let $\widehat{0}$ be the new minimum element of $\aug(P)$. Since $\aug(P)=C_2*P$, equation~\eqref{eq:chi-rev-join} gives
    \[
        \chi^{\rev}_{\widehat{0},w}(x)=\chi^{\rev}_{C_2}(x)=1-x
    \]
    for every $w\in P$. Thus
    \[
        \overline{(\chi^{\rev})^{\sgn}}_{\widehat{0},w}(x)=(-1)^{\rho_{\widehat{0}_P,w}}
    \]
    for every $w\in P$. Applying \eqref{eq:dual-chow-recursion} to the interval $[\widehat{0},\widehat{1}_P]$ in $\aug(P)$ gives the desired identity.
\end{proof}

The following can be interpreted as a dual version of part of a result appearing in \cite[Proposition~4.5]{ferroni-matherne-vecchi}.

\begin{proposition}\label{prop:dual-chow-join}
    Let $P$ and $Q$ be graded bounded posets, and assume that $P$ has positive rank. Then
    \[
        \H^*_{P*Q}(x)
        =
        \H^*_P(x)\,\H^*_{\aug(Q)}(x).
    \]
\end{proposition}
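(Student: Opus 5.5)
Write $R=P*Q$ and let $\widehat{1}_P=\widehat{0}_Q=:m$. The plan is to expand $\H^*_{P*Q}(x)$ via the recursion \eqref{eq:dual-chow-recursion}, or equivalently via the chain formula of Theorem~\ref{thm:iterative}, and split each chain according to where it crosses $m$. Let $K:=\overline{(\chi^{\rev})^{\sgn}}$. By \eqref{eq:chi-rev-join}, for $s\in P\setminus\{m\}$ and $t\in Q\setminus\{m\}$ we have $\chi^{\rev}_{st}=\chi^{\rev}_{sm}$. Hence
\[
K_{st}=(-1)^{\rho_{mt}}\,K_{sm}\cdot(\text{sign}),
\]
and more precisely $K_{st}=(-1)^{\rho_{st}}\chi^{\rev}_{sm}/(x-1)=(-1)^{\rho_{mt}}K_{sm}$. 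Thus any entry of $K$ that jumps from $P\setminus\{m\}$ to $Q\setminus\{m\}$ factors as the corresponding jump to $m$ times a sign $(-1)^{\rho_{mt}}$. This is exactly the shape appearing in Lemma~\ref{lem:dual-chow-augmentation}: in $\aug(Q)$ the new bottom $\widehat{0}'$ satisfies $K_{\widehat{0}',t}=(-1)^{\rho_{mt}}$ for $t\in Q$.

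Next I would use the inverse formula $\H^*=-K^{-1}$ as a sum over chains (Lemma~\ref{lem:inverses-as-sums-over-chains}, after normalizing the diagonal $-1$). This gives, up to a global sign, $\H^*_{st}=\sum_{s=c_0<\dots<c_k=t}\prod K_{c_{i-1}c_i}$ with the appropriate signs. Every chain from $\widehat{0}$ to $\widehat{1}_Q$ in $R$ has a unique last element $c_j\in P$; in particular $c_j=\widehat{0}$ if no interior element lies in $P$, which is possible since $P$ has positive rank so $\widehat{0}\neq m$. The chain then either passes through $m$ or jumps from $c_j\in P\setminus\{m\}$ to some $c_{j+1}\in Q\setminus\{m\}$.

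I would group each chain that jumps $c_j\to c_{j+1}$ together with the chain that inserts $m$. Using $K_{c_jc_{j+1}}=(-1)^{\rho_{mc_{j+1}}}K_{c_jm}$, a jump chain contributes (the part of the chain ending at $m$) $\times$ ($(-1)^{\rho_{m,c_{j+1}}}$ times the chain in $Q$ from $c_{j+1}$ to $\widehat{1}_Q$). The chain through $m$ contributes (part ending at $m$) $\times$ (chain in $Q$ from $m$). Summing over all tails, the $Q$-factor becomes
\[
\H^*_{m,\widehat{1}_Q}+\sum_{w\in Q\setminus\{m\}}(-1)^{\rho_{mw}}\H^*_{w\widehat{1}_Q}=\sum_{w\in Q}(-1)^{\rho_Q(w)}\H^*_{w\widehat{1}_Q},
\]
which equals $\H^*_{\aug(Q)}$ by Lemma~\ref{lem:dual-chow-augmentation}. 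The head factor, summed over chains from $\widehat{0}$ to $m$, is $\H^*_P$.

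A cleaner way to organize the same computation is to prove by induction on $s\in P\setminus\{m\}$, using \eqref{eq:dual-chow-recursion}, that $\H^*_{s\widehat{1}_Q}=\H^*_{sm}\,\H^*_{\aug(Q)}$. Split the recursion sum over $w\in P\setminus\{s\}$, $w\neq m$ (induction), $w=m$, and $w\in Q\setminus\{m\}$ (use the factorization of $K$). The $w=m$ and $w\in Q\setminus\{m\}$ terms combine to $K_{sm}\H^*_{\aug(Q)}$, and the remaining terms reassemble $\H^*_{sm}$ by the recursion in $P$. The main obstacle is the bookkeeping of the signs $(-1)^{\rho}$ and the diagonal $-1$ in $K$, in particular checking that the $w=m$ term carries exactly the coefficient $\H^*_{m\widehat{1}_Q}$ with sign $+1$, which matches the $w=m$ term of Lemma~\ref{lem:dual-chow-augmentation}. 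The base case needs $s<m$, which is why positive rank of $P$ is required.
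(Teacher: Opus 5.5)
Your proposal is correct and, in its second (recursive) form, follows the paper's proof. The paper likewise applies \eqref{eq:dual-chow-recursion} to $P*Q$, handles first steps into $P\setminus\{\widehat{0}_P,\widehat{1}_P\}$ by induction (on the rank of $P$, applied to $[w,\widehat{1}_P]*Q$, which is the same as your induction on $s$), and merges the steps into $Q$ using $\overline{(\chi^{\rev})^{\sgn}}_{\widehat{0}_P,w}=(-1)^{\rho_{\widehat{0}_Q,w}}\,\overline{(\chi^{\rev})^{\sgn}}_{P}$ together with Lemma~\ref{lem:dual-chow-augmentation}, and your sign bookkeeping (including $\H^*_{mm}=1$ for the $w=m$ term) checks out.
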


\begin{proof}
    We argue by induction on the rank of $P$. If $P$ has rank $1$, then $P=C_2$, and $P*Q=\aug(Q)$. Since $\H^*_{C_2}(x)=1$, the identity follows.

    Assume now that the rank of $P$ is greater than $1$. We view $P*Q$ as the poset obtained by identifying $\widehat{1}_P$ with $\widehat{0}_Q$. Applying \eqref{eq:dual-chow-recursion} to $P*Q$, and splitting the sum according to whether the first element after $\widehat{0}_P$ lies in $P\setminus\{\widehat{0}_P,\widehat{1}_P\}$ or in $Q$, gives
    \begin{align*}
        \H^*_{P*Q}(x)
        &=
        \sum_{\widehat{0}_P<w<\widehat{1}_P}
        \overline{(\chi^{\rev})^{\sgn}}_{\widehat{0}_P,w}(x)
        \H^*_{[w,\widehat{1}_P]*Q}(x) \\
        &\quad+
        \overline{(\chi^{\rev})^{\sgn}}_{P}(x)
        \sum_{w\in Q}(-1)^{\rho_{\widehat{0}_Q,w}}\H^*_{w,\widehat{1}_Q}(x).
    \end{align*}
    By induction, $\H^*_{[w,\widehat{1}_P]*Q}(x)=\H^*_{w,\widehat{1}_P}(x)\H^*_{\aug(Q)}(x)$. Also, by Lemma~\ref{lem:dual-chow-augmentation},
    \[
        \sum_{w\in Q}(-1)^{\rho_{\widehat{0}_Q,w}}\H^*_{w,\widehat{1}_Q}(x)
        =
        \H^*_{\aug(Q)}(x).
    \]
    Hence
    \begin{align*}
        \H^*_{P*Q}(x)
        &=
        \H^*_{\aug(Q)}(x)
        \sum_{\widehat{0}_P<w<\widehat{1}_P}
        \overline{(\chi^{\rev})^{\sgn}}_{\widehat{0}_P,w}(x)
        \H^*_{w,\widehat{1}_P}(x)\\
        &\quad
        +\overline{(\chi^{\rev})^{\sgn}}_{P}(x)\H^*_{\aug(Q)}(x).
    \end{align*}
    Finally, applying \eqref{eq:dual-chow-recursion} inside $P$ gives
    \[
        \H^*_P(x)
        =
        \sum_{\widehat{0}_P<w<\widehat{1}_P}
        \overline{(\chi^{\rev})^{\sgn}}_{\widehat{0}_P,w}(x)
        \H^*_{w,\widehat{1}_P}(x)
        +
        \overline{(\chi^{\rev})^{\sgn}}_{P}(x).
    \]
    Substituting this into the previous expression proves the result.
\end{proof}

\begin{remark}
    In \cite[Proposition~4.5]{ferroni-matherne-vecchi} the authors proved that $\H_{P*Q}(x) = \H_P(x)\cdot G_Q(x)$. The analogue in our dual setting would be $\H^*_{P*Q}(x) = \H^*_P(x)\cdot \F^*_Q(x)$, but that does not hold in general, simply because $\H^*_{\aug(Q)}(x) \neq \F^*_Q(x)$.
\end{remark}

\subsection{Poset duality}\label{subsec:poset-duality}

We define the \emph{augmentation from the top} of a bounded poset $P$ to be the poset obtained by adding a new maximum element; we denote it by $\aug^*(P)$. Thus
\[
    \aug^*(P)=P*C_2.
\]

\begin{corollary}\label{cor:dual-chow-top-augmentation}
    Let $P$ be a graded bounded poset of positive rank. Then
    \[
        \H^*_{\aug^*(P)}(x)=0.
    \]
\end{corollary}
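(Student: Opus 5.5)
The plan is to combine Proposition~\ref{prop:dual-chow-join} with a direct evaluation of $\H^*_{\aug(C_2)}(x)$. Since $\aug^*(P)=P*C_2$ and $P$ has positive rank, Proposition~\ref{prop:dual-chow-join} applies with $Q=C_2$ and gives
\[
    \H^*_{\aug^*(P)}(x)=\H^*_P(x)\,\H^*_{\aug(C_2)}(x).
\]
It therefore suffices to show that $\H^*_{\aug(C_2)}(x)=0$. Here $\aug(C_2)=C_3$ is the chain of rank $2$.

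To evaluate this factor I will use Lemma~\ref{lem:dual-chow-augmentation} with $P$ replaced by $C_2=\{\widehat{0}_{C_2}<\widehat{1}_{C_2}\}$. This gives
\[
    \H^*_{\aug(C_2)}(x)=\H^*_{\widehat{0}_{C_2},\widehat{1}_{C_2}}(x)-\H^*_{\widehat{1}_{C_2},\widehat{1}_{C_2}}(x).
\]
Now $\H^*_{C_2}(x)=1$, as already used in the proof of Proposition~\ref{prop:dual-chow-join}. Also, on a one-element interval, $\H^*_{ss}=-(\overline{\kappa}_{ss})^{-1}=-(-1)^{-1}=1$. Hence the difference is $1-1=0$.

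As a sanity check, the same value follows from Theorem~\ref{thm:iterative} applied to $C_3=\{0<a<1\}$. The chain starting at $c_0=0$ and jumping directly to $1$ contributes $\mu_{01}\cdot\frac{x^2-x}{x-1}=0\cdot x=0$. The chain through $a$ contributes $1\cdot(-1)(1)(-1)(1)=1$, with each factor $\frac{x-x}{x-1}=0$ killing it unless it is the $\mu_{c_0c_1}$ term, so the bookkeeping needs care. The chain with $c_0=a$ contributes $\mu_{0a}\mu_{a1}\cdot\frac{x-x}{x-1}=0$. I would rely on the lemma-based computation rather than this messy chain count.

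The only real care point is the diagonal convention $\H^*_{ss}=1$ and the sign $(-1)^{\rho(w)}$ in Lemma~\ref{lem:dual-chow-augmentation}, which must be read with $\rho$ measured from $\widehat{0}_Q$ as in the proof of Proposition~\ref{prop:dual-chow-join}. Once these are fixed, the vanishing follows immediately.
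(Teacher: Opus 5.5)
Your proof is correct and is exactly the paper's argument: apply Proposition~\ref{prop:dual-chow-join} with $Q=C_2$, then use Lemma~\ref{lem:dual-chow-augmentation} to get $\H^*_{\aug(C_2)}(x)=\H^*_{C_2}(x)-1=0$. Your chain-formula sanity check is muddled as written: in fact every chain contributes $0$ there, including the $m=0$ term $\mu_{\widehat{0}\widehat{1}}=0$, which you omitted. Since you do not rely on that check, it does not affect the proof.
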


\begin{proof}
    By Proposition~\ref{prop:dual-chow-join}, we have
    \(
        \H^*_{\aug^*(P)}(x)
        =
        \H^*_{P*C_2}(x)
        =
        \H^*_P(x)\H^*_{\aug(C_2)}(x).
    \)
    Note that Lemma~\ref{lem:dual-chow-augmentation} gives
    \(
        \H^*_{\aug(C_2)}(x)
        =
        \H^*_{C_2}(x)-1=0,
    \)
    because $\H^*_{C_2}(x)=1$.
\end{proof}

A result by Br\"and\'en and Vecchi in \cite[Corollary~4.2]{branden-vecchi-uniform} guarantees that left augmented Chow polynomials are invariant under poset duality. The dual analogue of that statement is also true.

\begin{proposition}\label{prop:Fstar-poset-duality}
    Let $P$ be a graded bounded poset, and let $P^*$ be its dual poset. Then
    \[
        \F^*_P(x)=\F^*_{P^*}(x).
    \]
\end{proposition}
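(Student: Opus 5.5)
The most direct route is the $\gamma$-expansion of $\F^*_P$ from Theorem~\ref{thm:gamma exp}:
\[
    \F^*_P(x)
    =
    \sum_{\substack{S\subseteq[r-1]\\ S\text{ stable}}}
    \beta_P(S^c)\,x^{|S|}(1+x)^{r-2|S|}.
\]
Here $\beta_P(S^c)$ is indexed by the complement, so we need the standard behaviour of flag $f$- and $h$-vectors under poset duality. Maximal chains and their rank sets correspond under $t\mapsto t$ with rank reversed, so for every $T\subseteq[r-1]$ we have $\alpha_{P^*}(T)=\alpha_P(r-T)$, where $r-T=\{r-i: i\in T\}$. By inclusion--exclusion this gives $\beta_{P^*}(T)=\beta_P(r-T)$.

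Substituting into the expansion for $P^*$ (which also has rank $r$), the summand indexed by $S$ becomes
\[
    \beta_P\bigl(r-S^c\bigr)\,x^{|S|}(1+x)^{r-2|S|}.
\]
Note that $r-S^c=(r-S)^c$ inside $[r-1]$. The map $S\mapsto r-S$ is a bijection of $[r-1]$ onto itself. It preserves stability, since consecutive integers go to consecutive integers, and it preserves cardinality. Reindexing the sum by $S'=r-S$ therefore yields exactly the expansion for $\F^*_P(x)$, which proves the claim.

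The only point needing care is that Theorem~\ref{thm:gamma exp} and the $\aaa\bbb$-index machinery assume the intervals are graded with respect to $\rho$. This is satisfied here, since $P$ is graded bounded and so is $P^*$ with $\rho^*$. The flag vectors are also defined with respect to ranks from $\widehat{0}$, and under duality these become ranks from $\widehat{1}$. This is exactly the reversal $i\mapsto r-i$ used above.

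An alternative check works at the level of $\aaa\bbb$-words. The flag $h$-vector statement says that $\Psi_{P^*}$ is the reversal of the word $\Psi_P$. In the $\gamma$-computation, the specialization $\omega_{\operatorname{ev}}(m_S\bbb)$ depends only on $|S^c|$ and on stability of $S^c$, with no boundary condition (unlike the $\H^*$ case, where $r-1\notin T$ breaks the symmetry). Both features are reversal-invariant.

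I expect no real obstacle. The main risk is the bookkeeping that the extra appended $\bbb$ removes the asymmetric condition present in the formula for $\H^*_P$. This is precisely why the statement holds for $\F^*$ but not for $\H^*$.
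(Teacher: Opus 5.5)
Your proof is correct, but it takes a different route from the paper's.

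The paper never passes through the flag $h$-vector. It starts from the closed form $(\F^*)^{-1}_{st}(x)=(-1)^{\rho_{st}}(1+x+\cdots+x^{\rho_{st}})$ established just before. By Lemma~\ref{lem:inverses-as-sums-over-chains}, $\F^*_P(x)$ is then a signed sum over chains $\widehat{0}=s_0<\cdots<s_m=\widehat{1}$ of products of factors that depend only on the rank jumps $\rho_{s_{j-1}s_j}$. Reversing chains is a weight-preserving bijection onto the chains of $P^*$, which finishes the proof.

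That argument is shorter and more elementary. It uses only additivity of $\rho$, so it holds verbatim for arbitrary weakly ranked bounded posets. Your argument relies on Theorem~\ref{thm:gamma exp}, hence on Theorem~\ref{thm:expsi-to-dualchow} and the $\aaa\bbb$-index. These require every interval to be graded with respect to $\rho$, which is harmless under the stated hypothesis, as you correctly check.

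In exchange, your route isolates exactly why duality invariance holds for $\F^*$ and fails for $\H^*$. Both are linear in the flag $h$-vector via their $\gamma$-expansions, and $\beta_{P^*}(T)=\beta_P(r-T)$. The only obstruction to symmetry under $T\mapsto r-T$ is the boundary condition $r-1\notin T$ in the expansion of $\H^*$, and the appended $\bbb$ removes it. Concretely, for $P$ the Boolean lattice of rank $2$, one gets $\H^*_{\aug(P)}(x)=x$. On the other hand, $\H^*_{\aug^*(P)}(x)=0$ by Corollary~\ref{cor:dual-chow-top-augmentation}, and these two posets are dual to each other.
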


\begin{proof}
    Recall that, for the characteristic kernel, the dual right augmented Chow function satisfies
    \[
        (\F^*)^{-1}_{st}(x)=(-1)^{\rho_{st}}(1+x+\cdots+x^{\rho_{st}}).
    \]
    By Lemma~\ref{lem:inverses-as-sums-over-chains}, $\F^*_{st}(x)$ can therefore be written as a signed sum over chains from $s$ to $t$, where the contribution of a chain depends only on the ranks of its successive intervals. Reversing chains gives a weight-preserving bijection between chains in $P$ from $\widehat{0}$ to $\widehat{1}$ and chains in $P^*$ from $\widehat{0}_{P^*}$ to $\widehat{1}_{P^*}$. Hence the two polynomials agree.
\end{proof}

Ferroni, Matherne, and Vecchi \cite[Corollary~4.6]{ferroni-matherne-vecchi} proved that all left augmented Chow polynomials are themselves the Chow polynomial of suitable constructed posets. Conversely, Br\"and\'en and Vecchi proved that all Chow polynomials are left augmented Chow polynomials of other posets \cite[Theorem~1.2]{branden-vecchi-uniform}. The following provides a counterpart of Br\"and\'en and Vecchi's result.

\begin{proposition}\label{prop:Fstar-top-augmentation}
    Let $P$ be a non-trivial graded bounded poset. Then
    \[
        \H^*_P(x) = \frac{1}{x}\,\F^*_{\aug^*(P)}(x).
    \]
\end{proposition}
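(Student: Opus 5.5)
The plan is to use the last identity of Proposition~\ref{prop:Hstar-from-Fstar}, applied to the poset $Q=\aug^*(P)=P*C_2$. Write $\widehat{1}'$ for the new maximum of $Q$ and $\widehat{1}=\widehat{1}_P$ for the old one. Since $\widehat{0}<\widehat{1}'$, that identity gives
\[
    x\,\H^*_{\widehat{0}\widehat{1}'}(x)=\sum_{\widehat{0}\leq w\leq \widehat{1}'}(-1)^{\rho_{w\widehat{1}'}}\F^*_{\widehat{0}w}(x).
\]
The left-hand side vanishes by Corollary~\ref{cor:dual-chow-top-augmentation}, because $P$ is non-trivial and hence has positive rank. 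Solving for the term $w=\widehat{1}'$ gives
\[
    \F^*_{\aug^*(P)}(x)=-\sum_{w\in P}(-1)^{\rho_{w\widehat{1}'}}\F^*_{\widehat{0}w}(x)=\sum_{w\in P}(-1)^{\rho_{w\widehat{1}}}\F^*_{\widehat{0}w}(x),
\]
where we used $\rho_{w\widehat{1}'}=\rho_{w\widehat{1}}+1$. Here each $\F^*_{\widehat{0}w}$ is computed inside $P$, because intervals $[\widehat{0},w]$ with $w\in P$ are unchanged by the augmentation.

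Next, apply the same identity of Proposition~\ref{prop:Hstar-from-Fstar} inside $P$ itself, on the interval $[\widehat{0},\widehat{1}]$. Since $\widehat{0}<\widehat{1}$, it gives
\[
    x\,\H^*_P(x)=\sum_{w\in P}(-1)^{\rho_{w\widehat{1}}}\F^*_{\widehat{0}w}(x).
\]
This is exactly the sum obtained above. Hence $\F^*_{\aug^*(P)}(x)=x\,\H^*_P(x)$, and dividing by $x$ gives the claim.

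The only delicate points are bookkeeping. First, the weak rank of $\aug^*(P)$ has to restrict to that of $P$ and assign rank $1$ to the new cover $\widehat{1}\lessdot\widehat{1}'$, so that the sign shifts by exactly one. Second, Corollary~\ref{cor:dual-chow-top-augmentation} needs positive rank, which is what the non-triviality hypothesis provides; this hypothesis is also what makes the ``$s<t$'' version of Proposition~\ref{prop:Hstar-from-Fstar} applicable to $P$.

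If one prefers to avoid the corollary, an alternative route is to use the chain expansion of $\F^*$ from Lemma~\ref{lem:inverses-as-sums-over-chains}, with $(\F^*)^{-1}_{st}=(-1)^{\rho_{st}}[\rho_{st}+1]_x$, and compare it directly with the chain formula of Theorem~\ref{thm:iterative}. I expect the first route to be shorter, and I do not anticipate a genuine obstacle.
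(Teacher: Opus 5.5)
Your proposal is correct and follows the paper's proof: you apply the last identity of Proposition~\ref{prop:Hstar-from-Fstar} to $\aug^*(P)$, use Corollary~\ref{cor:dual-chow-top-augmentation} to make the left-hand side vanish, isolate the $w=\widehat{1}_{\aug^*(P)}$ term, and then recognize the remaining sum as $x\,\H^*_P(x)$ by applying the same identity inside $P$. You also write out the sign shift $\rho_{w\widehat{1}'}=\rho_{w\widehat{1}}+1$, which the paper uses without stating it.
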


\begin{proof}
    Let $Q=\aug^*(P)$, and let $\widehat{1}_Q$ denote the new maximum element. By Corollary~\ref{cor:dual-chow-top-augmentation}, we have $\H^*_Q(x)=0$. Applying Proposition~\ref{prop:Hstar-from-Fstar} to the interval $[\widehat{0}_P,\widehat{1}_Q]$ in $Q$ gives
    \[
        0=x\H^*_Q(x)
        =
        \sum_{\widehat{0}_P\leq w\leq \widehat{1}_Q}
        (-1)^{\rho_{w,\widehat{1}_Q}}\F^*_{\widehat{0}_P,w}(x).
    \]
    The summand corresponding to $w=\widehat{1}_Q$ is $\F^*_{\aug^*(P)}(x)$. For every $w\in P$, the interval $[\widehat{0}_P,w]$ is the same whether computed in $P$ or in $Q$. Hence
    \[
        \F^*_{\aug^*(P)}(x)
        =
        \sum_{w\in P}(-1)^{\rho_{w,\widehat{1}_P}}\F^*_{\widehat{0}_P,w}(x).
    \]
    Applying Proposition~\ref{prop:Hstar-from-Fstar} once more, now to the interval $[\widehat{0}_P,\widehat{1}_P]$ in $P$, gives
    \[
        \sum_{w\in P}(-1)^{\rho_{w,\widehat{1}_P}}\F^*_{\widehat{0}_P,w}(x)=x\H^*_P(x),
    \]
    because $P$ has positive rank. This proves the desired identity.
\end{proof}

\subsection{Cartesian products}

We are now interested in the dual Chow polynomial of the product of two posets. We recall that the product of two weakly ranked bounded posets $P$ and $Q$ is the weakly ranked bounded poset denoted as $P \times Q$ which is the poset over the set $P \times Q$ with relations $(s,t) \leq (s',t') $ if and only if $s \leq s'$ in $P$ and $t \leq t'$ in $Q$, and whose weak rank function $\rho_{P\times Q}$ is given by $\rho_{P\times Q}(s,t) = \rho_P(s) + \rho_Q(t)$ (note that this extends to all intervals in $P\times Q$ in the obvious way).

For a poset $P$ and an element $s\in P$, let us denote $P_{\leq s}:= \{x\in P : x\leq s\}$. Define $P_{\geq s}$ analogously. Note that when $P$ is bounded, one has $P_{\leq s} = [\widehat{0},s]$ and $P_{\geq s} = [s,\widehat{1}]$. 

\begin{proposition}
    Let $P$ and $Q$ be two bounded posets. The dual Chow polynomial of their Cartesian product can be computed as follows:
    \[ \H^*_{P\times Q}(x) = \H^*_P(x)\, \H^*_Q(x) + x \sum_{\substack{s\in P\\ s\neq \widehat{1}_P}} \sum_{\substack{t\in Q\\ t\neq \widehat{1}_Q}} \H^*_{P_{\leq s} \times Q_{\leq t}}(x) \H^*_{P_{\geq s}}(x) \H^*_{Q_{\geq t}}(x).\]
\end{proposition}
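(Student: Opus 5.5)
The plan is to argue by induction on $\rho(P)+\rho(Q)$, using only the recursive definition of $\H^*$ and the fact that the relevant kernel is multiplicative on products. Throughout, write $\kappa=(\chi^{\rev})^{\sgn}$, and for $s\in P$, $t\in Q$ put $A_{s,t}:=\H^*_{P_{\leq s}\times Q_{\leq t}}(x)$.

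The first step is to check multiplicativity. In $P\times Q$ the Möbius function factors, $\mu_{(a,b),(c,d)}=\mu_{ac}\mu_{bd}$, and the weak rank is additive. Hence
\[
\chi^{\rev}_{(a,b),(c,d)}=\chi^{\rev}_{ac}\,\chi^{\rev}_{bd}
\qquad\text{and}\qquad
\kappa_{(a,b),(c,d)}=\kappa_{ac}\,\kappa_{bd}.
\]
The second step is to record the two recursions for $\H^*$. From $\H^*=-\overline{\kappa}^{-1}$, the relation $\H^*\overline{\kappa}=-\delta$ gives, for $s<t$,
\[
(x-1)\,\H^*_{st}=\sum_{s\leq w<t}\H^*_{sw}\,\kappa_{wt}.
\]
The relation $\overline{\kappa}\H^*=-\delta$ gives the mirror recursion \eqref{eq:dual-chow-recursion}, $\H^*_{st}=\sum_{s<w\leq t}\overline{\kappa}_{sw}\H^*_{wt}$.

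The third step is to apply the first recursion to $[\widehat{0},\widehat{1}]$ in $P\times Q$ and split the elements $w=(s,t)\neq(\widehat{1}_P,\widehat{1}_Q)$ into three types:
\[
s<\widehat{1}_P,\ t=\widehat{1}_Q;\qquad
s=\widehat{1}_P,\ t<\widehat{1}_Q;\qquad
s<\widehat{1}_P,\ t<\widehat{1}_Q.
\]
Their weights are $\kappa_{s\widehat{1}}$, $\kappa_{t\widehat{1}}$, and $\kappa_{s\widehat{1}}\kappa_{t\widehat{1}}=(x-1)^2\overline{\kappa}_{s\widehat{1}}\overline{\kappa}_{t\widehat{1}}$ respectively. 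Every $A_{s,\widehat{1}}$, $A_{\widehat{1},t}$ and $A_{s,t}$ occurring here is the dual Chow polynomial of a product of strictly smaller total rank. By the induction hypothesis each can therefore be replaced by the right-hand side of the proposition. This produces a product term plus an $x\sum\sum$ correction term indexed by pairs $s'\leq s$, $t'\leq t$.

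The fourth step is to collapse the resulting expression, with the goal of matching the claimed identity multiplied by $(x-1)$.
\begin{enumerate}[(a)]
\item The product terms $\H^*_{P_{\leq s}}\H^*_Q$ and $\H^*_P\H^*_{Q_{\leq t}}$ should be resummed with the first recursion in $P$ and in $Q$. Together with the base contributions where $s$ or $t$ equals the minimum, where $\H^*_{C_1}=1$, this should yield $(x-1)\H^*_P\H^*_Q$ plus leftover terms.
\item In the correction terms, exchange the order of summation so that the fixed pair $(s',t')$ sits outside. The inner sums over $s\geq s'$ and $t\geq t'$ then become $\sum_{s'<s\leq \widehat{1}}\overline{\kappa}\ldots$. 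The mirror recursion \eqref{eq:dual-chow-recursion} turns these into $\H^*_{P_{\geq s'}}$ and $\H^*_{Q_{\geq t'}}$, each possibly with a boundary term.
\item The type-three terms with weight $(x-1)^2\overline{\kappa}\,\overline{\kappa}$ combine with the leftovers from (a) and (b). Writing $x=(x-1)+1$ is what should make the coefficients recombine into exactly $(x-1)\cdot x\sum_{s,t}A_{s,t}\H^*_{P_{\geq s}}\H^*_{Q_{\geq t}}$.
\end{enumerate}
Dividing by $x-1$, which is legitimate in $\Z[x]$, completes the induction. The base case is rank $0$ in $P$ or $Q$, where the sum is empty and the formula is trivial.

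The main obstacle is the bookkeeping in the fourth step. One must verify that the boundary terms produced by the mirror recursion, namely the contributions with $s=\widehat{1}_P$ or $t=\widehat{1}_Q$ in the inner sums, cancel exactly against the type-one and type-two contributions. If this direct route becomes unwieldy, I would first try an alternative: use the chain formula of Theorem~\ref{thm:iterative} and project each chain in $P\times Q$ to its coordinate chains. The factor $x$ would then arise from the identity
\[
\frac{x^{a+b}-x}{x-1}=\frac{x^{a}-x}{x-1}+x\cdot\frac{x^{b}-1}{x-1},
\]
which records the first step at which the chain stops being "diagonal".
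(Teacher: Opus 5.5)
Your Step~1 is right, and it is exactly the observation the paper relies on: $\kappa=(\chi^{\rev})^{\sgn}$ is multiplicative on $P\times Q$. The paper then simply invokes Pielasa's product formula for Chow functions of such kernels. You instead try to reprove that formula by induction, and the argument stops where the real work begins. Step~4 is a list of expectations, and the collapse does not happen as you describe.

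Write $A_{u,v}=\H^*_{P_{\leq u}\times Q_{\leq v}}(x)$, $B_u=\H^*_{P_{\geq u}}(x)$ and $C_v=\H^*_{Q_{\geq v}}(x)$. After exchanging summations, your inner sums are $\sum_{u<s\leq\widehat{1}_P}\H^*_{us}\kappa_{s\widehat{1}_P}=xB_u-\kappa_{u\widehat{1}_P}$. This comes from $\H^*\overline{\kappa}=-\delta$, not from the mirror recursion, and the boundary term $-\kappa_{u\widehat{1}_P}$ does not cancel against the type-one and type-two contributions. The product terms give $(x^2-1)\H^*_P\H^*_Q$. Whether or not you also substitute into the type-three terms, what is then left to prove is
\[
\sum_{u<\widehat{1}_P,\ v<\widehat{1}_Q}A_{u,v}\bigl(x(x-1)B_uC_v-x\kappa_{u\widehat{1}_P}C_v-xB_u\kappa_{v\widehat{1}_Q}+\kappa_{u\widehat{1}_P}\kappa_{v\widehat{1}_Q}\bigr)=-(x-1)\H^*_P(x)\H^*_Q(x).
\]
In general this is a genuine linear relation among the unknowns $A_{u,v}$. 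Reindexing plus the one-factor recursions cannot produce it. It requires the product recursion again at every $(w,z)$ strictly below the top, combined with the mirror recursions in $P$ and $Q$; ``writing $x=(x-1)+1$'' does not supply this.

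Your fallback identity is also false as written. The correct version is $\frac{x^{a+b}-x}{x-1}=\frac{x^a-x}{x-1}+x^{a}\frac{x^b-1}{x-1}$, and yours already fails for $a=2$, $b=1$.

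The missing idea is a factorization that makes induction unnecessary. Put $\alpha=\overline{\kappa}^{P}+\delta$ and $\beta=\overline{\kappa}^{Q}+\delta$; both vanish on the diagonal. For $f\in\mathcal{I}(P)$ and $g\in\mathcal{I}(Q)$ set $(f\otimes g)_{(s,t),(s',t')}=f_{ss'}g_{tt'}$, so that $(f\otimes g)(f'\otimes g')=ff'\otimes gg'$. Multiplicativity of $\kappa$ gives
\[
-\overline{\kappa}^{P\times Q}=D-xE,\qquad D=(\delta-\alpha)\otimes(\delta-\beta),\qquad E=\alpha\otimes\beta .
\]
The dual Chow functions of $P$ and $Q$ are $\H^{P}=(\delta-\alpha)^{-1}$ and $\H^{Q}=(\delta-\beta)^{-1}$. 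Hence $D^{-1}=\H^{P}\otimes\H^{Q}$ and $ED^{-1}=(\H^{P}-\delta)\otimes(\H^{Q}-\delta)$. Now evaluate the resolvent identity $(D-xE)^{-1}=D^{-1}+x\,(D-xE)^{-1}ED^{-1}$ on the top interval of $P\times Q$. The factors $\H^{P}-\delta$ and $\H^{Q}-\delta$ kill the terms with $s=\widehat{1}_P$ or $t=\widehat{1}_Q$, and what remains is exactly the proposition. This is essentially Pielasa's theorem. Either cite it, as the paper does, or replace Steps~3--4 with this computation.
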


\begin{proof}
    This follows directly from Pielasa's formula for Chow functions under Cartesian products \cite[Theorem~1.9]{pielasa}, after noticing that the sign-twisted reversed characteristic polynomial of a product of posets is the product of the sign-twisted reversed characteristic polynomials of the factors.
\end{proof}

\subsection{Larson's recursions in the dual case}

We now discuss the behavior of the dual Chow polynomial under truncation. Recall that, if $P$ is a graded bounded poset of positive rank, then $\trunc(P)$ denotes the poset obtained from $P$ by removing its coatoms. Equivalently, $\trunc(P)$ has the same elements as $P$ in ranks strictly smaller than $\rank(P)-1$, together with the same maximum element. Thus $\trunc(P)$ has rank $\rank(P)-1$.

We begin with a refinement at the level of the extended $\aaa\bbb$-index. For an interval $[s,t]$ of $P$, define
\[
    K_{st}(y,\aaa,\bbb)
    =
    \begin{cases}
        1 & \text{if } s=t,\\
        -\Poin_{st}(y)\,\bbb(\aaa-\bbb)^{\rho_{st}-1} & \text{if } s<t,
    \end{cases}
\]
and
\[
    M_{st}(y,\aaa,\bbb)
    =
    \begin{cases}
        1 & \text{if } s=t,\\
        \mu_{st}(-y)^{\rho_{st}-1}(1+y)\bbb(\aaa-\bbb)^{\rho_{st}-1} & \text{if } s<t.
    \end{cases}
\]
For a bounded interval $[s,t]$, we shall also write $K_{[s,t]}$ and $M_{[s,t]}$ when convenient.

\begin{proposition}\label{prop:extended-ab-truncation}
    Let $P$ be a graded bounded poset of rank at least $2$. Then
    \[
        \exapsi_{\trunc(P)}(y,\aaa,\bbb)(\aaa-\bbb)
        =
        (\exapsi\cdot M)_P(y,\aaa,\bbb).
    \]
    Moreover,
    \[
        \expsitilde_{\trunc(P)}(y,\aaa,\bbb)(\aaa-\bbb)
        =
        (\expsitilde\cdot M)_P(y,\aaa,\bbb)
        +(1-\bbb)\,\iota(M_P(y,\aaa,\bbb)).
    \]
\end{proposition}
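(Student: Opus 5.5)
The plan is to prove both identities by expanding everything into sums over chains via \Cref{thm:expsi-via-poincare}. The only input from the truncation is a one-line identity relating the characteristic polynomial of an interval to that of its truncation.

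\textbf{Step 1: the key identity.} For a bounded graded interval $Q=[d,\widehat{1}]$ of rank $k\geq 2$, note that:
\begin{itemize}
\item $\trunc(Q)$ has the same M\"obius values as $Q$ below rank $k-1$;
\item its top value is $\mu_{\trunc(Q)}=\mu_Q+\sum_{c\text{ coatom}}\mu_{dc}$.
\end{itemize}
Comparing $\chi_Q(x)=\sum_u\mu_{du}x^{\rho_{u\widehat{1}}}$ with $\chi_{\trunc(Q)}(x)$ term by term, I expect
\[
\chi_Q(x)=x\,\chi_{\trunc(Q)}(x)-(x-1)\,\mu_Q .
\]
Passing to $\Poin(y)=(-y)^{\rho}\chi(-y^{-1})$, this becomes
\[
\Poin_{\trunc(Q)}(y)=\Poin_Q(y)+\mu_Q(-y)^{k-1}(1+y).
\]
The correction term is exactly the scalar appearing in $M_{d\widehat{1}}$. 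For $k=1$, i.e. $d$ a coatom, the interval disappears in $\trunc(P)$, and the same correction accounts for it, since then $\mu_Q=-1$.

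\textbf{Step 2: expand both sides.} Chains $\cC$ in $[\widehat{0},\widehat{1})$ of $\trunc(P)$ are exactly the chains of $P$ avoiding coatoms. Their weight words have length $r-2$; after right-multiplying by $(\aaa-\bbb)$, they become the $P$-weights (length $r-1$) of the same chains, since position $r-1$ is unoccupied. On the left-hand side I therefore get
\[
\sum_{\cC}\Poin^{\cC\setminus\{\max\cC\}}\cdot\Poin_{\trunc[\max\cC,\widehat{1}]}(y)\cdot w_0^\cC\,\wt^\cC_P.
\]
On the right-hand side,
\[
(\exapsi\cdot M)_P=\exapsi_P+\sum_{w<\widehat{1}}\exapsi_{\widehat{0}w}\,M_{w\widehat{1}},
\]
and $\exapsi_{\widehat{0}w}\,\bbb(\aaa-\bbb)^{\rho_{w\widehat{1}}-1}$ is precisely the sum of the $P$-weights of chains $\cC'\cup\{w\}$ with $\cC'\subset[\widehat{0},w)$, with Poincar\'e factor $\Poin^{\cC'}_{\widehat{0}w}$. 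Substituting Step~1 into the left-hand side splits each chain's contribution into two parts:
\begin{itemize}
\item the $\Poin_P$-part reproduces the terms of $\exapsi_P$ coming from chains avoiding coatoms;
\item the $\mu$-part reproduces the terms $\exapsi_{\widehat{0}w}M_{w\widehat{1}}$ with $w=\max\cC$ not a coatom.
\end{itemize}

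\textbf{Step 3: the main obstacle.} What remains is to match the chains of $P$ that contain a coatom $c$. These appear in $\exapsi_P$ and must cancel against the $M$-terms with $w=c$, where $M_{c\widehat{1}}=-(1+y)\bbb$ since $\mu_{c\widehat{1}}=-1$. A chain whose maximum is $c$ carries the factor $\Poin_{c\widehat{1}}=1+y$ in $\exapsi_P$. The corresponding $M$-term is $\exapsi_{\widehat{0}c}\cdot(-(1+y))\bbb$, which is the same expression with the opposite sign, so these should cancel. I expect the bookkeeping here to be the delicate point: one must check that the boundary letter $w_0^\cC$ and the Poincar\'e products agree exactly, in particular when $\cC=\{c\}$ or when $\widehat{0}$ itself lies in the chain, and also the case where $\cC$ is empty.

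\textbf{Step 4: the second identity.} Apply $\iota$ to the first identity, using $\expsitilde=\iota(\exapsi)$. Deleting the leftmost letter commutes with right multiplication by $(\aaa-\bbb)$ and with the products $\exapsi_{\widehat{0}w}M_{w\widehat{1}}$ whenever the left factor is a nonempty word. The exception is $w=\widehat{0}$, where $\exapsi_{\widehat{0}\widehat{0}}=1$ and $\iota$ acts on $M_P$ itself. Correcting that single term, and using the convention $\expsitilde_{\widehat{0}\widehat{0}}=1$, should yield the extra summand $(1-\bbb)\,\iota(M_P)$. I would verify this correction directly from the definition of $\iota$ on $\bbb(\aaa-\bbb)^{r-1}$.
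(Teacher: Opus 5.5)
Your proposal is correct and follows essentially the same route as the paper. Both arguments decompose chains by their top element, use the identity $\Poin_{\trunc([s,t])}=\Poin_{st}+\mu_{st}(-y)^{\rho_{st}-1}(1+y)$ (whose rank-one case is exactly your coatom cancellation in Step~3), and then apply $\iota$ with the single correction at $w=\widehat{0}$. The paper simply packages your Steps~2--3 as the recursion $\exapsi_P=\sum_{w\neq\widehat{1}}\exapsi_{\widehat{0}w}(-K_{w\widehat{1}})+(\aaa-\bbb)^{r}$, together with the observation that $-K_{st}+M_{st}=0$ whenever $\rho_{st}\leq 1$.
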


\begin{proof}
    We first record the elementary recursion obtained by separating a chain according to its last element before the maximum. For every bounded graded poset $P$ of positive rank,
    \[
        \exapsi_P(y,\aaa,\bbb)
        =
        \sum_{w\neq\widehat{1}}
        \exapsi_{\widehat{0},w}(y,\aaa,\bbb)\,
        \bigl(-K_{w,\widehat{1}}(y,\aaa,\bbb)\bigr)
        +
        (\aaa-\bbb)^{\rank(P)}.
    \]
    Indeed, if a chain in $[\widehat{0},\widehat{1})$ is nonempty, let $w$ be its maximal element; the factor $-K_{w,\widehat{1}}$ records the last Poincar\'e factor, the letter $\bbb$ at the rank of $w$, and the remaining letters $\aaa-\bbb$. The last term corresponds to the empty chain.

    We shall use the standard identity
    \[
        \Poin_{\trunc([s,t])}(y)
        =
        \Poin_{st}(y)+\mu_{st}(-y)^{\rho_{st}-1}(1+y)
    \]
    for intervals of rank greater than $1$. It implies
    \[
        -K_{\trunc([s,t])}(y,\aaa,\bbb)(\aaa-\bbb)
        =
        -K_{st}(y,\aaa,\bbb)+M_{st}(y,\aaa,\bbb)
    \]
    whenever $\rho_{st}>1$. For intervals of rank $0$ or $1$, the right hand side is zero.

    Applying the preceding chain decomposition to $\trunc(P)$ and multiplying by $\aaa-\bbb$, we obtain
    \[
        \exapsi_{\trunc(P)}(y,\aaa,\bbb)(\aaa-\bbb)
        =
        \sum_{\substack{w\in P\\ \rho_{w,\widehat{1}}>1}}
        \exapsi_{\widehat{0},w}(y,\aaa,\bbb)
        \bigl(-K_{\trunc([w,\widehat{1}])}(y,\aaa,\bbb)\bigr)(\aaa-\bbb)
        +
        (\aaa-\bbb)^{\rank(P)}.
    \]
    Replacing the last factor by $-K_{w,\widehat{1}}+M_{w,\widehat{1}}$, and then using the chain decomposition of $\exapsi_P$, gives
    \[
        \exapsi_{\trunc(P)}(y,\aaa,\bbb)(\aaa-\bbb)
        =
        \sum_{w\in P}
        \exapsi_{\widehat{0},w}(y,\aaa,\bbb)M_{w,\widehat{1}}(y,\aaa,\bbb),
    \]
    which is the first identity.

    For the second identity, apply $\iota$ to the first one. If $w>\widehat{0}$, then the first letter belongs to the factor $\exapsi_{\widehat{0},w}$, and hence
    \[
        \iota\bigl(\exapsi_{\widehat{0},w}M_{w,\widehat{1}}\bigr)
        =
        \expsitilde_{\widehat{0},w}M_{w,\widehat{1}}.
    \]
    The only exceptional summand is $w=\widehat{0}$, where applying $\iota$ gives $\iota(M_P)$ instead of $M_P$. Thus
    \[
        \iota\bigl((\exapsi\cdot M)_P\bigr)
        =
        (\expsitilde\cdot M)_P+\iota(M_P)-M_P.
    \]
    Since $M_P=\bbb\,\iota(M_P)$ for positive rank, the desired correction term is
    \[
        \iota(M_P)-M_P=(1-\bbb)\iota(M_P).
    \]
    This proves the second identity.
\end{proof}

\begin{remark}
    Specializing Proposition~\ref{prop:extended-ab-truncation} at $y=-x$, $\aaa=1$, and $\bbb=x$, and using Theorem~\ref{thm:stump}, recovers Larson's recursions for the ordinary Chow and left augmented Chow polynomials:
    \[
        \G_{\trunc(P)}(x)=(\G\cdot\mu^{\rev})_P(x)
    \]
    and
    \[
        \H_{\trunc(P)}(x)
        =
        (\H\cdot\mu^{\rev})_P(x)
        +(1-x)\mu_Px^{\rank(P)-1}.
    \]
    After applying the rev involution and inverting the M\"obius function, these are equivalent to the form stated in \cite[Proposition~4.12]{ferroni-matherne-vecchi}.
\end{remark}

We now specialize instead at $y=-x$, $\aaa=x$, and $\bbb=1$. This produces the dual analogue of Larson's recursion (first proved by Larson in \cite[Corollary~3.5]{larson}). Define an incidence function $\widetilde{\mu}\in\mathcal{I}(P)$ by
\[
    \widetilde{\mu}_{st}(x)
    =
    \begin{cases}
        1 & \text{if } s=t,\\
        \mu_{st}(-x)^{\rho_{st}-1} & \text{if } s<t.
    \end{cases}
\]
Since $\widetilde{\mu}_{ss}=1$ for every $s$, this element is invertible; we denote its inverse by $\widetilde{\zeta}=\widetilde{\mu}^{-1}$.

\begin{proposition}\label{prop:dual-larson-convolution}
    Let $P$ be a graded bounded poset. Then
    \[
        (\H^*\cdot\widetilde{\mu})_P(x)
        =
        \begin{cases}
            1 & \text{if } \rank(P)=0,\\
            0 & \text{if } \rank(P)=1,\\
            -\H^*_{\trunc(P)}(x) & \text{if } \rank(P)>1.
        \end{cases}
    \]
\end{proposition}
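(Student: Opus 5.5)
Specialize the second identity of Proposition~\ref{prop:extended-ab-truncation} at $y=-x$, $\aaa=x$, $\bbb=1$, and use Theorem~\ref{thm:expsi-to-dualchow} to turn each $\expsitilde$ into a dual Chow polynomial. The cases $\rank(P)=0,1$ are direct: for rank $0$ the convolution is $\H^*_{ss}\widetilde{\mu}_{ss}=1$. For rank $1$ it equals $\H^*_{st}+\widetilde{\mu}_{st}=1+\mu_{st}=0$, since $\H^*_{C_2}=1$ and $\mu_{st}=-1$. So assume $\rank(P)=r\ge 2$.

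Under this specialization, $\aaa-\bbb\mapsto x-1$ and $\bbb\mapsto 1$. For $s<t$,
\[
M_{st}\mapsto \mu_{st}x^{\rho_{st}-1}(1-x)(x-1)^{\rho_{st}-1}=-(x-1)^{\rho_{st}}\mu_{st}x^{\rho_{st}-1}.
\]
By Theorem~\ref{thm:expsi-to-dualchow}, $\expsitilde_{\widehat{0}w}(-x,x,1)=(1-x)^{\rho(w)}\H^*_{\widehat{0}w}(x)$, and also $\expsitilde_{\trunc(P)}(-x,x,1)=(1-x)^{r-1}\H^*_{\trunc(P)}(x)$. The correction term $(1-\bbb)\iota(M_P)$ vanishes because $\bbb=1$.

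The left side of the identity becomes $(1-x)^{r-1}(x-1)\H^*_{\trunc(P)}=-(1-x)^r\H^*_{\trunc(P)}$.

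On the right side, the summand $w=\widehat{1}$ gives $(1-x)^r\H^*_P$. For $w<\widehat{1}$, write $k=\rho_{w\widehat{1}}$. The summand is
\[
(1-x)^{\rho(w)}\H^*_{\widehat{0}w}\cdot\bigl(-(x-1)^{k}\mu_{w\widehat{1}}x^{k-1}\bigr)
=(1-x)^r\H^*_{\widehat{0}w}\,\mu_{w\widehat{1}}(-1)^{k+1}x^{k-1}.
\]
Since $\widetilde{\mu}_{w\widehat{1}}=\mu_{w\widehat{1}}(-1)^{k-1}x^{k-1}$ and $(-1)^{k+1}=(-1)^{k-1}$, this equals $(1-x)^r\H^*_{\widehat{0}w}\widetilde{\mu}_{w\widehat{1}}$.

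Summing over $w$, the right side is $(1-x)^r(\H^*\cdot\widetilde{\mu})_P$. Dividing both sides by $(1-x)^r$ gives $(\H^*\cdot\widetilde\mu)_P=-\H^*_{\trunc(P)}$, as claimed.

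The main obstacle is justifying the specialization step, because the $w=\widehat{0}$ summand carries $\expsitilde_{\widehat{0}\widehat{0}}=1$ by convention, and its interaction with $\iota$ in Proposition~\ref{prop:extended-ab-truncation} is exactly what produced the correction term. I would check that this term is consistent with the convention $\H^*_{ss}=1$; it is, because the correction $(1-\bbb)\iota(M_P)$ dies at $\bbb=1$, leaving $M_P=(1-x)^r\widetilde{\mu}_P$ as computed above.

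I would also double-check Theorem~\ref{thm:expsi-to-dualchow} for $\trunc(P)$, whose intervals must be graded. This holds because $P$ is graded.
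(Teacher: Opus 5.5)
Your proposal is correct and follows the paper's proof essentially line for line. You specialize the second identity of Proposition~\ref{prop:extended-ab-truncation} at $y=-x$, $\aaa=x$, $\bbb=1$, observe that the correction term $(1-\bbb)\iota(M_P)$ vanishes, convert each $\expsitilde$ via Theorem~\ref{thm:expsi-to-dualchow} using $M_{w\widehat{1}}(-x,x,1)=(1-x)^{\rho_{w\widehat{1}}}\widetilde{\mu}_{w\widehat{1}}(x)$, and divide by $(1-x)^r$. Your sign bookkeeping and your explicit check of the rank $0$ and rank $1$ cases (using $\H^*_{C_2}=1$ and $\mu_{st}=-1$) are both right.
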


\begin{proof}
    The cases $\rank(P)=0$ and $\rank(P)=1$ follow immediately from the definitions. Assume that $P$ has rank $r>1$. Specialize the second identity of Proposition~\ref{prop:extended-ab-truncation} at $y=-x$, $\aaa=x$, and $\bbb=1$. The correction term vanishes, because $1-\bbb=0$. By Theorem~\ref{thm:expsi-to-dualchow}, the left hand side becomes
    \[
        \expsitilde_{\trunc(P)}(-x,x,1)(x-1)
        =
        -(1-x)^r\H^*_{\trunc(P)}(x).
    \]
    On the right hand side, for an interval $[w,\widehat{1}]$ of positive rank $d$, we have
    \[
        M_{w,\widehat{1}}(-x,x,1)
        =
        \mu_{w,\widehat{1}}(-x)^{d-1}(1-x)^d.
    \]
    Thus the summand indexed by $w$ contributes
    \[
        (1-x)^{\rho_{\widehat{0},w}}\H^*_{\widehat{0},w}(x)\,
        \mu_{w,\widehat{1}}(-x)^{\rho_{w,\widehat{1}}-1}(1-x)^{\rho_{w,\widehat{1}}},
    \]
    and hence the entire right hand side is
    \[
        (1-x)^r(\H^*\cdot\widetilde{\mu})_P(x).
    \]
    Dividing by $(1-x)^r$ gives the stated identity.
\end{proof}

Inverting the preceding convolution identity gives a recursion for $\H^*_P$ in terms of truncations of lower intervals.

\begin{corollary}\label{cor:dual-larson-recursion}
    Let $P$ be a graded bounded poset. Then
    \[
        \H^*_P(x)
        =
        \widetilde{\zeta}_P(x)
        -
        \sum_{\substack{w\in P\\ \rho(w)>1}}
        \H^*_{\trunc([\widehat{0},w])}(x)\,
        \widetilde{\zeta}_{w,\widehat{1}}(x).
    \]
\end{corollary}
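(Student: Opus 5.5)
We derive the corollary from Proposition~\ref{prop:dual-larson-convolution} by multiplying on the right by $\widetilde{\zeta}=\widetilde{\mu}^{-1}$ in the incidence algebra.

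First we apply Proposition~\ref{prop:dual-larson-convolution} to every interval $[\widehat{0},w]$ of $P$, not only to $P$ itself. This is legitimate because each such interval is again a graded bounded poset, and its dual Chow polynomial is the entry $\H^*_{\widehat{0},w}$ of the dual Chow function. We package the result as an element $T$ of the incidence algebra, restricted to intervals with lower endpoint $\widehat{0}$:
\[
    (\H^*\cdot\widetilde{\mu})_{\widehat{0},w}(x)=T_{\widehat{0},w}(x),
\]
where $T_{\widehat{0},w}$ equals $1$ if $w=\widehat{0}$, equals $0$ if $\rho(w)=1$, and equals $-\H^*_{\trunc([\widehat{0},w])}(x)$ if $\rho(w)>1$.

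Next we multiply on the right by $\widetilde{\zeta}$. Since convolution is associative and $\widetilde{\mu}\,\widetilde{\zeta}=\delta$, we get $\H^*=(\H^*\cdot\widetilde{\mu})\cdot\widetilde{\zeta}$. The entries of $\H^*\cdot\widetilde{\mu}$ on intervals $[\widehat{0},w]$ involve only entries of $\H^*$ and $\widetilde{\mu}$ on subintervals of $[\widehat{0},w]$. Hence evaluating at $[\widehat{0},\widehat{1}]$ gives
\[
    \H^*_P(x)=\sum_{w\in P}T_{\widehat{0},w}(x)\,\widetilde{\zeta}_{w,\widehat{1}}(x).
\]

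We then split this sum by the rank of $w$. The term $w=\widehat{0}$ contributes $\widetilde{\zeta}_P(x)$. Atoms contribute nothing, since $T$ vanishes on rank-one intervals. Every $w$ with $\rho(w)>1$ contributes $-\H^*_{\trunc([\widehat{0},w])}(x)\,\widetilde{\zeta}_{w,\widehat{1}}(x)$. This is exactly the stated formula.

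The argument is essentially mechanical; the one point needing care is that the identity of Proposition~\ref{prop:dual-larson-convolution}, stated for $P$, holds uniformly for all lower intervals, so that it really is an identity of incidence functions and not just a single polynomial identity. This follows because the hypotheses (graded, bounded) are inherited by the intervals $[\widehat{0},w]$. We should also check the degenerate cases. If $\rank(P)=0$, the formula reads $\H^*_P=\widetilde{\zeta}_P=1$. If $\rank(P)=1$, it reads $\H^*_P=\widetilde{\zeta}_P=-\widetilde{\mu}_P=-\mu_P=1$, which agrees with $\H^*_{C_2}=1$.
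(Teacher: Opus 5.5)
Your proposal is correct and follows the paper's proof: the paper packages Proposition~\ref{prop:dual-larson-convolution} over all intervals as $\H^*\cdot\widetilde{\mu}=A$, multiplies on the right by $\widetilde{\zeta}=\widetilde{\mu}^{-1}$, and evaluates at $[\widehat{0},\widehat{1}]$, exactly as you do. Your restriction to the lower intervals $[\widehat{0},w]$ is an inessential variation, and your checks of the rank~$0$ and rank~$1$ cases are correct.
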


\begin{proof}
    Let $A\in\mathcal{I}(P)$ be the incidence function defined by
    \[
        A_{st}(x)
        =
        \begin{cases}
            1 & \text{if } \rho_{st}=0,\\
            0 & \text{if } \rho_{st}=1,\\
            -\H^*_{\trunc([s,t])}(x) & \text{if } \rho_{st}>1.
        \end{cases}
    \]
    Proposition~\ref{prop:dual-larson-convolution} says precisely that
    \[
        \H^*\cdot\widetilde{\mu}=A.
    \]
    Multiplying on the right by $\widetilde{\zeta}=\widetilde{\mu}^{-1}$ gives $\H^*=A\cdot\widetilde{\zeta}$. Evaluating this identity on the interval $[\widehat{0},\widehat{1}]$ gives the formula.
\end{proof}

\section{Dual Chow polynomials of matroids}\label{sec:matroids}

\subsection{Deletion formula for ab-indices of matroids}

We now prove a deletion formula for the dual Chow polynomial of a matroid. The main input is the usual description of the $\aaa\bbb$-index of a geometric lattice in terms of an $R$-labeling.

Let $\M$ be a matroid of rank $r$ on the ground set $E$. We write $\cL(\M)$ for its lattice of flats, and we abbreviate
\[
    \H^*_{\M}(x):=\H^*_{\mathcal{L}(\M)}(x).
\]
Fix a total order on $E$. For every cover $F\lessdot G$ in $\cL(\M)$, let $\lambda(F\lessdot G)$ be the minimum element of $G\setminus F$ with respect to this order. If $\cF=\{F_0\lessdot F_1\lessdot\cdots\lessdot F_r\}$ is a maximal chain in $\cL(\M)$, we write $\lambda_\cF=(\lambda_1,\ldots,\lambda_r)$, where $\lambda_j=\lambda(F_{j-1}\lessdot F_j)$. We also denote by $u_\cF(\aaa,\bbb)=u_1\cdots u_{r-1}$ the word defined by
\[
    u_j=
    \begin{cases}
        \aaa & \text{if } \lambda_j\leq \lambda_{j+1},\\
        \bbb & \text{if } \lambda_j> \lambda_{j+1},
    \end{cases}
    \qquad 1\leq j\leq r-1.
\]
That is, $u_\cF(\aaa,\bbb) = \operatorname{m}_D(\aaa,\bbb)$ for $D$ the descent set of $\lambda_\cF$, and we thus call $u_\cF(\aaa,\bbb)$ a descent word.
With this notation, the $\aaa\bbb$-index of~$\cL(\M)$ is
\[
    \Psi_{\cL(\M)}(\aaa,\bbb)=\sum_{\cF} u_\cF(\aaa,\bbb),
\]
where the sum ranges over all maximal chains of $\cL(\M)$.

Throughout the rest of the subsection, fix an element $i\in E$ which is not a coloop and that has no parallel elements (i.e., $\{i\}$ is a flat), and choose the total order above so that $i$ is maximal. We shall also assume that $\zero\cup\{i\}$ is a rank-one flat, which is automatic if $\M$ is simple.

\begin{lemma}\label{lem:chains-deletion}
    The maximal chains in $\cL(\M\setminus i)$ are in bijection with the maximal chains $\cF=\{F_0\lessdot\cdots\lessdot F_r\}$ in $\cL(\M)$ such that $F_{j+1}\setminus F_j\neq\{i\}$ for every $j$. Moreover, this bijection preserves the descent word associated to the $R$-labeling.
\end{lemma}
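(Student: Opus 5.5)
The bijection will be $\Phi$, sending a maximal chain $\cG=\{G_0\lessdot\cdots\lessdot G_r\}$ of $\cL(\M)$ avoiding covers with difference $\{i\}$ to the chain $\{G_0\setminus i\subsetneq G_1\setminus i\subsetneq\cdots\subsetneq G_r\setminus i\}$ in $\cL(\M\setminus i)$. The inverse will be closure in $\M$. Since $i$ is not a coloop, $\M\setminus i$ also has rank $r$, so maximal chains on both sides have length $r$.

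First I check that $\Phi$ is well defined. For any flat $G$ of $\M$, $G\setminus i$ is a flat of $\M\setminus i$, and $\rk_{\M\setminus i}(G\setminus i)=\rk_\M(G\setminus i)$. This equals $\rk_\M(G)$ unless $i$ is a coloop of $\M|G$, in which case it is $\rk_\M(G)-1$.

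The hypothesis $G_{j+1}\setminus G_j\neq\{i\}$ forces $G_j\setminus i\neq G_{j+1}\setminus i$. Indeed, equality would give $G_{j+1}=G_j\cup\{i\}$, since $G_j\subsetneq G_{j+1}$ and the two differ only possibly at $i$. So the images form a strictly increasing chain of $r+1$ flats in $\cL(\M\setminus i)$. As that lattice has rank $r$, the chain is saturated and maximal.

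Next I build the inverse. Given a maximal chain $\{H_0\lessdot\cdots\lessdot H_r\}$ in $\cL(\M\setminus i)$, set $G_j=\operatorname{cl}_\M(H_j)$. Each $G_j$ is either $H_j$ or $H_j\cup\{i\}$, and $G_j\setminus i=H_j$. The ranks satisfy $\rk_\M(G_j)=\rk_{\M\setminus i}(H_j)=j$, and the $G_j$ are strictly increasing, so they form a maximal chain in $\cL(\M)$. No cover has difference $\{i\}$, because $G_{j+1}\setminus i=H_{j+1}\neq H_j=G_j\setminus i$. Since $G_j\setminus i=H_j$, this is a right inverse of $\Phi$. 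For a left inverse I must show $\operatorname{cl}_\M(G_j\setminus i)=G_j$. If instead $i\in G_j$ but $i\notin\operatorname{cl}(G_j\setminus i)$, then $\rk(G_j\setminus i)=j-1$. In that case the chain $\Phi(\cG)$ would have $r+1$ distinct flats but rank jump at some step, contradicting that a strictly increasing chain of $r+1$ flats in a rank-$r$ lattice is saturated. So the map is a bijection. I expect this rank-counting to be the only delicate point.

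Finally, labels are preserved. For each $j$, $\lambda(G_j\lessdot G_{j+1})=\min(G_{j+1}\setminus G_j)$. The set $G_{j+1}\setminus G_j$ contains some element other than $i$, and $i$ is the maximum of the total order. Hence the minimum equals $\min\bigl((G_{j+1}\setminus i)\setminus(G_j\setminus i)\bigr)$, which is the label of the corresponding cover in $\cL(\M\setminus i)$ under the restricted order. The label sequences therefore coincide, so the descent sets and the descent words $u_\cF(\aaa,\bbb)$ agree.
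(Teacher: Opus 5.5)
Your proof is correct and takes essentially the same route as the paper. You delete $i$ from every flat, invert by taking closures in $\M$ (the paper describes the same inverse as adding $i$ from the first index where $C_j$ fails to be a flat of $\M$), and get label preservation from $i$ being maximal in the order. Your rank-counting check that $\operatorname{cl}_{\M}(G_j\setminus i)=G_j$ makes explicit the left-inverse step, which the paper only asserts.
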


\begin{proof}
    We use the identity $\cL(\M\setminus i)=\{F\setminus\{i\}:F\in\cL(\M)\}$. If $\cF$ is a maximal chain in $\cL(\M)$ with no cover differing by the single element $i$, then the chain obtained by replacing every $F_j$ with $F_j\setminus\{i\}$ is a maximal chain in $\cL(\M\setminus i)$, because $i$ is not a coloop.

    Conversely, let $\cC=\{C_0\lessdot\cdots\lessdot C_r\}$ be a maximal chain in $\cL(\M\setminus i)$. Let $k$ be the largest index such that $C_j$ is a flat of $\M$ for every $j\leq k$. We define a chain in $\cL(\M)$ by keeping $C_j$ unchanged for $j\leq k$ and replacing $C_j$ by $C_j\cup\{i\}$ for $j>k$. Indeed, if $C_j$ is a flat of $\M\setminus i$, then its closure in $\M$ is either $C_j$ or $C_j\cup\{i\}$; once $i$ belongs to the closure of some $C_j$, it belongs to the closure of all later $C_\ell$. This gives a maximal chain in $\cL(\M)$ with no cover differing by exactly $i$.

    The two constructions are inverse to one another. Since $i$ is maximal in the chosen total order, adding or removing $i$ simultaneously from the two flats in a cover does not change the minimum element added in that cover. Hence the descent word is preserved.
\end{proof}

Define
\[
    \mathscr{S}_i=\{F\in\cL(\M):i\notin F,\;F\cup\{i\}\in\cL(\M)\},
    \qquad
    \underline{\mathscr{S}}_i=\mathscr{S}_i\setminus\{\zero\}.
\]
Every maximal chain in $\cL(\M)$ either corresponds to a chain in $\cL(\M\setminus i)$ by Lemma~\ref{lem:chains-deletion}, or contains a unique cover of the form $F\lessdot F\cup\{i\}$ with $F\in \mathscr{S}_i$.
In the latter case, the label of this cover is $i$, which is larger than all other labels.

\begin{theorem}
\label{thm:ab-deletion}
    The $\aaa\bbb$-index of $\cL(\M)$ satisfies the following recursion:
    \begin{align*}
        \Psi_{\cL(\M)}
        &= \Psi_{\cL(\M\setminus i)} 
            + \bbb \Psi_{\cL(\M/i)} 
            + \sum_{F\in \underline{\mathscr{S}}_i } 
                \Psi_{\cL(\M|F)}\, \aaa\bbb\, \Psi_{\cL(\M/(F\cup\{i\}))} \,.
    \end{align*}
\end{theorem}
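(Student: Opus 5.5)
The plan is to compute $\Psi_{\cL(\M)}=\sum_{\cF}u_\cF(\aaa,\bbb)$ with the minimum-label $R$-labeling, taking $i$ maximal in the total order, and to split the maximal chains $\cF$ into two classes. The first class consists of chains with no cover of the form $F\lessdot F\cup\{i\}$. By Lemma~\ref{lem:chains-deletion}, these chains are in descent-word-preserving bijection with the maximal chains of $\cL(\M\setminus i)$. Since $i$ is not a coloop, $\M\setminus i$ has rank $r$, so the words have the same length, and this class contributes exactly $\Psi_{\cL(\M\setminus i)}$.

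Every remaining chain contains a unique cover $F\lessdot F\cup\{i\}$ with $F\in\mathscr{S}_i$, and I will sort these chains by $F$. Write $k=\rho(F)$. The chain then splits as a maximal chain $\cF'$ of $[\zero,F]\cong\cL(\M|F)$, followed by the cover with label $i$, followed by a maximal chain $\cF''$ of $[F\cup\{i\},E]\cong\cL(\M/(F\cup\{i\}))$. Conversely, any such triple gives a maximal chain of $\cL(\M)$. Labels of covers inside $[\zero,F]$ are the same as in $\cL(\M|F)$. For covers above $F\cup\{i\}$, the label is the minimum of $G\setminus G'$, which is exactly the minimum-label in the contraction. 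Since $\M/(F\cup\{i\})$ has ground set $E\setminus(F\cup\{i\})$ with the induced order, these labels are also an $R$-labeling there, and so their descent words match. The label at position $k+1$ is $i$, the strict maximum among all labels, because no other cover adds $i$. Therefore position $k$ (comparing $\lambda_k$ with $\lambda_{k+1}=i$) is an ascent, giving the letter $\aaa$ when $k\ge1$. Position $k+1$ (comparing $i$ with $\lambda_{k+2}$) is a strict descent, giving $\bbb$ when $k+1\le r-1$.

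The full word is therefore $u_{\cF'}\,\aaa\,\bbb\,u_{\cF''}$ when $F\neq\zero$, and $\bbb\,u_{\cF''}$ when $F=\zero$. The $F=\zero$ case uses the standing assumption that $\zero\cup\{i\}=\{i\}$ is a rank-one flat, so that $[\{i\},E]\cong\cL(\M/i)$. Summing over $\cF'$ and $\cF''$ factorizes, by the chain-sum expression for $\Psi$, into $\Psi_{\cL(\M|F)}\,\aaa\bbb\,\Psi_{\cL(\M/(F\cup\{i\}))}$ for $F\in\underline{\mathscr{S}}_i$, and into $\bbb\Psi_{\cL(\M/i)}$ for $F=\zero$.

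The main obstacle is the boundary convention when $F\cup\{i\}=E$, that is, when $F$ is a hyperplane of $\M$ not containing $i$ with $F\cup\{i\}$ closed. In that case there is no position $k+1\le r-1$ and no letter $\bbb$ should appear, yet the formula writes $\aaa\bbb\,\Psi$ of a rank-$0$ lattice. I expect to have to check that such $F$ cannot occur. Since $i$ is not a coloop, $i\in\operatorname{cl}(E\setminus i)$. If $F$ is a hyperplane with $i\notin F$, then $F\cup\{i\}$ spans, so $F\cup\{i\}$ is a flat only if it equals $E$, which would force $E\setminus i=F$ and make $i$ a coloop. So this case is excluded. I must also confirm that $F=\zero$ with $\{i\}=E$ is excluded, which holds since rank $1$ with $i$ the only element makes $i$ a coloop. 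A related check is the ranks, $\rho(M|F)=k$ and $\rank(\M/(F\cup\{i\}))=r-k-1$, so the word lengths add up to $(k-1)+2+(r-k-2)=r-1$. This is consistent and completes the count.
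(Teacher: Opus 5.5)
Your proposal is correct and follows essentially the same route as the paper: you split maximal chains by whether the label $i$ occurs, and use Lemma~\ref{lem:chains-deletion} for the chains where it does not. For the remaining chains you factor through the cover $F\lessdot F\cup\{i\}$, whose maximal label produces $\aaa\bbb$ (or only $\bbb$ when $F=\zero$), and you rule out $F\cup\{i\}=E$ by the non-coloop hypothesis, just as the paper does. Your indexing with $k=\rho(F)$ is in fact cleaner than the paper's, whose description of $\cG_1$ and $\cG_2$ has some off-by-one slips.
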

\begin{proof}
    Let $\cF$ be a maximal chain with sequence $\lambda_{\cF}=(\lambda_1,\dots, \lambda_{r})$. Assume $r\geq 2$.
    If $i$ is not contained in $\lambda_\cF$, then $\cF$ contributes to the first summand. Otherwise $\cF$ contains a cover $F\lessdot F\cup\{i\}$ and
    \[
        \lambda_\cF = ( \underbrace{\lambda_1 , \ldots, \lambda_{k-1} }_{\lambda_{\cG_1}}, \ i\ , \underbrace{\lambda_{k+1}, \ldots , \lambda_r}_{\lambda_{\cG_2}} )
    \]
    with $( \lambda_1 , \ldots, \lambda_k)$ the sequence of the maximal chain $\cG_1 =\cF \cap [\zero, F]$ in $\cL(\M\setminus i)$,
    and $(\lambda_{k+2}, \ldots , \lambda_r )$ the sequence of the maximal chain $\cG_2$ in $\cL(\M/(F\cup\{i\}))$ corresponding to $\cF$ restricted to $[F\cup\{i\},\one]$.
    Notice that $\lambda_{\cG_1}$ might be empty depending on the position $k\in \{1,\dots , r-1\}$ of $i = \lambda_k$.
    It is only possible to have $\lambda_r=i$ if $E\setminus \{i\}$ is a flat of $\M$, which is equivalent to $i$ being a coloop, and thus excluded.
    Since $i$ is the maximal possible label (with respect to our fixed total order), we obtain
    \[
        u_{\cF} = 
        \begin{cases}
            \phantom{u_{\cG_1} \, \aaa} \bbb \, u_{\cG_2} & \text{if } k = 1 \\
            u_{\cG_1} \, \aaa \bbb \, u_{\cG_2} & \text{if } 1 < k < r \,.
        \end{cases}
    \]
    These cases contribute to the second and third summand in the claimed expansion of~$\Psi_{\cL(\M)}$.
\end{proof}

\begin{corollary}\label{thm:extended-ab-deletion}
    The extended $\aaa\bbb$-index and the modified extended $\aaa\bbb$-index satisfy the following recursions:
    \begin{align*}
        \exapsi_{\cL(\M)}
        &= \exapsi_{\cL(\M\setminus i)}
         + 
            \sum_{F\in \mathscr{S}_i }  
                \exapsi_{\cL(\M|F)}\,
                (\aaa\bbb + y\bbb\aaa)\,
                \expsitilde_{\cL(\M/(F\cup\{i\}))} 
    \intertext{and}
        \expsitilde_{\cL(\M)}
        &= \expsitilde_{\cL(\M\setminus i)} 
            + (\bbb+y\aaa)\, \expsitilde_{\cL(\M/i)} \\
        & \qquad + 
            \sum_{F\in \underline{\mathscr{S}}_i}
                \expsitilde_{\cL(\M|F)}\,
                (\aaa\bbb+y\bbb\aaa)\,
                \expsitilde_{\cL(\M/(F\cup\{i\}))} \,.
    \end{align*}
\end{corollary}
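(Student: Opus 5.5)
The plan is to apply the linear map $\omega$ directly to the identity of Theorem~\ref{thm:ab-deletion}. For the first recursion I would first multiply that identity on the left by $\aaa$. For the second I would apply $\omega$ to the identity itself and then multiply by $(1+y)$. The only real input beyond Theorem~\ref{thm:ab-deletion} is a factorization property of $\omega$ on words of the shape $u\,\aaa\bbb\,v$ and $\bbb\,v$, which substitutes for multiplicativity (which $\omega$ lacks).

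\textbf{Key lemma.} Every $\aaa\bbb$-word decomposes uniquely into disjoint blocks, each equal to $\aaa\bbb$ or to a single letter not belonging to such a pair. This holds because two occurrences of $\aaa\bbb$ can never overlap. In a word $u\,\aaa\bbb\,v$ the displayed $\aaa\bbb$ is always one of these blocks. Indeed, even if $u$ ends in $\aaa$ or $v$ begins in $\bbb$, the factors $\aaa\aaa\bbb$ and $\aaa\bbb\bbb$ contain exactly one occurrence of $\aaa\bbb$, namely the displayed one. Moreover, no block of $u$ or of $v$ straddles the cut. Since $\omega$ acts blockwise, this gives
\[
\omega(u\,\aaa\bbb\,v)=\omega(u)\,(1+y)(\aaa\bbb+y\bbb\aaa)\,\omega(v).
\]
Similarly, a leading $\bbb$ is never the second letter of an $\aaa\bbb$ block, so $\omega(\bbb v)=(\bbb+y\aaa)\,\omega(v)$. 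Both identities extend linearly to $u,v\in\mathbb{Z}\langle\aaa,\bbb\rangle$.

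\textbf{First recursion.} Multiply the identity of Theorem~\ref{thm:ab-deletion} on the left by $\aaa$ and apply $\omega$. The term $\aaa\Psi_{\cL(\M\setminus i)}$ gives $\exapsi_{\cL(\M\setminus i)}$ by definition. The term $\aaa\bbb\,\Psi_{\cL(\M/i)}$ gives $(1+y)(\aaa\bbb+y\bbb\aaa)\,\omega(\Psi_{\cL(\M/i)})$, which is $(\aaa\bbb+y\bbb\aaa)\,\expsitilde_{\cL(\M/i)}$. This is exactly the summand for $F=\zero\in\mathscr{S}_i$, since $\M|\zero$ has rank $0$ and $\exapsi$ of a rank-zero interval is $1$. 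Each remaining term $\aaa\Psi_{\cL(\M|F)}\,\aaa\bbb\,\Psi_{\cL(\M/(F\cup\{i\}))}$ with $F\in\underline{\mathscr{S}}_i$ becomes, by the lemma,
\[
\omega(\aaa\Psi_{\cL(\M|F)})\,(\aaa\bbb+y\bbb\aaa)\,(1+y)\,\omega(\Psi_{\cL(\M/(F\cup\{i\}))}),
\]
which is $\exapsi_{\cL(\M|F)}\,(\aaa\bbb+y\bbb\aaa)\,\expsitilde_{\cL(\M/(F\cup\{i\}))}$. Here I use that $\cL(\M|F)=[\zero,F]$.

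\textbf{Second recursion.} Apply $(1+y)\,\omega$ to Theorem~\ref{thm:ab-deletion} itself. The term $\bbb\Psi_{\cL(\M/i)}$ gives $(\bbb+y\aaa)\,\expsitilde_{\cL(\M/i)}$ by the leading-$\bbb$ rule. Each middle term gives
\[
(1+y)\,\omega(\Psi_{\cL(\M|F)})\,(\aaa\bbb+y\bbb\aaa)\,(1+y)\,\omega(\Psi_{\cL(\M/(F\cup\{i\}))}),
\]
which is $\expsitilde_{\cL(\M|F)}\,(\aaa\bbb+y\bbb\aaa)\,\expsitilde_{\cL(\M/(F\cup\{i\}))}$.

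\textbf{Main obstacle.} The step needing the most care is the boundary conventions in low rank. These are cases where some $\Psi$ is the empty word $1$: the rank-one flats $F$, the cases where $F\cup\{i\}$ is a hyperplane, and the case where $\M/i$ has rank at most $1$. In these cases one must check that the stated values $\expsitilde_{ss}=1$ and $\exapsi_{ss}=1$, and the value $\expsitilde=(1+y)$ on rank-one intervals, agree with what $\omega$ produces. For example, when $\Psi_{\cL(\M/(F\cup\{i\}))}=1$ on a rank-one interval, the factor is $(1+y)\,\omega(1)=1+y$, which matches. Alternatively, these edge cases can be checked against the Poincaré-chain description of Theorem~\ref{thm:expsi-via-poincare}.
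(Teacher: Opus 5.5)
Your proposal is correct and is essentially the paper's argument: prepend $\aaa$ to Theorem~\ref{thm:ab-deletion} and apply $\omega$, using $\omega(\aaa\bbb)=(1+y)(\aaa\bbb+y\bbb\aaa)$, and absorb the contraction term as the $F=\zero$ summand via $\exapsi_{\cL(\M|\zero)}=1$. The differences are minor. You make explicit the blockwise factorization of $\omega$, which the paper uses tacitly. You also obtain the second recursion by applying $(1+y)\,\omega$ directly to Theorem~\ref{thm:ab-deletion}, whereas the paper applies $\iota$ to the first recursion; the two are equivalent via $\iota\circ\omega=(1+y)\,\omega\circ\iota$.
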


\begin{proof}
    Since $\exapsi_{\cL(\M)} = \omega(\aaa\Psi_{\cL(\M)})$, we prepend $\aaa$ to the identity in \Cref{thm:ab-deletion} and then apply $\omega$.
    Using $\omega(\aaa\bbb) = (1+y)(\aaa\bbb + y\bbb\aaa)$ and $(1+y)\omega(\Psi) = \expsitilde$, we prove the first identity
    \begin{align*}
        \exapsi_{\cL(\M)}
        &= \exapsi_{\cL(\M\setminus i)}
         + (\aaa\bbb+y\bbb\aaa) \expsitilde_{\cL(\M/i)} \\
        & \quad  + \sum_{F\in \underline{\mathscr{S}}_i } 
        \exapsi_{\cL(\M|F)}\,
        (\aaa\bbb + y\bbb\aaa)\,
        \expsitilde_{\cL(\M/(F\cup\{i\}))} \,.
    \end{align*}
    For the second identity, we apply $\iota$ to the first and obtain
    \begin{align*}
        \expsitilde_{\cL(\M)}
        &= \expsitilde_{\cL(\M\setminus i)} 
         + (\bbb+y\aaa) \expsitilde_{\cL(\M/i)} \\
        & \quad  + \sum_{F\in \underline{\mathscr{S}}_i } 
        \expsitilde_{\cL(\M|F)}\,
        (\aaa\bbb + y\bbb\aaa)\,
        \expsitilde_{\cL(\M/(F\cup\{i\}))} \,.\qedhere
    \end{align*}
\end{proof}

\begin{remark}
    Specializing the second identity in Theorem~\ref{thm:extended-ab-deletion} at $y=-x$, $\aaa=1$, and $\bbb=x$, and using Stump's formulas stated in Theorem~\ref{thm:stump}, one recovers the usual semi-small decomposition for the Chow polynomial (see \cite[Corollary~3.24]{ferroni-matherne-stevens-vecchi}).
\end{remark}

Reasoning in an entirely analogous way, one obtains the following counterpart of the above result.

Define $\exapsib_{st} := \omega(\aaa\Psi_{st}\bbb)$ for $s<t$ and $\exapsib_{ss}=1$, according to our notation \eqref{eq:ex-and-tilde}.

\begin{corollary}\label{thm:modified-extended-ab-deletion}
    The extensions $\exapsib_{\cL(\M)}$ and $\expsib_{\cL(\M)}$ satisfy the following recursions:
    \begin{align*}
        \exapsib_{\cL(\M)}
        &= \exapsib_{\cL(\M\setminus i)} 
            + \sum_{F\in \mathscr{S}_i } 
        (1+y) \exapsi_{\cL(\M|F)}\,
        (\aaa\bbb + y\bbb\aaa)\,
        \expsib_{\cL(\M/(F\cup\{i\}))} 
    \intertext{and}
        \expsib_{\cL(\M)}
        &= \expsib_{\cL(\M\setminus i)} 
            + (\bbb+y\aaa) \expsib_{\cL(\M/i)} \\
        & \qquad  + \sum_{F\in \underline{\mathscr{S}}_i } 
        \expsitilde_{\cL(\M|F)}\,
        (\aaa\bbb+y\bbb\aaa)\,
        \expsib_{\cL(\M/(F\cup\{i\}))}\,.
    \end{align*}
\end{corollary}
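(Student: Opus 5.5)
Just as for the previous corollary, both recursions follow from Theorem~\ref{thm:ab-deletion} by multiplying by suitable boundary letters and then applying $\omega$ (and, for the second, $\iota$). The first step is to multiply the identity of Theorem~\ref{thm:ab-deletion} by $\aaa$ on the left and by $\bbb$ on the right. This gives
\[
    \aaa\Psi_{\cL(\M)}\bbb
    =\aaa\Psi_{\cL(\M\setminus i)}\bbb+\aaa\bbb\,\Psi_{\cL(\M/i)}\bbb
    +\sum_{F\in\underline{\mathscr{S}}_i}\aaa\Psi_{\cL(\M|F)}\,\aaa\bbb\,\Psi_{\cL(\M/(F\cup\{i\}))}\bbb .
\]
We then apply $\omega$ term by term. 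The key structural fact is that $\omega$ acts on a word by a factorization into blocks: each occurrence of $\aaa\bbb$ is one block, and every other letter is its own block. In each correction term, the distinguished $\aaa\bbb$ is a genuine occurrence of $\aaa\bbb$ in the word. It is never merged with neighbours, because the preceding factor ends with an arbitrary letter and the following factor begins with one, and no block is longer than two letters. Therefore $\omega(u\,\aaa\bbb\,v)=\omega(u)\,\omega(\aaa\bbb)\,\omega(v)$ holds whenever $u$ and $v$ are words.

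Applying this multiplicativity, the correction term for $F\in\underline{\mathscr{S}}_i$ becomes
\[
    \omega(\aaa\Psi_{\cL(\M|F)})\,(1+y)(\aaa\bbb+y\bbb\aaa)\,\omega(\Psi_{\cL(\M/(F\cup\{i\}))}\bbb)
    =(1+y)\exapsi_{\cL(\M|F)}(\aaa\bbb+y\bbb\aaa)\expsib_{\cL(\M/(F\cup\{i\}))}.
\]
The $\bbb\Psi_{\cL(\M/i)}$ term gives $(1+y)(\aaa\bbb+y\bbb\aaa)\expsib_{\cL(\M/i)}$. This is exactly the summand for $F=\zero\in\mathscr{S}_i$, since $\exapsi_{\zero\zero}=1$ and $\M/(\zero\cup\{i\})=\M/i$. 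Collecting everything yields the first identity.

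For the second identity, we apply $\iota$, which deletes the leftmost letter. By \eqref{eq:iota-omega}, we have $\iota\circ\omega=(1+y)\,\omega\circ\iota$, and hence $\iota(\exapsib)=(1+y)\expsib$. Applying $\iota$ to a product deletes the first letter of the leftmost factor. For the $F=\zero$ term this gives $\iota\bigl((\aaa\bbb+y\bbb\aaa)X\bigr)=(\bbb+y\aaa)X$. For $F\neq\zero$ it gives $\iota(\exapsi_{\cL(\M|F)})=\expsitilde_{\cL(\M|F)}$, which is already a multiple of $(1+y)$. Dividing the whole identity by $(1+y)$, which is legitimate in the torsion-free ring $\mathbb{Z}[y]\langle\aaa,\bbb\rangle$, produces the stated second recursion.

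The only delicate point is the multiplicativity of $\omega$ across the concatenation, together with the degenerate cases. The degenerate cases are when $\M|F$ has rank~$1$, so that $\Psi_{\cL(\M|F)}=1$, and when $\M/(F\cup\{i\})$ has rank $1$. In the latter case $\Psi=1$ and the trailing $\bbb$ sits immediately after the distinguished $\aaa\bbb$. I would check directly that the block decomposition still isolates the distinguished $\aaa\bbb$ in these edge cases, and that the rank-$1$ conventions agree ($\exapsi=\omega(\aaa)$ and $\expsib=\omega(\bbb)$). I would also verify that the case where $\M$ itself has rank $1$ or $2$ is consistent with the conventions set in the definitions.
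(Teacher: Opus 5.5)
Your argument is correct and follows the paper's route. The paper obtains this corollary ``in an entirely analogous way'' to Corollary~\ref{thm:extended-ab-deletion}: it brackets the identity of Theorem~\ref{thm:ab-deletion} as $\aaa(\cdot)\bbb$, applies $\omega$ term by term, then applies $\iota$ and cancels the common factor $1+y$.

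One justification should be stated more sharply: the multiplicativity $\omega(u\,\aaa\bbb\,v)=\omega(u)\,\omega(\aaa\bbb)\,\omega(v)$. It holds because the distinguished block begins with $\aaa$ and ends with $\bbb$, so no occurrence of $\aaa\bbb$ can straddle either cut. Your stated reason, that blocks have length at most two, does not establish this; the same reasoning would wrongly apply to a distinguished $\bbb\aaa$.
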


\subsection{The deletion formula for dual Chow polynomials of matroids}

We specialize $\expsitilde$ and $\expsib$ at $y=-x$, $\aaa=x$, and $\bbb=1$. By Theorem~\ref{thm:expsi-to-dualchow}, this gives the corresponding decomposition for the dual Chow polynomial. 

\begin{theorem}\label{thm:dual-chow-deletion}
    Let $\M$ be a matroid of rank $r$, and let $i\in E$ be an element that is not a coloop and has no parallel elements. Then the dual Chow polynomial satisfies:
    \[
        \H^*_{\M}(x)
        =
        \H^*_{\M\setminus i}(x)
        +(x+1)\H^*_{\M/i}(x)
        +x\sum_{F\in\underline{\mathscr{S}}_i}
        \H^*_{\M|F}(x)\,
        \H^*_{\M/(F\cup\{i\})}(x).
    \]
    Similarly, the right augmented dual Chow polynomial satisfies:
    \[
    \F^*_{\M}(x)= \F^*_{\M \setminus i}(x) + (x+1)\F^*_{\M /i}(x) +x\sum_{F \in \underline{\mathscr{S}}_i} \H^*_{\M|F}(x)\F^*_{\M / (F\cup \{i\})}(x).
\]
\end{theorem}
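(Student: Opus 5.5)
The plan is to specialize the second identity of Corollary~\ref{thm:extended-ab-deletion} at $y=-x$, $\aaa=x$, $\bbb=1$, and then convert each term using Theorem~\ref{thm:expsi-to-dualchow}. Doing the same with Corollary~\ref{thm:modified-extended-ab-deletion} will give the augmented statement. This requires that every lattice of flats involved is graded with respect to its rank function. That holds, since $\M\setminus i$ (with $i$ not a coloop), $\M/i$, $\M|F$ and $\M/(F\cup\{i\})$ are all matroids whose lattices of flats are geometric.

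We first record the ranks. Since $i$ is not a coloop, $\M\setminus i$ has rank $r$. Since $\{i\}$ is a rank-one flat, $\M/i$ has rank $r-1$. For $F\in\underline{\mathscr{S}}_i$ with $\rk F=k$, the flat $F\cup\{i\}$ covers $F$, so $\M|F$ has rank $k$ and $\M/(F\cup\{i\})$ has rank $r-k-1$.

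Next we evaluate the connecting words at the specialization:
\[
\bbb+y\aaa\mapsto 1-x^2=(1-x)(1+x),\qquad \aaa\bbb+y\bbb\aaa\mapsto x-x^2=x(1-x).
\]
By Theorem~\ref{thm:expsi-to-dualchow}, each $\expsitilde_{\cL(\N)}(-x,x,1)$ equals $(1-x)^{\rk \N}\H^*_{\N}(x)$. Substituting, the left side is $(1-x)^r\H^*_\M$ and the deletion term is $(1-x)^r\H^*_{\M\setminus i}$. The contraction term becomes
\[
(1-x)(1+x)(1-x)^{r-1}\H^*_{\M/i}=(1-x)^r(1+x)\H^*_{\M/i}.
\]
Each summand indexed by $F$ becomes
\[
(1-x)^k\H^*_{\M|F}\cdot x(1-x)\cdot(1-x)^{r-k-1}\H^*_{\M/(F\cup\{i\})},
\]
which again carries the total factor $(1-x)^r$. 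Dividing through by $(1-x)^r$ in $\Z[x]$ gives the first formula.

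The augmented formula goes the same way. The identity $(1-x)^{\rho}\F^*=\expsib(-x,x,1)$ from Theorem~\ref{thm:expsi-to-dualchow} applies to the $\expsib$ factors, and the $\expsitilde_{\cL(\M|F)}$ factor again yields $\H^*_{\M|F}$. The same exponent count gives $(1-x)^r$ in every term.

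Since Corollary~\ref{thm:modified-extended-ab-deletion} is only asserted ``analogously'', we will double-check its second identity. Appending $\bbb$ on the right of Theorem~\ref{thm:ab-deletion} gives terms of the form $\Psi_{\cL(\M|F)}\,\aaa\bbb\,\Psi_{\cL(\M/(F\cup\{i\}))}\bbb$. We then apply $\omega$, splitting $\omega$ at the letter pair $\aaa\bbb$ contributed by $i$. This is legitimate because $\omega$ acts locally on factorizations at an $\aaa\bbb$ occurrence: the pair $\aaa\bbb$ is replaced as a block, so the words on either side are transformed independently. The left factor produces $\omega(\Psi_{\cL(\M|F)})$, the pair produces $(1+y)(\aaa\bbb+y\bbb\aaa)$, and the right factor produces $\expsib$. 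Together with $(1+y)\omega(\Psi)=\expsitilde$, this gives the displayed form.

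The main obstacle is the degenerate edge cases. Corollary~\ref{thm:dual-chow-deletion}'s formula needs $r\ge2$, matching the assumption in Theorem~\ref{thm:ab-deletion}. The case $r=1$ is impossible: $i$ would then be a non-coloop rank-one flat with no parallel elements, so $\{i\}$ is a flat and $i$ lies in the only atom, forcing $\M$ to be trivial apart from loops. We will check this by hand, using the convention $\expsitilde_{ss}=1$ with $\H^*$ of a rank-zero lattice equal to $1$, so that a contraction of rank $0$ contributes correctly. We will also treat loops via the assumption in the preceding subsection that $\zero\cup\{i\}$ is a rank-one flat, which is implicit here.
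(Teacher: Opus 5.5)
Your proposal is correct and follows the paper's proof. You specialize the second identities of Corollaries~\ref{thm:extended-ab-deletion} and~\ref{thm:modified-extended-ab-deletion} at $y=-x$, $\aaa=x$, $\bbb=1$, convert each factor via Theorem~\ref{thm:expsi-to-dualchow}, and divide by the common factor $(1-x)^r$. Your extra check that $\omega$ factors across the $\aaa\bbb$ block contributed by $i$ correctly justifies the corollary that the paper asserts only ``analogously''; and since a non-loop, non-coloop $i$ with no parallels forces $r\geq 2$ and every minor in the formula to have positive rank, the rank-$0$ conventions you mention are never actually needed.
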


\begin{proof}
    Specialize the second identity of Theorem~\ref{thm:extended-ab-deletion} at $y=-x$, $\aaa=x$, and $\bbb=1$. For every minor $\N$ of $\M$, Theorem~\ref{thm:expsi-to-dualchow} gives $\expsitilde_{\cL(\N)}(-x,x,1)=(1-x)^{\rk(\N)}\H^*_{\N}(x)$. Since $i$ is not a coloop, $\rk(\M\setminus i)=r$ and $\rk(\M/i)=r-1$. Also, for $F\in\underline{\mathscr{S}}_i$, one has $\rk(\M|F)+\rk(\M/(F\cup\{i\}))=r-1$.

    Finally, under the same specialization, $\bbb+y\aaa$ becomes $1-x^2=(x+1)(1-x)$, whereas $\aaa\bbb+y\bbb\aaa$ becomes $x-x^2=x(1-x)$. Thus every term contains the common factor $(1-x)^r$, and after dividing by it we obtain the stated formula for $\H^*_{\M}(x)$.

    The proof for $\F^*_{\M}(x)$ is identical, using Theorem~\ref{thm:modified-extended-ab-deletion} instead of Theorem~\ref{thm:extended-ab-deletion}.
\end{proof}

\begin{remark}
    The preceding theorem also shows, via a straightforward induction, that $\H^*_{\M}(x)$ is $\gamma$-positive for every matroid $\M$. The reason is that all the summands on the right hand side are symmetric polynomials with the same center of symmetry, and one can inductively assume that they are $\gamma$-positive. 
\end{remark}

\subsection{Dual Chow polynomials of uniform matroids}

For each integer $n\geq 1$, the $n$-th Eulerian polynomial $A_n(x)$ and the $n$-th binomial Eulerian polynomial $\widetilde{A}_n(x)$ are defined by 
\[
    A_n(x) = \sum_{w\in \mathfrak{S}_n} x^{\des(w)}\,,
    \quad \text{and} \quad
    \widetilde{A}_n(x) = 1 + x\, \sum_{k=1}^n \binom{n}{k} A_k(x) \,.
\]
We further define $A_0(x) = \widetilde{A}_0(x) = 1$. For $\kappa = \chi$, the Chow polynomial and the augmented Chow polynomial of the Boolean matroid $\U_{r,r}$ are given by
\[
    \H_{\U_{r,r}}(x) = A_r(x)
    \quad \text{and} \quad
    \G_{\U_{r,r}}(x) = \widetilde{A}_r(x) \,,
\]
see, e.g., \cite{hameister-rao-simpson}.

\begin{proposition}
    For the Boolean matroid, the dual Chow polynomial equals the Chow polynomial. That is, 
    \(
        \H^*_{\U_{r,r}}(x) = A_r(x)
        \text{ and }
        \F^*_{\U_{r,r}}(x) = \widetilde{A}_r(x) \,.
    \)
\end{proposition}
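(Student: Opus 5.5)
The plan is to use the $\gamma$-expansions of Theorem~\ref{thm:gamma exp} and compare them with the corresponding expansions for the ordinary Chow polynomials. The Boolean lattice $B_r=\cL(\U_{r,r})$ is Eulerian, so its flag $h$-vector is symmetric: $\beta_{B_r}(S)=\beta_{B_r}(S^c)$. Indeed, $\beta_{B_r}(S)$ counts permutations of $[r]$ with descent set $S$, and complementation of values, $w\mapsto r+1-w$, exchanges $S$ and $S^c$. By the remark after Theorem~\ref{thm:gamma exp}, this symmetry already gives $\F^*_{B_r}(x)=\G_{B_r}(x)=\widetilde{A}_r(x)$.

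For $\H^*$ we work directly with Theorem~\ref{thm:gamma exp}. Using the symmetry of $\beta$, it gives
\[
\H^*_{B_r}(x)=\sum_{\substack{S\subseteq[r-2]\\ S\text{ stable}}}\beta_{B_r}(S)\,x^{|S|}(1+x)^{r-1-2|S|}.
\]
We then compare this with Stump's specialization in Theorem~\ref{thm:stump}, $\expsitilde_P(-x,1,x)=(1-x)^r\H_P(x)$. Under $\aaa=1$, $\bbb=x$, $y=-x$, the roles of the letters are exchanged relative to the proof of Theorem~\ref{thm:gamma exp}:
\begin{itemize}
\item each $\aaa\bbb$ becomes $x(1-x)^2$;
\item each remaining $\aaa$ becomes $(1-x)(1+x)$;
\item each remaining $\bbb$ becomes $0$.
\end{itemize}
So for an arbitrary poset,
\[
\H_P(x)=\sum_{S}\beta_P(S)\,x^{|S|}(1+x)^{r-1-2|S|},
\]
where the sum runs over stable $S\subseteq[r-1]$ with $1\notin S$; the condition $1\notin S$ holds because each $\bbb$ must be preceded by an $\aaa$. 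This is exactly the $\H^*$ expansion reflected by $j\mapsto r-j$ on $[r-1]$. Hence $\H^*_P=\H_{P'}$ whenever $\beta_{P'}(S)=\beta_P(S^c)$ after this reflection.

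For the Boolean lattice, reversing a permutation, $w\mapsto w_r\cdots w_1$, sends descent set $S$ to $\{r-j: j\notin S\}$. Composing with complementation shows $\beta_{B_r}$ is invariant under $S\mapsto r-S$ as well as under $S\mapsto S^c$. The two expansions therefore agree term by term, giving $\H^*_{B_r}=\H_{B_r}=A_r(x)$.

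The main obstacle is the bookkeeping in the $\omega$-specialization: checking which boundary condition ($1\notin S$ versus $r-1\notin S$) arises in each case, and getting the exponent of $(1+x)$ right. As a fallback, I would verify the identity via the deletion formula of Theorem~\ref{thm:dual-chow-deletion}, but $U_{r,r}$ has only coloops, so that formula does not apply, and I would instead use the join/augmentation formulas together with the known Eulerian recursion.
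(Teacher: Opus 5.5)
Your proposal is correct and follows essentially the paper's argument: it compares the $\gamma$-expansions of Theorem~\ref{thm:gamma exp} with those of the ordinary and augmented Chow polynomials, using $\beta(S)=\beta(S^c)$ for the Boolean lattice together with invariance under $S\mapsto r-S$. That second invariance is exactly the self-duality of $\cL(\U_{r,r})$ the paper invokes, and for $\F^*$ you reduce to $\G_{\U_{r,r}}=\widetilde{A}_r(x)$ just as the paper does. The only difference is that you derive the ordinary $\gamma$-expansion (stable $S$ with $1\notin S$) directly from Stump's specialization rather than citing it. Your bookkeeping there is right, so the fallback via join and augmentation formulas is unnecessary.
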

\begin{proof}
    The flag $h$-vector of $\U_{r,r}$ satisfies $\beta_{\cL(\U_{r,r})}(S) = \beta_{\cL(\U_{r,r})}(S^c)$ for all $S\subseteq [r-1]$.
    In addition, $\cL(\U_{r,r})$ is self-dual. 
    Thus, the $\gamma$-expansions stated in \Cref{thm:gamma exp} for $\H^*_{\U_{r,r}}$ and $\F^*_{\U_{r,r}}$ coincide with the $\gamma$-expansions for $\H$ and $\F$; see e.g. \cite[Theorem 4.25]{ferroni-matherne-vecchi}.
\end{proof}

Using the numerous formulas for dual Chow polynomials that we have presented here, we also give explicit formulas for all uniform matroids $\U_{r,n}$ of rank $r$ on ground set $[n]$.
    
\begin{proposition}
    The dual Chow polynomial of the uniform matroid $\U_{r,n}$ is
    \begin{align*}
        \H^*_{\U_{r,n}}(x)
        &=
        \binom{n-1}{r-1}
        +
        \sum_{j=0}^{r-2}
        \binom{n}{j}
        \binom{n-j-1}{r-j-1}
        A_j(x)\,
        (x+ \dots + x^{r-j-1}) \,.
    \end{align*}
    The dual augmented Chow polynomial of the uniform matroid $\U_{r,n}$ is
    \begin{align*}
        \F^*_{\U_{r,n}}(x)
        &=
        \binom{n-1}{r-1}
        +
        \sum_{j=0}^{r-1}
        \binom{n}{j}
        \binom{n-j-1}{r-j-1}
        A_j(x)\,
        (x+ \dots + x^{r-j}) \,.
    \end{align*}
\end{proposition}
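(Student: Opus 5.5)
The plan is to evaluate the chain formula of Theorem~\ref{thm:iterative} directly. The point is that every proper flat of $\U_{r,n}$ spans a Boolean lower interval, and every upper interval is again uniform. The augmented statement should then follow from Proposition~\ref{prop:Hstar-from-Fstar}.

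\emph{Regrouping the chain formula.} In the formula of Theorem~\ref{thm:iterative} for $P=\cL(\U_{r,n})$, I separate the chain with $c_0=\widehat{1}$ (that is, $m=0$) from the rest. For the remaining chains I group according to $w=c_{m-1}<\widehat{1}$. The sum over all chains $\widehat{0}\le c_0<\dots<c_{m-1}=w$ of $\mu_{\widehat{0}c_0}\prod_{i=1}^{m-1}\mu_{c_{i-1}c_i}\frac{x^{\rho_{c_{i-1}c_i}}-x}{x-1}$ is, by the interval version of Theorem~\ref{thm:iterative}, exactly $(-1)^{\rho(w)}\H^*_{\widehat{0}w}(x)$. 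This includes $w=\widehat{0}$, where it equals $1$. Writing $\frac{x^d-x}{x-1}=x+\dots+x^{d-1}$, this gives
\[
\H^*_P(x)=(-1)^r\Bigl(\mu_{\widehat{0}\widehat{1}}+\sum_{w<\widehat{1}}(-1)^{\rho(w)}\H^*_{\widehat{0}w}(x)\,\mu_{w\widehat{1}}\,(x+\dots+x^{\rho_{w\widehat{1}}-1})\Bigr),
\]
valid for any bounded poset.

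\emph{Uniform data.} For $\U_{r,n}$ the proper flats are the subsets of size $j\le r-1$, and there are $\binom{n}{j}$ of them. Each lower interval $[\widehat{0},w]$ with $|w|=j$ is Boolean, so $\H^*_{\widehat{0}w}=A_j(x)$ by the preceding proposition, with the conventions $A_0=A_1=1$ matching $\H^*$ in ranks $0,1$. The upper interval $[w,\widehat{1}]$ is $\cL(\U_{r-j,n-j})$. I will use the standard value $\mu_{\U_{k,m}}=(-1)^k\binom{m-1}{k-1}$ for uniform matroids of positive rank, which I check quickly from $\chi_{\U_{k,m}}(1)=0$ and a binomial identity. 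Then all signs cancel: $(-1)^r(-1)^j(-1)^{r-j}=1$. The constant term comes out as $\binom{n-1}{r-1}$. The terms with $j=r-1$ vanish because the factor $x+\dots+x^{0}$ is an empty sum, so only $0\le j\le r-2$ survive. This is exactly the claimed formula for $\H^*_{\U_{r,n}}(x)$.

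\emph{Augmented formula.} I use $\F^*_{st}=\sum_{s\le w\le t}\H^*_{sw}(-x)^{\rho_{wt}}\mu_{wt}$ from Proposition~\ref{prop:Hstar-from-Fstar}. The $w=\widehat{1}$ term contributes $\H^*_P$. A flat $w$ of size $j\le r-1$ contributes $\binom{n}{j}A_j(x)\,x^{r-j}\binom{n-j-1}{r-j-1}$, since again the signs cancel. Adding these to the expression for $\H^*_P$ extends each factor $x+\dots+x^{r-j-1}$ to $x+\dots+x^{r-j}$. It also brings in the new $j=r-1$ term, which raises the upper summation limit to $r-1$.

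The only step needing care is the sign bookkeeping in the regrouping, namely identifying the partial chain sums with $(-1)^{\rho(w)}\H^*_{\widehat{0}w}$ including the degenerate $w=\widehat{0}$ case, together with the uniform Möbius value. Both are routine.
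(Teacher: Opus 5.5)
Your proposal is correct and follows essentially the same route as the paper's proof. You regroup the chain formula of Theorem~\ref{thm:iterative} by the penultimate flat, use that lower intervals are Boolean (so $\H^*_{\zero F}=A_j(x)$) together with $\mu_{F\one}=(-1)^{r-j}\binom{n-j-1}{r-j-1}$, and obtain $\F^*_{\U_{r,n}}$ from the first identity of Proposition~\ref{prop:Hstar-from-Fstar}, which is exactly what the paper does, while you additionally spell out the $w=\zero$ case and the derivation of the uniform M\"obius value that the paper leaves implicit.
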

\begin{proof}
    We group the chain formula of Theorem~\ref{thm:iterative} according to the
    penultimate flat of the chain. If $F<\one$ has rank $j$, then a chain from
    $\zero$ to $F$ contributes $(-1)^j\H^*_{\zero F}(x)$ to the unsigned part
    of the formula in Theorem~\ref{thm:iterative}. Hence
    \[
        \H^*_{\U_{r,n}}(x)
        =
        (-1)^r\mu_{\zero\one}
        +
        \sum_{F<\one}
        (-1)^{r+\rk(F)}
        \H^*_{\zero F}(x)\,
        \mu_{F\one}\,
        \frac{x^{\rho_{F\one}}-x}{x-1}.
    \]
    Recall that $\cL(\U_{r,n})$ consists of the set $[n]$ and all subsets of
    $[n]$ with cardinality at most $r-1$, ordered by inclusion. If $F$ has rank
    $j<r$, then $[\zero,F]$ is Boolean of rank $j$, and therefore
    $\H^*_{\zero F}(x)=A_j(x)$. Moreover,
    \[
        \mu_{F\one}=(-1)^{r-j}\binom{n-j-1}{r-j-1}.
    \]
    There are $\binom{n}{j}$ flats of rank $j$, and the top flat contributes
    $(-1)^r\mu_{\zero\one}=\binom{n-1}{r-1}$. Since the signs cancel for each
    $F<\one$, we obtain
    \[
        \H^*_{\U_{r,n}}(x)
        =
        \binom{n-1}{r-1}
        +
        \sum_{j=0}^{r-1}
        \binom{n}{j}\binom{n-j-1}{r-j-1}
        A_j(x)\frac{x^{r-j}-x}{x-1}.
    \]
    The summand with $j=r-1$ vanishes, and this proves the formula for
    $\H^*_{\U_{r,n}}(x)$.

    For $\F^*_{\U_{r,n}}(x)$ we use Proposition~\ref{prop:Hstar-from-Fstar}:
    \[
        \F^*_{\U_{r,n}}(x)
        =
        \sum_{F\in\cL(\U_{r,n})}
        \H^*_{\zero F}(x)(-x)^{\rho_{F\one}}\mu_{F\one}.
    \]
    The term $F=\one$ is $\H^*_{\U_{r,n}}(x)$. For flats of rank $j<r$, the
    remaining contribution is
    \[
        \binom{n}{j}
        \binom{n-j-1}{r-j-1}
        A_j(x)x^{r-j}.
    \]
    Adding this to the formula already obtained for $\H^*_{\U_{r,n}}(x)$
    replaces, for each $j<r$, the factor $x+\cdots+x^{r-j-1}$ by
    $x+\cdots+x^{r-j}$. This gives the claimed formula for
    $\F^*_{\U_{r,n}}(x)$.
\end{proof}

By \Cref{thm:gamma exp},
the $\gamma$-expansion of the dual Chow polynomial and the dual augmented Chow polynomial is determined by the flag $h$-vector of $\cL(\U_{r,n})$.
Following the definitions and standard interpretations of the flag $h$-vector in \cite{stanley-ec1}, we determine the flag $h$-vector of $\cL(\U_{r,n})$.

For a permutation $\sigma\in \mathfrak{S}_n$ let 
\[
    \Des(\sigma) := \{i\in [n-1]\mid \sigma(i) > \sigma(i+1) \}
\]
denote the set of descents. Similarly, let $\Asc(\sigma)$ denote the set of ascents.
For $D\subseteq [n-1]$, the Eulerian number $\mathrm{E}(n,D)$ is the number of permutations in $\mathfrak{S}_n$ having descent set $D$, that is,
\(
    \mathrm{E}(n,D) = \# \{ \sigma \in \mathfrak{S}_n \mid \Des(\sigma)=D \} \,.
\)

\begin{proposition}\label{prop:gamma-uniform}
    The $\gamma$-polynomial of the dual Chow polynomial of $\U_{r,n}$ is
    \[
        \gamma(\H^*_{\U_{r,n}};x)
        =
        \sum_{k=0}^{r-1}
        \binom{n-1-k}{r-1-k}
        p_{r-1,k}^{[r-1]\setminus\{1\}}(x).
    \]
    The $\gamma$-polynomial of the dual augmented Chow polynomial is
    \[
        \gamma(\F^*_{\U_{r,n}};x)
        =
        \sum_{k=0}^{r-1}
        \binom{n-1-k}{r-1-k}
        p_{r-1,k}^{[r-1]}(x).
    \]
\end{proposition}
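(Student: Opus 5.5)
The plan is to compute the flag $h$-vector of $\cL(\U_{r,n})$ explicitly and then substitute it into the $\gamma$-expansions of Theorem~\ref{thm:gamma exp}. For $\H^*$, the coefficient of $x^j$ in $\gamma(\H^*_{\U_{r,n}};x)$ is $\sum\beta_{\cL(\U_{r,n})}(S^c)$ over stable $S\subseteq[r-2]$ with $|S|=j$. For $\F^*$, the same sum runs over stable $S\subseteq[r-1]$. So everything reduces to a formula for $\beta_{\cL(\U_{r,n})}$ as a sum over $k$ weighted by $\binom{n-1-k}{r-1-k}$. The polynomials $p_{r-1,k}^{S}(x)$ should then be exactly the generating functions of the resulting permutation statistics over stable sets.

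To get $\beta$, I would use the $R$-labeling of \Cref{sec:matroids}, with the natural order on $[n]$ and $\lambda(F\lessdot G)=\min(G\setminus F)$. Since every subset of size at most $r-1$ is a flat, a maximal chain $\zero\lessdot F_1\lessdot\cdots\lessdot F_{r-1}\lessdot\one$ is the same as an injective word $a_1\cdots a_{r-1}$ in $[n]$. Its label sequence is $(a_1,\dots,a_{r-1},m)$, where $m=\min\bigl([n]\setminus\{a_1,\dots,a_{r-1}\}\bigr)$.

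I then split according to $k=m-1$. By minimality of $m$, the values $1,\dots,k$ all appear among the $a_j$, and the other $r-1-k$ letters are chosen from $\{m+1,\dots,n\}$, giving $\binom{n-1-k}{r-1-k}$ choices. Standardizing the word $a_1\cdots a_{r-1}$ yields a permutation $w\in\mathfrak{S}_{r-1}$, and $m$ sits strictly between the values $k$ and $k+1$ of $w$. Descents at positions $1,\dots,r-2$ are the descents of $w$. Position $r-1$ is a descent exactly when $w(r-1)>k$. Hence
\[
\beta_{\cL(\U_{r,n})}(D)=\sum_{k=0}^{r-1}\binom{n-1-k}{r-1-k}\,\#\{w\in\mathfrak{S}_{r-1}:\ \Des(w\,|\,k+\tfrac12)=D\},
\]
where $w\,|\,k+\tfrac12$ denotes the word $w$ with the value $k+\tfrac12$ appended at the end.

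Plugging this in and swapping the sums over $k$ and $S$ gives
\[
\gamma(\H^*_{\U_{r,n}};x)=\sum_k\binom{n-1-k}{r-1-k}\sum_{S}x^{|S|}\,\#\{w:\Des(w\,|\,k+\tfrac12)=S^c\}.
\]
This means that $S$ is precisely the set of weak ascents of the appended word, restricted to be stable. Writing the inner sum as a sum over permutations, $x^{|\Asc|}$ counts only those permutations whose ascent set is stable and avoids the last position (for $\H^*$), or is merely stable (for $\F^*$).

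The main obstacle is matching this with the precise definition of $p_{r-1,k}^{S}(x)$. Its superscript $[r-1]\setminus\{1\}$ in the $\H^*$ case suggests the allowed ascent positions are indexed from the other end. I would therefore, if needed, apply the reversal-complement symmetry $w\mapsto w_0ww_0$, or equivalently the poset duality of Proposition~\ref{prop:Fstar-poset-duality}. This converts the condition ``$r-1\notin S$'' into ``$1\notin S$'' and descents into ascents. One then checks that the appended value $k+\tfrac12$ becomes a prepended value with the same count. After that normalization, the two identities follow term by term from Theorem~\ref{thm:gamma exp}.
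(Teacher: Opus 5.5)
Your computation of the flag $h$-vector is correct, and the overall route is essentially the paper's. The paper notes that $\beta_{\cL(\U_{r,n})}(S)=\mathrm{E}(n,S)$, because flag $f$-vectors agree with those of the Boolean lattice in ranks $<r$. It then cites an $R$-labeling argument for $\mathrm{E}(n,S)=\sum_{\sigma\in\mathfrak{S}_r,\ \Des(\sigma)=S}\binom{n-\sigma(r)}{r-\sigma(r)}$, and substitutes into \Cref{thm:gamma exp}. Your direct count on $\cL(\U_{r,n})$ yields exactly this identity once $w\,|\,k+\tfrac12$ is standardized to $\sigma\in\mathfrak{S}_r$ with $\sigma(r)=k+1$. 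So your version is, if anything, more self-contained.

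The step that fails as written is the final normalization.

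\begin{itemize}
\item The reverse-complement map $w\mapsto w_0ww_0$ sends $\Des(w)$ to $\{r-i: i\in\Des(w)\}$ and $\Asc(w)$ to $\{r-i: i\in\Asc(w)\}$. It therefore does \emph{not} exchange ascents and descents.
\item It also sends the last letter $\sigma(r)=k+1$ to a first letter $r-k$. The weight $\binom{n-1-k}{r-1-k}$ would then be attached to permutations starting with $r-k$ and counted by ascents, which is not $p_{r-1,k}^{T}(x)$.
\item Poset duality is not a substitute either. Proposition~\ref{prop:Fstar-poset-duality} concerns only $\F^*$, and $\cL(\U_{r,n})$ is not self-dual.
\end{itemize}

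The map you want is plain reversal, $\tau(i)=\sigma(r+1-i)$. Under it:
\begin{itemize}
\item $\tau(1)=\sigma(r)=k+1$;
\item $\Des(\tau)=\{r-i: i\in\Asc(\sigma)\}$, so stability is preserved and $\des(\tau)=|\Asc(\sigma)|$;
\item $r-1\notin\Asc(\sigma)$ if and only if $1\notin\Des(\tau)$.
\end{itemize}
This bijects the permutations in your inner sums with those defining $p_{r-1,k}^{[r-1]\setminus\{1\}}(x)$ for $\H^*$ and $p_{r-1,k}^{[r-1]}(x)$ for $\F^*$, preserving the statistic. Both identities then follow term by term.
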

\begin{proof}
    Following the notation and definitions in \cite{stanley-ec1}, the flag $f$-vector of the uniform matroid for $S\subseteq [r-1]$ is
    \(
        \alpha_{\cL(\U_{r,n})}(S) = \alpha_{\U_{n,n}}(S) \,,
    \)
    and thus 
    \[
        \beta_{\cL(\U_{r,n})}(S) = \beta_{\U_{n,n}}(S) = \mathrm{E}(n,S) \,.
    \]
    This proves the first identities of $\H^*_{\U_{r,n}}(x)$ and $\F^*_{\U_{r,n}}(x)$ respectively.
    For the second identities, observe that for $S\subseteq [r-1]$
    \[
        \mathrm{E}(n,S) = \sum_{\substack{\sigma\in \mathfrak{S}_r \\ \Des(\sigma) = S} }
        \binom{n-\sigma(r)}{r-\sigma(r)}\,.
    \]
    This follows, for instance, from an $R$-labeling argument that is discussed in detail in \cite[Section 3.2]{hoster}.
\end{proof}

Finally, we prove that both the dual Chow polynomial and the dual augmented Chow polynomial of a uniform matroid are real-rooted.
For the remainder of this section, we follow the notation and definition of \cite{hoster-stump}.
Let
\[
    p_{r-1,k}^{T}(x) = \sum_{\substack{w\in \mathfrak{S}_{r} \\ w(1) = k+1 \\ \Des(w) \subseteq T \\ \Des(w) \text{ stable} } } x^{\des(w)}\,,
\]
for $T\subseteq[r-1]$ (this notation appears with a different but related meaning in \cite[Section~3]{athanasiadis-douvropoulos-kalampogia}).
For a palindromic polynomial $f\in \mathbb{R}[x]$ of degree $d$ that has $\gamma$-expansion $f(x)=\sum_i \gamma_i \, x^i (1+x)^{d-2i}$, we define the associated $\gamma$-polynomial by $\gamma(f;x) = \sum_i \gamma_i x^i $\,.

\begin{corollary}
\label{cor:dual-chow-roots}
    The dual Chow polynomial $\H^*_{\U_{r,n}}(x)$ and the dual augmented Chow polynomial $\F^*_{\U_{r,n}}(x)$ of the uniform matroid $\U_{r,n}$ are real rooted.
\end{corollary}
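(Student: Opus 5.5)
Since both polynomials are palindromic with nonnegative $\gamma$-expansions (Theorem~\ref{thm:gamma exp}), it suffices to show that their $\gamma$-polynomials are real-rooted. Indeed, if $\gamma(f;x)$ is real-rooted with nonnegative coefficients, then all its roots are nonpositive. Then $f(x)=(1+x)^d\gamma\bigl(x/(1+x)^2\bigr)$, and every root $\theta\le 0$ of $\gamma$ gives a quadratic $x=\theta(1+x)^2$, which has real roots precisely because $\theta\le 0$. So $f$ is real-rooted. The whole problem therefore reduces to proving that the two $\gamma$-polynomials in Proposition~\ref{prop:gamma-uniform} are real-rooted.

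These $\gamma$-polynomials are nonnegative combinations $\sum_{k}\binom{n-1-k}{r-1-k}\,p^{T}_{r-1,k}(x)$, with $T=[r-1]\setminus\{1\}$ or $T=[r-1]$. The plan is to use the theory of interlacing sequences (compatible polynomials). A family $(p_0,\dots,p_{m})$ of real-rooted polynomials with nonnegative coefficients is called \emph{interlacing} if $p_i$ interlaces $p_j$ for all $i<j$. For such a family, every nonnegative combination $\sum c_k p_k$ is real-rooted. This holds for arbitrary nonnegative weights, so the coefficients $\binom{n-1-k}{r-1-k}$ need no special structure.

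The key step is therefore to show that, for fixed $r$ and $T$, the sequence $\bigl(p^{T}_{r-1,k}(x)\bigr)_{k=0}^{r-1}$ (refined by first letter $w(1)=k+1$) is interlacing. I would obtain this from the corresponding result in \cite{hoster-stump}, whose notation we follow. There these refined stable-descent polynomials are shown to satisfy a recursion of the form
\[
p^{T}_{m+1,k}(x)=\sum_{j<k}(\cdots)\,p^{T'}_{m,j}(x)+x\sum_{j\ge k}(\cdots)\,p^{T'}_{m-1,j}(x),
\]
obtained by deleting the first (and possibly second) letter of $w$. The stability of the descent set forces a descent at position $1$ to be followed by an ascent. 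Such recursions preserve interlacing families, by the standard lemma of Brändén and of Savage–Visontai on prefix and suffix sums with a factor $x$. If \cite{hoster-stump} states only the real-rootedness of the sums, I would reprove the interlacing by induction on $r$ via this recursion. The base cases are the small $r$, where each $p^{T}_{r-1,k}$ has degree at most $1$.

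The main obstacle is handling the two index sets $T$ uniformly. For $T=[r-1]\setminus\{1\}$, the constraint $1\notin\Des(w)$ means $w(1)<w(2)$, and this changes the recursion at the first step. I would treat this case by first removing $w(1)$, which reduces it to the unrestricted family $p^{[r-2]}_{r-2,j}$ with $j\ge k$, that is, to a suffix sum of an interlacing family. Suffix sums of an interlacing family are again interlacing, so the weighted combination remains real-rooted. Finally, I would check that all coefficients involved are nonnegative, which is immediate from the combinatorial definitions, and conclude via the $\gamma$-reduction described in the first paragraph.
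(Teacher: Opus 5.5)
Your proposal follows the paper's proof. You reduce, via the Gal/Br\"and\'en $\gamma$-criterion, to real-rootedness of the $\gamma$-polynomials in Proposition~\ref{prop:gamma-uniform}, and you get that from the interlacing of $\bigl(p^{T}_{r-1,k}\bigr)_k$ in \cite[Theorem~3.3]{hoster-stump} together with the fact that nonnegative combinations of an interlacing family are real-rooted \cite[Proposition~3.5]{wagner}. Your added details are correct: the quadratics $x=\theta(1+x)^2$, and the suffix-sum reduction $p^{[r-1]\setminus\{1\}}_{r-1,k}=\sum_{j\ge k}p^{[r-2]}_{r-2,j}$. Your fallback recursion, however, attaches the factor $x$ to the wrong range: a descent at position $1$ means $w(2)<w(1)$, i.e.\ $j<k$. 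Since you cite \cite{hoster-stump} for the interlacing, as the paper does, that recursion is not needed.
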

\begin{proof}
    Let $T=[r-1]$ or $T=[r-1]\setminus\{1\}$.
    By \cite[Theorem 3.3]{hoster-stump} together with well-known properties of real-rooted polynomials, see e.g. \cite[Proposition 3.5]{wagner},
    any nonnegative linear combinations of the polynomials $p_{r-1,0}^T,p_{r-1,1}^T,\dots, p_{r-1,r-1}^T$ is again real-rooted.
    Thus,
    the $\gamma$-polynomials of the dual Chow polynomial and of the dual augmented Chow polynomial are both real-rooted. 
    By \cite{gal,branden2}, the same statement then holds for $\H^*_{\U_{r,n}}(x)$ and $\F^*_{\U_{r,n}}(x)$.
\end{proof}

\begin{remark}[Dual Chow polynomials of simplicial posets with nonnegative $h$-vector]
    Let $P$ be a simplicial poset of rank $r$ with nonnegative $h$-vector
    $(h_0,\dots,h_{r-1})$, and let $\aug^*(P)$ be the top augmentation of $P$.
    The same argument used in the proof of Corollary~\ref{cor:dual-chow-roots}
    shows that the dual Chow polynomial and the dual augmented Chow polynomial
    of $\aug^*(P)$ are real-rooted, since their associated $\gamma$-polynomials are
    \[
        \gamma(\H^*_{\aug^*(P)};x)
        =
        \sum_{k=0}^{r-1}h_{r-1-k}\,
        p_{r-1,k}^{[r-1]\setminus\{1\}}(x)
    \]
    and
    \[
        \gamma(\F^*_{\aug^*(P)};x)
        =
        \sum_{k=0}^{r-1}h_{r-1-k}\,
        p_{r-1,k}^{[r-1]}(x).
    \]
    These are the same formulas as in \cite[Lemma~3.1]{hoster-stump} for
    $\chi$-Chow polynomials, but with $h$-vector entries reversed.
\end{remark}

\section{Final remarks}\label{sec:final-remarks}

\subsection{The geometric picture}

This section assumes that the reader is familiar with the standard terminology on the Hodge theory of matroids and polymatroids; see, for instance, \cite{adiprasito-huh-katz,semismall,pagaria-pezzoli}. When $P=\mathcal{L}(\M)$ is the lattice of flats of a loopless matroid $\M$, the Chow polynomial $\H_P(x)=\H_{\M}(x)$ can be interpreted as the Hilbert--Poincar\'e series of the Chow ring $\uCH(\M)$ of $\M$; see \cite{feichtner-yuzvinsky,semismall}. The ring $\uCH(\M)$ is a standard graded quotient of a polynomial ring with variables indexed by the non-empty proper flats of $\M$. It is natural to ask whether there is a comparable algebraic object whose graded dimensions are given by the dual Chow polynomial. One should not expect this object to be a standard graded algebra with one-dimensional degree-zero part: indeed, for a rank $r$ matroid,
\[
    [x^0]\H^*_{\M}(x)=(-1)^r\mu_{\M},
\]
which is the M\"obius invariant of $\M$, and is rarely equal to $1$.

The chain formula of Theorem~\ref{thm:iterative} suggests that the relevant object should be constructed from flags of flats and from top Orlik--Solomon spaces of the interval minors appearing along those flags. This is very close in spirit to the Leray model of Bibby, Denham, and Feichtner~\cite{bibbi-denham-feichtner}. More precisely, one should use the projective version of their Leray model, taken with respect to the maximal building set. This is a bigraded differential graded algebra whose bidegree $(\bullet,0)$ part is the Chow ring and whose other bidegrees contain Orlik--Solomon type information coming from local minors. For an interval $[F,G]$ in $\cL(\M)$, let
\(
    \operatorname{OS}(F,G):=\operatorname{OS}\bigl((\M|G)/F\bigr)
\)
be the Orlik--Solomon algebra of the interval minor, and let
\(
    \operatorname{OS}^{\mathrm{top}}(F,G)
\)
denote its top-degree part. Equivalently, in the projective Leray model one may use the top piece of the corresponding reduced Orlik--Solomon factor; these top pieces are canonically identified. In either language,
\[
    \dim \operatorname{OS}^{\mathrm{top}}(F,G)
    =
    (-1)^{\rk(G)-\rk(F)}\mu_{FG}.
\]

Thus one expects the dual Chow polynomial to be the Hilbert series of a ``local-top'' part of the projective Leray model: namely, the part obtained by retaining, along each flag of flats, only the top Orlik--Solomon piece of every local interval minor. At the level of graded vector spaces, the expected associated graded object would have the form
\[
    \bigoplus_{F_0\subsetneq \cdots\subsetneq F_m=E}
    \operatorname{OS}^{\mathrm{top}}(\varnothing,F_0)
    \otimes
    \bigotimes_{j=1}^{m}
    \left(
        \operatorname{OS}^{\mathrm{top}}(F_{j-1},F_j)
        \otimes
        x\Q[x]/(x^{\rk(F_j)-\rk(F_{j-1})})
    \right),
\]
where all Orlik--Solomon factors are treated as coefficient spaces in Chow degree $0$, and where $x\Q[x]/(x^d)$ has Hilbert series $x+x^2+\cdots+x^{d-1}$. Thanks to Theorem~\ref{thm:iterative}, taking Hilbert series in this expression gives exactly $\H^*_{\M}(x)$.
Indeed, the top Orlik--Solomon factors contribute the absolute values of the M\"obius numbers in the chain formula, while the truncated polynomial factors contribute the terms $x+\cdots+x^{d-1}$.

This discussion should be regarded as evidence for the existence of a natural \emph{dual Chow module}, rather than as a complete construction of such a module. The subtle point is the module structure. Although the projective Leray model is naturally a module over its Chow part, it is not immediate that the local-top piece described above is literally a submodule. The natural expectation is that, after passing to an appropriate flag associated graded or quotient, this local-top construction becomes a well-defined module over $\uCH(\M)$.

\begin{question}[Dual Chow module]\label{ques:dual-chow-module}
    Can the local-top part of the projective Leray model be made into a natural graded $\uCH(\M)$-module $\uCH^{\star}(\M)$ whose Hilbert series is $\H^*_{\M}(x)$?
\end{question}

In the realizable case, such a construction should be related to taking iterated top logarithmic residues along the boundary strata of the wonderful compactification. In the non-realizable case, the projective Leray model should provide the purely combinatorial replacement for this geometric picture.

Assuming that such a module $\uCH^{\star}(\M)$ can be constructed, one is led to the following Hodge-theoretic question. Since the expected dual Chow module would be a module rather than a ring, the correct analogue of the usual K\"ahler package should not be formulated in terms of a trace map on an algebra. Instead, one should ask whether it is a polarized Lefschetz module over the Chow ring.

\begin{question}[K\"ahler package for the dual Chow module]\label{op:kahler-dual-chow-module}
    Does there exist a natural dual Chow module for $\M$ admitting a Lefschetz package with respect to the action of $\uCH(\M)$?
\end{question}

More precisely, for every ample class $\ell\in\uCH^1(\M)$, should multiplication by $\ell$ be a Lefschetz operator on this module? That is, should there be isomorphisms
\[
    \ell^{\,r-1-2j}:
    \uCH^{\star,j}(\M)
    \xrightarrow{\ \sim\ }
    \uCH^{\star,r-1-j}(\M)
    \qquad
    \text{for all } j<\tfrac{\rk(\M)-1}{2}?
\]
Moreover, one may ask whether there is a natural nondegenerate pairing
\[
    \langle-, -\rangle_{\M}:
    \uCH^{\star,j}(\M)\times
    \uCH^{\star,r-1-j}(\M)
    \longrightarrow \Q
\]
with respect to which multiplication by $\ell$ is self-adjoint, and for which the corresponding Hodge--Riemann bilinear relations hold on the primitive subspaces.

There are several reasons to expect such a picture. First, the polynomial $\H^*_{\M}(x)$ is palindromic of degree $r-1$, so the numerical shadow of Poincar\'e duality is already present. Second, the deletion formula of Theorem~\ref{thm:dual-chow-deletion} expresses the dual Chow polynomial as a positive sum of products of smaller such polynomials, in a form reminiscent of semi-small decompositions. Third, the degree-zero part of the expected module should be the top Orlik--Solomon space of $\M$, whose dimension is the M\"obius invariant $(-1)^r\mu_{\M}$; by palindromicity, the same dimension appears in top degree. Thus the top Orlik--Solomon part appears to play, for the dual Chow polynomial, the role that the one-dimensional orientation space plays for the usual Chow ring.

One may also ask for an augmented counterpart of this story. The corresponding ``dual right augmented Chow module'' should be related to the augmented Chow ring $\CH(\M)$, and hence to the graded M\"obius algebra. This version may be the more natural setting for functoriality and for comparison with the singular Hodge theory of matroids of Braden, Huh, Matherne, Proudfoot, and Wang \cite{braden-huh-matherne-proudfoot-wang}.

\subsection{Real-rootedness for dual Chow polynomials of matroids}

A well-known conjecture posed by Huh--Stevens \cite[Conjecture~4.3.1]{stevens-bachelor} and, independently, Ferroni--Schr\"oter \cite[Conjecture~8.18]{ferroni-schroter} postulates that for every matroid $\M$ the Chow polynomial of $\mathcal{L}(\M)$, that is, the Hilbert--Poincar\'e series of the Chow ring of $\M$, has only nonpositive real zeros. There has been steady progress towards this conjecture in recent work, see \cite{branden-vecchi-uniform,branden-vecchi2,hoster-stump,coron-ferroni-li}.

We formulate the following analogue of that conjecture.

\begin{conjecture}
    For every matroid $\M$, the dual Chow polynomial of $\mathcal{L}(\M)$ has only nonpositive real zeros.
\end{conjecture}

We further conjecture that the same happens for the dual right augmented Chow polynomial of $\mathcal{L}(\M)$, and that its roots are interlaced by those of the dual Chow polynomial.

\begin{remark}
    In an early version of \cite{ferroni-matherne-vecchi} it was conjectured that real-rootedness of Chow polynomials might hold not just for geometric lattices, but also for the class of Cohen--Macaulay posets. A counterexample to that conjecture was found by the authors of \cite{ferroni-matherne-vecchi} in mid 2026 (see \cite[Figure~4]{ferroni-matherne-vecchi}). It is reasonable to ask for a counterexample to the real-rootedness of dual Chow polynomials of Cohen--Macaulay posets. In Figure~\ref{fig:cm-dual-chow-not-real-rooted} we depict one such poset, which is in fact an EL-shellable poset as can be checked by a brute force computer check. The dual Chow polynomial of this poset is
    \[4 x^{5} + 39 x^{4} + 120 x^{3} + 120 x^{2} + 39 x + 4
    \]
    which has four complex zeros near $-4.13\pm 0.35\, i$ and $-0.24\pm 0.02\, i$.
    \begin{figure}[ht]
\centering
\begin{tikzpicture}
[scale=0.68,auto=center,
 every node/.style={circle,scale=0.8,fill=black,inner sep=2.6pt}]
\tikzstyle{edges}=[thick];

\node[] (v0) at (0,0) {};

\node[] (v1) at (-3,1.6) {};
\node[] (v2) at (0,1.6) {};
\node[] (v3) at (3,1.6) {};

\node[] (v4) at (-4.5,3.2) {};
\node[] (v5) at (-1.5,3.2) {};
\node[] (v6) at (1.5,3.2) {};
\node[] (v7) at (4.5,3.2) {};

\node[] (v8)  at (-6,4.8) {};
\node[] (v9)  at (-3,4.8) {};
\node[] (v10) at (0,4.8) {};
\node[] (v11) at (3,4.8) {};
\node[] (v12) at (6,4.8) {};

\node[] (v13) at (-4.5,6.4) {};
\node[] (v14) at (-1.5,6.4) {};
\node[] (v15) at (1.5,6.4) {};
\node[] (v16) at (4.5,6.4) {};

\node[] (v17) at (-3,8) {};
\node[] (v18) at (0,8) {};
\node[] (v19) at (3,8) {};

\node[] (v20) at (0,9.6) {};

\draw[edges] (v0) -- (v1);
\draw[edges] (v0) -- (v2);
\draw[edges] (v0) -- (v3);

\draw[edges] (v1) -- (v4);
\draw[edges] (v1) -- (v5);
\draw[edges] (v1) -- (v7);

\draw[edges] (v2) -- (v4);
\draw[edges] (v2) -- (v5);
\draw[edges] (v2) -- (v7);

\draw[edges] (v3) -- (v5);
\draw[edges] (v3) -- (v6);
\draw[edges] (v3) -- (v7);

\draw[edges] (v4) -- (v8);
\draw[edges] (v4) -- (v9);
\draw[edges] (v4) -- (v10);
\draw[edges] (v4) -- (v11);
\draw[edges] (v4) -- (v12);

\draw[edges] (v5) -- (v8);
\draw[edges] (v5) -- (v9);
\draw[edges] (v5) -- (v11);
\draw[edges] (v5) -- (v12);

\draw[edges] (v6) -- (v9);
\draw[edges] (v6) -- (v10);
\draw[edges] (v6) -- (v11);

\draw[edges] (v7) -- (v9);
\draw[edges] (v7) -- (v10);
\draw[edges] (v7) -- (v11);
\draw[edges] (v7) -- (v12);

\draw[edges] (v8) -- (v13);
\draw[edges] (v8) -- (v14);
\draw[edges] (v8) -- (v15);

\draw[edges] (v9) -- (v13);
\draw[edges] (v9) -- (v14);
\draw[edges] (v9) -- (v15);
\draw[edges] (v9) -- (v16);

\draw[edges] (v10) -- (v13);
\draw[edges] (v10) -- (v14);
\draw[edges] (v10) -- (v16);

\draw[edges] (v11) -- (v13);
\draw[edges] (v11) -- (v14);

\draw[edges] (v12) -- (v13);
\draw[edges] (v12) -- (v14);
\draw[edges] (v12) -- (v15);

\draw[edges] (v13) -- (v18);
\draw[edges] (v13) -- (v19);

\draw[edges] (v14) -- (v19);

\draw[edges] (v15) -- (v18);
\draw[edges] (v15) -- (v19);

\draw[edges] (v16) -- (v17);
\draw[edges] (v16) -- (v19);

\draw[edges] (v17) -- (v20);
\draw[edges] (v18) -- (v20);
\draw[edges] (v19) -- (v20);

\end{tikzpicture}
\caption{An EL-shellable (thus Cohen--Macaulay) poset whose dual Chow polynomial is not real-rooted.}
\label{fig:cm-dual-chow-not-real-rooted}
\end{figure}
\end{remark}

\subsection{Relationship with the inverse KL polynomial of a matroid}

In work of Gao and Xie \cite{gao-xie} and Gao, Ruan, and Xie \cite{gao-ruan-xie} the \emph{inverse Kazhdan--Lusztig polynomial} $Q_{\M}(x)$ and the \emph{inverse $Z$-polynomial} $Y_{\M}(x)$ of a matroid $\M$ are thoroughly studied. Using the notation of our Section~\ref{sec:preliminaries}, and thanks to Proposition~\ref{prop:kls-and-zeta-of-rev-kernel}, one has:
    \[ Q_{\M}(x) := g^*_{\mathcal{L}(\M)}(x)\qquad \text{ and } \qquad Y_{\M}(x) = Z^*_{\mathcal{L}(\M)}(x).\]

The dual Chow polynomial $\H^*_{\M}(x)$, and its right augmented counterpart $\F^*_{\M}(x)$ can be expressed in terms of the polynomials displayed above, thanks to the numerical canonical decompositions proved by Ferroni, Matherne, and Vecchi in \cite{ferroni-matherne-vecchi}.

Braden, Ferroni, Matherne, and Nepal \cite{braden-ferroni-matherne-nepal} found deletion formulas for $Q_{\M}(x)$ and $Y_{\M}(x)$ which substantially differ from the deletion formulas for $\H^*_{\M}(x)$ and $\F^*_{\M}(x)$ proved in Theorem~\ref{thm:dual-chow-deletion} (specifically, where we see the sets $\underline{\mathscr{S}}_i$, they have a set called $\mathscr{T}_i$).

At the moment we lack a satisfactory explanation of this difference. Furthermore, the methodology used in the proofs in \cite{ferroni-matherne-vecchi} could potentially yield a proof of an alternative deletion formula for $\H^*_{\M}(x)$ and $\F^*_{\M}(x)$ or, at least, a substantially different proof of the ones described in Section~\ref{sec:matroids}.

\subsection{Beyond deletion formulas in matroid theory}

Aside from the Chow polynomial, the left augmented Chow polynomial, and the two inverse Kazhdan--Lusztig type polynomials $Q_{\M}(x)$ and $Y_{\M}(x)$ discussed above, many other polynomial invariants in matroid theory admit deletion formulas. Examples include the Tutte polynomial, the Kazhdan--Lusztig polynomial and the $Z$-polynomial \cite{braden-vysogorets}, as well as the polynomial studied in \cite[Theorem~A.1]{berget-spink-tseng}.

The deletion formulas for the extended $\aaa\bbb$-index obtained in Theorem~\ref{thm:extended-ab-deletion} also specialize to natural identities outside the scope of Chow polynomials. We record one such specialization, concerning the $h$-polynomial of the Bergman complex.

Recall that if $\M$ is a loopless matroid, its Bergman complex $\Delta(\widehat{\cL}(\M))$
is the order complex of the proper part $\widehat{\cL}(\M):=\cL(\M)\setminus\{\widehat{0},\widehat{1}\}$.
We denote its $h$-polynomial by $h_{\M}(x)$. Equivalently, if $\Psi_{\cL(\M)}(\aaa,\bbb)$ denotes the ordinary $\aaa\bbb$-index of the lattice of flats, then
\[
    h_{\M}(x)=\Psi_{\cL(\M)}(1,x).
\]

\begin{theorem}\label{thm:bergman-h-polynomial-deletion}
    Let $\M$ be a loopless matroid on $E$, and let $i\in E$ be an element that is not a coloop. Then
    \[
        h_{\M}(x)
        =
        h_{\M\setminus i}(x)
        +
        x\sum_{F\in\mathscr{S}_i}
        h_{\M|F}(x)\,
        h_{\M/(F\cup\{i\})}(x).
    \]
\end{theorem}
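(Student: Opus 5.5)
The plan is to obtain the identity as the specialization $\aaa=1$, $\bbb=x$ of the deletion formula for the $\aaa\bbb$-index in Theorem~\ref{thm:ab-deletion}, using $h_{\M}(x)=\Psi_{\cL(\M)}(1,x)$. That theorem assumes that $i$ has no parallel elements. We therefore split into two cases: $i$ has a parallel element, or it does not.

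\emph{Case 1: $i$ has a parallel element $j$.} First we show that $\mathscr{S}_i=\emptyset$. Suppose $F\in\mathscr{S}_i$. Since $F\cup\{i\}$ is a flat, it contains $j\in\operatorname{cl}(\{i\})$. As $j\neq i$, this forces $j\in F$, and then $i\in\operatorname{cl}(\{j\})\subseteq F$, which contradicts $i\notin F$. So the sum on the right-hand side is empty. Next, the map $F\mapsto F\setminus\{i\}$ is an order isomorphism $\cL(\M)\to\cL(\M\setminus i)$, because every flat containing $j$ contains $i$ and vice versa. Its rank is preserved since $i$ is not a coloop. Hence $h_{\M}=h_{\M\setminus i}$ and the formula holds. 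This case also covers rank $1$: there $\M$ is loopless and $i$ is not a coloop, so $i$ has a parallel element.

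\emph{Case 2: $i$ has no parallel elements.} Here $\{i\}$ is a flat, because $\M$ is loopless, so $\widehat{0}=\emptyset$ and $\widehat{0}\cup\{i\}$ is a rank-one flat. All hypotheses of Theorem~\ref{thm:ab-deletion} then hold, and by Case 1 we may assume $r\geq 2$. Substituting $\aaa=1$, $\bbb=x$ gives
\[
    h_{\M}(x)=h_{\M\setminus i}(x)+x\,h_{\M/i}(x)+x\sum_{F\in\underline{\mathscr{S}}_i}h_{\M|F}(x)\,h_{\M/(F\cup\{i\})}(x).
\]
Two points need checking in this substitution:
\begin{enumerate}[(i)]
    \item Each minor appearing is an $\aaa\bbb$-index of a lattice of flats, so the specialization $\Psi(1,x)$ is exactly its $h$-polynomial. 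This includes lattices of rank $0$ or $1$, where $\Psi=1$ by the empty-word convention. The minors $\M/(F\cup\{i\})$ are loopless because $F\cup\{i\}$ is a flat.
    \item The term $x\,h_{\M/i}(x)$ is precisely the $F=\widehat{0}$ summand of the full sum over $\mathscr{S}_i$. Indeed $\widehat{0}\in\mathscr{S}_i$ since $\{i\}$ is a flat, $\M|\emptyset$ has rank $0$ so $h_{\M|\emptyset}=1$, and $\M/(\emptyset\cup\{i\})=\M/i$.
\end{enumerate}
Absorbing $x\,h_{\M/i}(x)$ into the sum turns the sum over $\underline{\mathscr{S}}_i$ into the sum over $\mathscr{S}_i$, which is the stated formula.

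The main obstacle is bookkeeping in these boundary cases rather than any conceptual step. One must make sure the rank-$0$ and rank-$1$ conventions for $\Psi$ agree with $h$-polynomials of empty or point Bergman complexes, namely $h=1$. The proof of Theorem~\ref{thm:ab-deletion} also silently used $r\geq2$. I would verify directly that the chain-counting argument there, with the position $k$ of the label $i$ ranging over $1\le k\le r-1$, covers all maximal chains when $r=2$. The parallel-element reduction in Case 1 guarantees that no other degenerate situation arises.
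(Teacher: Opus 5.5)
Your proposal is correct and is essentially the paper's proof: specialize Theorem~\ref{thm:ab-deletion} at $\aaa=1$, $\bbb=x$, then absorb the term $x\,h_{\M/i}(x)$ as the $F=\widehat{0}$ summand of the sum over $\mathscr{S}_i$. Your Case~1 is a worthwhile addition, since the paper's proof invokes Theorem~\ref{thm:ab-deletion} even though the no-parallel-elements hypothesis is absent from this statement; your observation that $\mathscr{S}_i=\emptyset$ and $\cL(\M)\cong\cL(\M\setminus i)$ when $i$ has a parallel element closes that gap.
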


\begin{proof}
    We specialize the identity in Theorem~\ref{thm:ab-deletion} at $\aaa=1$ and $\bbb=x$.
    Since
    \(
        \Psi_{\cL(\N)}(1,x)
        =
        h_{\N}(x)
    \)
    for every minor $\N$ of $\M$,
    we have
    \[
        h_{\M}(x)
        = h_{\M\setminus i} 
            + x h_{\M/i} 
            + \sum_{F\in \underline{\mathscr{S}}_i } 
                h_{\M|F}\, x\, h_{\M/(F\cup\{i\})} \,.
    \]
    After minor term orderings, we obtain the claimed identity.
\end{proof}

This is strikingly close to the semi-small decomposition for the Chow polynomial $\H_{\M}(x)$ (see \cite[Corollary~3.24]{ferroni-matherne-vecchi}), with the main difference that the Bergman $h$-polynomial has the additional contraction term indexed by $F=\varnothing$. Moreover, both the polynomials $h_{\M}(x)$ and $\H_{\M}(x)$ take the same values at the base cases: for Boolean matroids they both correspond to the Eulerian polynomial of degree $\rk(\M)-1$. It is somewhat surprising that such a close parallel seems to have gone unnoticed, especially in light of a conjecture of Athanasiadis and Kalampogia-Evangelinou \cite{athanasiadis-kalampogia}, which predicts that $h_{\M}(x)$ has only nonpositive real zeros for every matroid $\M$.

\bibliographystyle{amsalpha}
\bibliography{bibliography}

\end{document}